\let\counterwithout\relax
\title{Homotopy theory with marked additive categories}
\date{\today}
\author{
Ulrich Bunke\thanks{Fakult{\"a}t f{\"u}r Mathematik,
Universit{\"a}t Regensburg,
93040 Regensburg,
Germany\newline
ulrich.bunke@mathematik.uni-regensburg.de} 
\and Alexander Engel\thanks{Fakult{\"a}t f{\"u}r Mathematik,
Universit{\"a}t Regensburg,
93040 Regensburg,
Germany\newline
alexander.engel@mathematik.uni-regensburg.de
	}
\and
Daniel Kasprowski\thanks{
	Rheinische Friedrich-Wilhelms-Universit\"at Bonn, Mathematisches Institut, Endenicher Allee 60,\newline 53115 Bonn, Germany\newline
	kasprowski@uni-bonn.de
}
\and
Christoph Winges\thanks{
		Rheinische Friedrich-Wilhelms-Universit\"at Bonn, Mathematisches Institut, Endenicher Allee 60,\newline 53115 Bonn, Germany\newline
	winges@math.uni-bonn.de}
}
\numberwithin{equation}{section}
\newtheorem{theorem}{Theorem}[section] 
\newtheorem{prop}[theorem]{Proposition}
\newtheorem{lem}[theorem]{Lemma}
\newtheorem{kor}[theorem]{Corollary}
\theoremstyle{remark}
\theoremstyle{definition}
\newtheorem{ddd-alt}[theorem]{Definition}
\newtheorem{ex-alt}[theorem]{Example}
\newtheorem{rem-alt}[theorem]{Remark}
\newenvironment{ddd}    
{%
	\pushQED{\qed}\begin{ddd-alt}}
	{\popQED\end{ddd-alt}}
\newenvironment{ex}    
{%
	\pushQED{\qed}\begin{ex-alt}}
	{\popQED\end{ex-alt}}
\newenvironment{rem}    
{%
	\pushQED{\qed}\begin{rem-alt}}
	{\popQED\end{rem-alt}}
\crefname{lem}{Lemma}{Lemmas}
\crefname{ddd-alt}{Definition}{Definitions}
\newcommand{\cNerve}{\mathrm{N}^{\mathrm{coh}}}
\newcommand{\free}{\mathrm{free}}
 \newcommand{\Add}{\mathbf{Add}}
\newcommand{\All}{\mathbf{All}}
\DeclareMathOperator{\Res}{Res}
\newcommand{\Orb}{\mathbf{Orb}}
\newcommand{\BC}{\mathbf{BornCoarse}}
\newcommand{\bB}{{\mathbf{B}}}
\newcommand{\Groupoids}{\mathbf{Groupoids}}
\newcommand{\bF}{{\mathbf{F}}}
\newcommand{\cW}{{\mathcal{W}}}
\newcommand{\bA}{{\mathbf{A}}}
\newcommand{\const}{{\mathtt{const}}}
\newcommand{\cU}{{\mathcal{U}}}
 \newcommand{\Cat}{{\mathbf{Cat}}}
\DeclareMathOperator{\proj}{Proj}
\newcommand{\preAdd}{\mathbf{preAdd}}
\newcommand{\mi}{\mathrm{mi}}
\newcommand{\ma}{\mathrm{ma}}
\newcommand{\bbI}{\mathbb{I}}
\newcommand{\Lin}{\mathrm{Lin}}
\newcommand{\Free}{\mathrm{Free}}
\newcommand{\Quivers}{\mathbf{DirGraph}}
\newcommand{\Banach}{\mathbf{Ban}}
\newcommand{\fg}{\mathrm{fg}}
\newcommand{\can}{\mathrm{can}}
\newcommand{\idem}{\mathrm{idem}}
\renewcommand{\free}{\mathrm{free}}
\renewcommand{\proj}{\mathrm{proj}}
\renewcommand{\End}{\mathrm{End}}
\renewcommand{\Hom}{\mathrm{Hom}}
\begin{document}
\maketitle

\begin{abstract}
We construct combinatorial model category structures on the categories of   (marked) categories and  (marked) pre-additive categories, and we characterize  (marked) additive categories as fibrant objects in a Bousfield localization of pre-additive categories. These model category structures  are used to present the corresponding  $\infty$-categories obtained by inverting equivalences. We apply these results to explicitly  calculate   limits and colimits in these $\infty$-categories. The motivating application is a systematic construction of the equivariant coarse algebraic $K$-homology with coefficients in an additive category from its non-equivariant version.
\end{abstract}

\tableofcontents

\section{Introduction}

If $\cC$ is a category and $W$ is a set of morphisms in $\cC$, then one can consider the localization functor
\[ \ell_{\cC} \colon \cC\to\cC_{\infty}:=\cC[W^{-1}] \]
in $\infty$-categories  \cite[Def.~1.3.4.1]{HA} \cite[Def.~7.1.2]{cisin}), where we consider $\cC$ as an $\infty$-category  given by its nerve (which we will omit in the notation).   If the relative category $(\cC,W)$ extends to a simplicial model category
in which all
objects are cofibrant, then we have   an equivalence of $\infty$-categories
\[ \cC_{\infty}\simeq   \cNerve(\cC^{cf})\ ,\]
where the right-hand side is the nerve of the simplicial category of cofibrant/fibrant objects of $\cC$ \cite[Def.~1.3.4.15 \& Thm.~1.3.4.20]{HA}.   This explicit description of $\cC_{\infty}$   is sometimes very helpful in order to calculate mapping spaces in $\cC_{\infty}$ or to identify limits or colimits of diagrams in $\cC_{\infty}$.
 
In the present paper we consider the case where $\cC$ belongs to the list
\[ \{\Cat,\Cat^{+},\preAdd,\preAdd^{+}\} \]
where $\Cat^{(+)}$ is the category of small  (marked)   categories (\cref{erboi33w23f234f2f}), and $\preAdd^{(+)}$ is the category of small (marked) 
 pre-additive categories (\cref{fbioerwwef32r23r23r,rbgeoirgergergergre2r4}), and $W$ are the 
 (marking preserving)  morphisms (functors or $\Ab$-enrichment preserving functors, respectively) which admit inverses up to (marked) isomorphisms (\cref{gijoorgergerg}). 

In order to fix set-theoretic issues we choose three Grothendieck universes \begin{equation}\label{rtboihgiuf4f43f34f3f3}
\cU\subset  \cV\subset  \cW\ .
\end{equation} 
The objects of $\cC$ are categories in $\cV$ which are locally $\cU$-small, while $\cC$ itself belongs to $\cW$ and is locally $\cV$-small. We will shortly say that the objects of $\cC$ are small (as already done above), and correspondingly, that $\cC$ itself is large.  
 
Our first main theorem is:
\begin{theorem}\label{ergioergergre34}
 The pair $(\cC,W)$ extends to a combinatorial, simplicial model category structure.
\end{theorem}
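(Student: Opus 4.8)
\emph{Proof strategy.} The plan is to verify, in each of the four cases, the hypotheses of J.~Smith's recognition theorem for combinatorial model categories (see \cite{cisin}). In every case $W$ is the prescribed class of weak equivalences, the cofibrations will be the (enrichment and marking preserving) functors that are injective on objects, and the fibrations the maps with the right lifting property against the set $J$ described below. The first input is that each of $\Cat$, $\Cat^{+}$, $\preAdd$, $\preAdd^{+}$ is locally presentable with respect to the ambient universes: each is the category of models of a limit sketch, the finite-limit data encoding composition and units, together with extra finite-product data for the hom-groups in the pre-additive cases and a distinguished subobject of the morphism object in the marked cases.

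The second step is to write down finite generating sets. Write $[0]$ for the terminal small category (one object with only its identity; its endomorphism ring is $\IZ$ in the pre-additive cases), $[1]$ for the arrow category, $\mathbb{I}$ for the free-living isomorphism, $P$ for the free category on a parallel pair of arrows, and $[1]^{\mathrm{mk}}$ for $[1]$ with its non-identity morphism marked; in the pre-additive cases one uses the $\IZ$-linearized analogues. A routine lifting computation shows that for $\Cat$ one may take $I=\{\emptyset\to[0],\ [0]\sqcup[0]\to[1],\ P\to[1]\}$, whose right-lifting maps are precisely the functors that are surjective on objects, full and faithful, and $J=\{[0]\to\mathbb{I}\}$, whose right-lifting maps are precisely the isofibrations. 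For $\preAdd$ one replaces $[1]$ and $P\to[1]$ by the maps $[0]\sqcup[0]\to\IZ[1]$ (forcing fullness) and $\IZ[1]\to[0]\sqcup[0]$ sending the generator to zero (forcing faithfulness); for the marked variants one adjoins the cofibration $[1]\to[1]^{\mathrm{mk}}$, whose right-lifting maps are the functors reflecting the marking, and replaces $\mathbb{I}$ in $J$ by its marked analogue. In all four cases the $I$-injective maps are the ``(marked) trivial fibrations'', the $J$-injective maps the ``(marked) isofibrations'', and solving a lifting problem on objects using surjectivity and injectivity, and on morphisms using full faithfulness, identifies the class of $I$-cofibrations with the maps injective on objects.

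To apply the recognition theorem I then check: that $W$ satisfies two-out-of-three and is closed under retracts (formal); that $W$ is an accessible, accessibly embedded subcategory of the arrow category (``(marked) equivalence'' being the accessible conjunction of essential surjectivity, full faithfulness, and the accessible conditions on the marking, and such maps being stable under filtered colimits of arrows); that every $I$-injective map is a weak equivalence and a $J$-injective map (a (marked) trivial fibration is a (marked) equivalence and an isofibration --- immediate); that, conversely, a $J$-injective map in $W$ is $I$-injective --- the essential elementary lemma, proved by lifting a representing marked isomorphism through the isofibration to see that it is surjective on objects, hence also full, faithful and marking-reflecting; and that $J\text{-}\mathrm{cof}\subseteq W\cap I\text{-}\mathrm{cof}$. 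For the latter one needs that pushouts of the maps in $J$ and transfinite composites of injective-on-objects (marked) equivalences are again (marked) equivalences; I would package this through the structure statement that \emph{every} injective-on-objects (marked) equivalence is, up to isomorphism, a single pushout of a coproduct of copies of the map in $J$ --- obtained by freely adjoining, for each object of the target not in the image, a (marked) isomorphism to a suitable object of the source. Granting these points, Smith's theorem produces a combinatorial model structure on $\Cat$, $\Cat^{+}$, $\preAdd$, resp.\ $\preAdd^{+}$, with weak equivalences $W$, cofibrations $I\text{-}\mathrm{cof}$, fibrations $J\text{-}\mathrm{inj}$, generated by the finite sets $I$ and $J$; in particular all objects are fibrant and cofibrant and the trivial cofibrations are exactly $J\text{-}\mathrm{cof}$. (Alternatively the two pre-additive cases can be obtained from the non-additive ones by transfer along the free $\mathbf{Ab}$-enrichment adjunction, whose right adjoint preserves all colimits.)

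Finally, I install the simplicial structure. Each $\cC$ is enriched over $\Groupoids$ with its canonical model structure: the mapping groupoid of $\cC$ and $\cD$ is the groupoid of (marked, $\mathbf{Ab}$-enriched) functors and (marked) natural isomorphisms, the tensor is $\cC\otimes\mathcal{G}=\cC\times\mathcal{G}$ --- replaced by $\cC\otimes_{\IZ}\IZ[\mathcal{G}]$ in the pre-additive cases --- and the cotensor is the evident one. The pushout-product axiom over $\Groupoids$ reduces on objects to stability of injections of sets under pushout, and its acyclic case follows by two-out-of-three from the structure statement above together with the fact that $\mathcal{G}\times(-)$, resp.\ $\IZ[\mathcal{G}]\otimes_{\IZ}(-)$, preserves equivalences. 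Since the nerve $N\colon\Groupoids\to\mathbf{sSet}$ is a lax monoidal right Quillen functor whose left adjoint $\Pi_{1}$, the fundamental groupoid functor, is strong monoidal (it preserves finite products), base change of the enrichment along the monoidal Quillen adjunction $\Pi_{1}\dashv N$ upgrades each $\Groupoids$-model structure to a simplicial one, with mapping space the nerve of the mapping groupoid --- a Kan complex, being the nerve of a groupoid. The main obstacle is not a single deep idea but the careful transport of this whole package into the marked and pre-additive settings: fixing the correct notions of (marked) isofibration and trivial fibration, verifying the structure statement with the markings and the $\mathbf{Ab}$-linear structure intact, and establishing the accessibility of $W$.
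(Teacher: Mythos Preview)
Your approach via Smith's recognition theorem is a genuinely different route from the paper's, and it is essentially sound. The paper proceeds by a direct hands-on verification of the model category axioms (M1)--(M7): completeness/cocompleteness, two-out-of-three, retracts, the lifting axioms, and most notably explicit factorizations built from mapping-cylinder and path-object constructions; cofibrant generation and local presentability are then checked separately at the end. You instead front-load local presentability and the generating sets $I,J$, verify the accessibility of $W$ and the compatibility conditions, and let the small object argument supply the factorizations. What your approach buys is economy and a cleaner identification of the trivial cofibrations via your ``structure statement'' (every injective-on-objects marked equivalence is a pushout of copies of the map in $J$); what the paper's approach buys is explicit cylinder and path objects, which are used later in the applications. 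Your treatment of the simplicial structure (enrich over $\Groupoids$, then base-change along the monoidal Quillen adjunction $\Pi_1 \dashv \Nerve$) is also a clean repackaging of the paper's direct pushout-product verification.

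Two points need repair. First, the object you call $[1]^{\mathrm{mk}}$ does not exist in $\Cat^{+}$ as the paper defines it: a marking is a wide sub\emph{groupoid}, so only isomorphisms can be marked, and the non-identity morphism of $[1]$ is not invertible. What you want is the map $\bbI_{\cC^{+}} \to \bbI^{+}_{\cC^{+}}$ from the unmarked to the marked isomorphism classifier (in the paper's notation); lifting against this is precisely ``reflecting the marking'', and together with fullness and faithfulness this is equivalent to the paper's conditions using $V^{+}$ and $W^{+}$. Second, your parenthetical alternative---transferring the pre-additive model structures from the non-additive ones along $\Lin_{\Z} \dashv \cF_{\Z}$---is fine in spirit, but the justification you give is wrong: the right adjoint $\cF_{\Z}$ does \emph{not} preserve all colimits (coproducts in $\preAdd$ have zero morphisms between the summands, which are not present in $\Cat$). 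The transfer goes through for the usual reasons (acyclicity of $J$-cell complexes, which your structure statement in fact supplies), not because $\cF_{\Z}$ is cocontinuous.
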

We refer to \cref{vgioeoerberebg} for a more precise formulation and recall that the adjective
\emph{combinatorial} means cofibrantly generated as a model category, and locally presentable as a category.  In this model category structure all objects of $\cC$ are cofibrant.

The assertion of \cref{ergioergergre34} in the case of $\Cat$  and $\preAdd$ is well-known or folklore.
In the proof, which closely follows the standard line of arguments, we therefore put the emphasis on checking that 
all arguments work in the marked cases as well.

In order to describe the homotopy theory of (marked) additive categories,
we show the following.

\begin{prop}\label{bhergerger}
	There exists a Bousfield localization $L\preAdd^{(+)}$ of $\preAdd^{(+)}$ whose  fibrant objects  are the marked (additive) categories.
\end{prop}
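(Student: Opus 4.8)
The plan is to realize $L\preAdd^{(+)}$ as the left Bousfield localization of the model category from \cref{ergioergergre34} at a finite set $S$ of morphisms that forces finite biproducts to exist. As a preliminary I would record that this model structure is left proper, because all its objects are cofibrant and any model category in which every object is cofibrant is left proper. Since it is moreover combinatorial, for every set $S$ of morphisms there is a left Bousfield localization $L_S\preAdd^{(+)}$ with the same cofibrations and with the $S$-local objects as its fibrant objects; here a marked pre-additive category $\mathcal{A}$ is $S$-local precisely when it is fibrant in $\preAdd^{(+)}$ and the restriction $\sigma^{\ast}\colon\mathbf{R}\!\operatorname{Map}_{\preAdd^{(+)}}(Y,\mathcal{A})\to\mathbf{R}\!\operatorname{Map}_{\preAdd^{(+)}}(X,\mathcal{A})$ is a weak equivalence of spaces for every $\sigma\colon X\to Y$ in $S$.

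For $S$ I would take the two ``walking biproduct'' inclusions. Recall that a pre-additive category is additive exactly when it has a zero object and all binary biproducts, and that an $\Ab$-enriched binary biproduct of $a_1,a_2$ is the datum of morphisms $i_k\colon a_k\to b$ and $p_k\colon b\to a_k$ ($k=1,2$) with $p_ki_l=\delta_{kl}\,\mathrm{id}$ and $i_1p_1+i_2p_2=\mathrm{id}_b$. Accordingly, set $S=\{\sigma_0,\sigma_2\}$, where $\sigma_0\colon\emptyset\hookrightarrow T_0$ includes the empty marked pre-additive category into the terminal one $T_0$ (a single object with zero endomorphism ring, the ``walking zero object''), and $\sigma_2\colon F_2\hookrightarrow B_2$ is the inclusion of the free marked pre-additive category $F_2$ on two objects $x_1,x_2$ (so $\End(x_k)=\IZ$ and $\Hom(x_1,x_2)=0=\Hom(x_2,x_1)$, with trivial marking) into the marked pre-additive category $B_2$ obtained from $F_2$ by freely adjoining a third object $y$ together with morphisms $i_k,p_k$ subject to the biproduct relations above (the ``walking binary biproduct'').

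It then remains to identify the $S$-local objects with the marked additive categories. I would compute, using the simplicial enrichment of \cref{ergioergergre34} and the (automatic) cofibrancy of $\emptyset$, $T_0$, $F_2$, $B_2$, that for $\mathcal{A}$ fibrant in $\preAdd^{(+)}$ the relevant derived mapping spaces are nerves of the evident groupoids: $\mathbf{R}\!\operatorname{Map}(\emptyset,\mathcal{A})\simeq\ast$, while $\mathbf{R}\!\operatorname{Map}(T_0,\mathcal{A})$, $\mathbf{R}\!\operatorname{Map}(F_2,\mathcal{A})$ and $\mathbf{R}\!\operatorname{Map}(B_2,\mathcal{A})$ are the groupoids of, respectively, zero objects of $\mathcal{A}$, ordered pairs of objects of $\mathcal{A}$, and binary biproduct diagrams in $\mathcal{A}$, each with marked isomorphisms as morphisms. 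Since a zero object, respectively a binary biproduct diagram extending a fixed pair, is unique up to a unique isomorphism whenever it exists, every fibre of $\sigma_0^{\ast}$ and of $\sigma_2^{\ast}$ is empty or contractible; hence these maps are weak equivalences exactly when $\mathcal{A}$ has a zero object, respectively all binary biproducts. Thus $\mathcal{A}$ is $S$-local if and only if it is fibrant in $\preAdd^{(+)}$ and its underlying pre-additive category is additive, which---in view of the description of the fibrant objects of $\preAdd^{(+)}$ and the definition of a marked additive category---is exactly the assertion that $\mathcal{A}$ is a marked additive category. Setting $L\preAdd^{(+)}:=L_S\preAdd^{(+)}$ then completes the proof.

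The hard part will be the mapping-space computation in the \emph{marked} setting. One has to verify that, for the simplicial structure of \cref{ergioergergre34}, the derived mapping space out of each cofibrant generator $\mathcal{B}\in\{\emptyset,T_0,F_2,B_2\}$ is genuinely the nerve of the groupoid of marking-preserving $\Ab$-enriched functors $\mathcal{B}\to\mathcal{A}$ and marked natural isomorphisms between them, and that the contractibility of the spaces of zero objects and of binary biproduct diagrams over a fixed pair is not destroyed once markings are taken into account. This is also where one must fix the markings on $T_0$, $F_2$, $B_2$ correctly and confirm, using the precise definition of a marked additive category, that no further localization maps (for instance constraining markings) are needed. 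Granting this, the proposition is a formal consequence of the existence theorem for left Bousfield localizations of left proper combinatorial model categories.
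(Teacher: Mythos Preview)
Your approach is exactly the paper's: localize at the inclusion of two objects into the walking biproduct and at $\emptyset\to T_0$, then identify the local objects. The paper calls these morphisms $w$ and $v$ and carries out the same mapping-space analysis (as a lifting problem in degrees $n\le 2$, using that the mapping spaces are nerves of groupoids).

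There is, however, a genuine gap in your argument in the marked case. You claim that $\sigma_2^{\ast}$ is a weak equivalence precisely when $\mathcal{A}$ has all binary biproducts, and then that ``fibrant in $\preAdd^{+}$ plus underlying category additive'' equals ``marked additive''. Both steps are wrong. Every object of $\preAdd^{+}$ is fibrant, so fibrancy imposes nothing; and a marked pre-additive category whose underlying category is additive need \emph{not} be marked additive, since Definition~\ref{reiuheriververvec} additionally requires that the marked subgroupoid be closed under sums. This extra condition is exactly what full faithfulness of $\sigma_2^{\ast}$ detects: given marked isomorphisms $f_1\colon a_1\to a_1'$ and $f_2\colon a_2\to a_2'$, the unique compatible isomorphism between chosen biproducts is $f_1\oplus f_2$, and it is a morphism in the groupoid $\Fun^{+}_{\preAdd^{+}}(B_2,\mathcal{A})^{+}$ only if it is \emph{marked}. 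So $\sigma_2^{\ast}$ is a weak equivalence iff $\mathcal{A}$ has biproducts \emph{and} satisfies the sum-closure condition on markings. The paper makes precisely this point in the $n=1$ step of its lifting analysis. Your final paragraph correctly senses that the marked case hides something, but the body of the argument needs to be rewritten so that the marking condition emerges from the locality analysis rather than being smuggled in via a false identification at the end.
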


We refer to  \cref{rigerogergergre} for a more precise statement.
Let  $W_{\Add^{(+)}}$ denote the weak equivalences in $L\preAdd^{(+)}$.
\cref{bhergerger} then implies that we have an equivalence of $\infty$-categories
\begin{equation}\label{g5g45ggg3ff3f}
 \Add^{(+)}_\infty := \preAdd^{(+)}[W_{\Add^{(+)}}^{-1}] \simeq \cNerve(\Add^{(+)})\ ,
\end{equation}
where $\Add^{(+)}$ denotes the category of small (marked) additive categories (see \cref{rioehgjoifgregregegergeg,reiuheriververvec}).
For example, this allows us to calculate limits in $\Add^+_\infty$,
which is one of the motivating applications of the present paper (see \cref{ex:bc}).

Since in general an $\infty$-category modeled by a combintorial model category is presentable, we get the following (see \cref{fiowefwefwfwf}).
\begin{kor}
The $\infty$-categories in the list
 \[ \{\Cat_{\infty},\Cat^{+}_{\infty},\preAdd_{\infty},\preAdd^{+}_{\infty}, \Add_{\infty},\Add^{+}_{\infty}\} \]
are presentable.
 \end{kor}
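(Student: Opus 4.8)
The plan is to deduce presentability by combining \cref{ergioergergre34} and \cref{bhergerger} with the standard fact that the $\infty$-category underlying a combinatorial model category is presentable (see e.g.\ \cite[Prop.~A.3.7.6]{HTT}).

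For the first four entries $\Cat_\infty$, $\Cat_\infty^+$, $\preAdd_\infty$, $\preAdd_\infty^+$ the argument is immediate: by \cref{ergioergergre34}, for $\cC$ in the list $\{\Cat,\Cat^+,\preAdd,\preAdd^+\}$ the relative category $(\cC,W)$ underlies a combinatorial simplicial model category, and by the discussion in the introduction (using \cite[Def.~1.3.4.15 \& Thm.~1.3.4.20]{HA}) one has $\cC_\infty = \cC[W^{-1}] \simeq \cNerve(\cC^{cf})$. Since this model category is combinatorial, $\cC_\infty$ is presentable by the cited result. No marked-specific input enters beyond \cref{ergioergergre34} itself.

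For $\Add_\infty$ and $\Add_\infty^+$ I would use \cref{bhergerger} in the precise form of \cref{rigerogergergre}. By \eqref{g5g45ggg3ff3f}, $\Add_\infty^{(+)}$ is the $\infty$-category underlying the left Bousfield localization $L\preAdd^{(+)}$ of $\preAdd^{(+)}$. Now $\preAdd^{(+)}$ is combinatorial and left proper (all objects being cofibrant by \cref{ergioergergre34}), and the localization is performed at a set of morphisms; hence, by the general theory of left Bousfield localizations of left proper combinatorial model categories, $L\preAdd^{(+)}$ is again a left proper, combinatorial, simplicial model category --- this is part of what \cref{rigerogergergre} records. Applying the general fact once more shows that $\Add_\infty^{(+)}$ is presentable.

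The only step that genuinely requires care is the verification that the Bousfield localization in \cref{bhergerger} is taken at a \emph{set} of morphisms rather than a proper class, since this is what keeps $L\preAdd^{(+)}$ combinatorial; this is built into \cref{rigerogergergre}. Apart from that, the proof is a formal combination of \cref{ergioergergre34}, \cref{bhergerger}, and off-the-shelf facts about (localizations of) combinatorial model categories, and I do not expect any further obstacle.
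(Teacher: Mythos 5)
Your argument is correct and is essentially the paper's own proof: the first four $\infty$-categories are presented by the combinatorial model categories of \cref{ergioergergre34}, the last two by the left Bousfield localization of $\preAdd^{(+)}$ at the finite set $\{v,w\}$ of cofibrations (which is again combinatorial by \cite[Prop.~A.3.7.3]{htt}), and presentability then follows from \cite[Prop.~1.3.4.22]{HA}. The only cosmetic difference is the citation you give for the final step.
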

Presentability is a  very useful property  if one wants to show the existence of adjoint functors. For example the inclusion
$\cF_{\oplus} \colon \Add_{\infty}\to \preAdd_{\infty}$ preserves limits (by inspection) and therefore has a left-adjoint, the
additive completion functor
\[ L_{\oplus} \colon \preAdd_{\infty}\to \Add_{\infty} \]
(see \cref{fiowefwefwfwf}).

We demonstrate the utility of the model category structures, whose existence is asserted in \cref{ergioergergre34}, in a variety of examples.
  
  \begin{enumerate}
  \item In \cref{prop:2nerv}, we use relation \eqref{g5g45ggg3ff3f}  in order to show  an equivalence of $\infty$-categories
  \[ \Add_{\infty}\simeq  \Nerve_{2}(\Add_{(2,1)})\ ,\]
  where the right-hand side is  the $2$-categorical  nerve of the strict two-category  of small  additive 
  categories. This is used in \cite{coarsetrans} to extend $K$-theory functors from $\Add$ to $\Nerve_{2}(\Add_{(2,1)})$.  
      \item In \cref{efweoifoewfewfewf3r323r2r} we verify that the localization functor $\ell_{\cC} \colon \cC\to \cC_{\infty}$ preserves arbitrary products, where $\cC$ belongs to the list
  \[ \{ \Cat,\Cat^{+},\preAdd_{\infty},\preAdd^{+}_{\infty}, \Add_{\infty},\Add^{+}_{\infty}\}\ ,\]
  see \cref{wefiojewwefewf43t546466}.
 \item In  \cref{rgiuerhgweergergergeg} we consider additive categories of modules over rings. For example, we show in \cref{gueiurgrgerger} that
 \[ L_{\oplus}(\ell_{\preAdd}(\bR))\simeq \ell_{\Add} (\Mod^{\fg ,\free}(R) )\ ,\]
 i.e.\ that the additive completion of a ring (considered as an object $\ell_{\preAdd}(\bR)$ in $\preAdd_{\infty}$)  is equivalent to the additive category of its finitely generated and free modules (considered in $\Add_{\infty}$).
  We also discuss idempotent completions and its relation with the  additive category of finitely generated  projective modules along the same lines, see \cref{vgirejgoiergergergergregergergerg}.
  \item The main result in \cref{erbgkioergergergegreg}, see \cref{weoijoijvu9bewewfewfwef}, is an explicit formula for the object \[ \colim_{BG} \ell_{\preAdd^{(+)},BG}(\underline{\bA}) \]
  in $\preAdd^{(+)}$, where $\underline{\bA}$ is a (marked) pre-additive category with trivial  action of a group $G$ and   $\ell_{\preAdd^{(+)},BG}$ is induced from $\ell_{\preAdd^{(+)}}$.
    \item In \cref{gijeriogjeroigergregeg} we consider $\cC$ in $  \{\preAdd_{\infty},\preAdd^{+}_{\infty}, \Add_{\infty},\Add^{+}_{\infty}\}$.  In \cref{rgier9oger}, we provide an explicit formula for the object
    \[ \lim_{BG} \ell_{\cC,BG} (\bA) \ ,\]
    where $\bA$ is an object of $\cC$ with an action of $G$.  
    \end{enumerate}
 
In a parallel paper \cite{bunke} we consider model categoy structures on (marked) $*$-categories and a similar application to coarse homology theories including equivariant coarse topological $K$-homology.

\paragraph{Acknowledgements}
U.~Bunke and A.~Engel were supported by the SFB 1085 ``Higher Invariants''
funded by the Deutsche Forschungsgemeinschaft DFG. C.~Winges acknowledges support by the Max Planck Society and by Wolfgang L\"uck's ERC Advanced Grant ``KL2MG-interactions" (no.~662400). D.~Kasprowski and C.~Winges are members of the Hausdorff Center for Mathematics at the University of Bonn.

\section{Marked categories}\label{rgierogrg43rergerg4t3}

\subsection{Categories of marked categories and  marked  pre-additive  categories}
In this section we introduce  categories of marked categories, marked pre-additive categories and additive categories. We further describe various relations between these categories given by forgetful functors and their adjoints. We finally describe their enrichments in groupoids and simplicial sets.

Let $\bC$ be a category.
\begin{ddd}
A \emph{marking on $\bC$} is the choice of a wide 
{subgroupoid} $\bC^{+}$ of the underlying groupoid of $\bC$. 	
\end{ddd}

 \begin{ex}In this example, we name the two extreme cases of markings.
On the one hand, we can consider the minimal marking $\bC^{+}_{min}$ given by the identity morphisms of $\bC$. On the other hand, we have the maximal marking $\bC^{+}_{max}$ given by the underlying groupoid of $\bC$.
 \end{ex}

\begin{ddd}\label{erboi33w23f234f2f}A \emph{marked category} is  a pair $(\bC,\bC^{+})$ of a category and a marking.
A morphism between marked categories $(\bC,\bC^{+})\to (\bD,\bD^{+})$ is a functor
$\bC\to \bD$ which sends $\bC^{+}$ to $\bD^{+}$.
\end{ddd}

We let $\Cat^{+}$ denote the category of marked small categories and morphisms between marked categories. We have two functors
\begin{equation}
\label{eq_functor_Fplus}
\cF_{+} \colon \Cat^{+}\to \Cat\ , \quad  (\bC,\bC^{+})\mapsto \bC
\end{equation}
and
\[ (-)^{+} \colon \Cat^{+}\to \Groupoids\ , \quad (\bC,\bC^{+})\mapsto \bC^{+}\ .\]
The functor $\cF_{+}$ (which forgets the markings) fits into adjunctions
\[ \mi \colon \Cat\leftrightarrows \Cat^{+} \colon \cF_{+}\ , \quad \cF_{+} \colon \Cat^{+}\leftrightarrows\Cat \colon \ma\ ,\]
where the functors $\mi$ (mark identities) and $\ma$ (mark all isomorphisms) are given (on objects) by  
\[ \mi(\bC):=(\bC,\bC^{+}_{min}) \ ,\quad \ma(\bC):=(\bC,\bC^{+}_{max})\ , \]
and  their definition on morphisms as well as the unit and counit of the adjunctions are  the obvious ones.

\begin{ddd} \label{fbioerwwef32r23r23r}
A \emph{pre-additive category} is a category which is enriched over the category of abelian groups. 
  A \emph{morphism} between pre-additive categories is a functor which is compatible with the enrichment.
\end{ddd}

  We let $\preAdd$ denote the category of small pre-additive categories and functors which are compatible with the enrichment.

 The   forgetful functor
 (forgetting the enrichment)   is the right-adjoint of an adjunction
\begin{equation}
\label{eq_functor_FZ}
\Lin_{\Z} \colon \Cat\leftrightarrows\preAdd \colon \cF_{\Z}
\end{equation}
 whose left-adjoint is called the linearization functor.
 For a pre-additive category $\bA$ we call $\cF_{\Z}(\bA)$ the underlying category.
 
  \begin{rem} \label{vgeroihirovervbervevev}Let 
 $\bA$ be a pre-additive category. If $A$ and $B$ are two objects of $\bA$ such that the product $A\times B$ and the coproduct $A\sqcup B$ exist, then the canonical morphism
$ A\sqcup B\to A\times B$  induced by the maps $(\id_{A},0) \colon A\to A\times B$ and $(0,\id_{B}) \colon B\to A\times B$ is an isomorphism. In this case we call the  product or coproduct also the sum of $A$ and $B$ and use the notation $A\oplus B$.
\end{rem}

 \begin{ddd} \label{rbgeoirgergergergre2r4}
 We define the \emph{category   of marked  pre-additive categories}  $\preAdd^{+}$
 as the pull-back  (in   $1$-categories)
 \[\xymatrix{ \preAdd^{+}\ar[r]\ar[d]&\Cat^{+}\ar[d]^-{\cF_{+}}\\
  \preAdd\ar[r]^-{\cF_{\Z}}&\Cat}\]
{with the functors $\cF_{+}$ and $\cF_{\Z}$ from \eqref{eq_functor_Fplus} and \eqref{eq_functor_FZ}.}
 \end{ddd}
 Thus a marked pre-additive category is a pair $(\bA,\bA^{+})$ of a pre-additive category $\bA$ and {a wide subgroupoid} $\bA^{+}$ of the underlying groupoid of $\bA$, and a morphism of marked  pre-additive categories $(\bA,\bA^{+})\to (\bB,\bB^{+})$ is a functor $\bA\to \bB$ which is compatible with the enrichment and sends $\bA^{+}$ to $\bB^{+}$.

  We will denote the vertical arrow   forgetting the markings, i.e., taking the underlying pre-additive category, also by $\cF_{+}$. We have adjunctions
 \begin{equation}\label{f3rfkj34nfkjf3f3f3f43f}
\mi \colon \preAdd\leftrightarrows \preAdd^{+} \colon \cF_{+}\ , \quad \cF_{+} \colon \preAdd^{+}\leftrightarrows \preAdd \colon \ma\ ,
\end{equation}
 and
 \[ \Lin_{\Z} \colon \Cat^{+}\leftrightarrows  \preAdd^{+} \colon \cF_{\Z}\ .\] The unit of the last adjunction provides an inclusion of categories
   $\bC \to \cF_{\Z}(\Lin_{\Z}( \bC))$, and the subcategory of marked isomorphisms in 
   $\Lin_{\Z}(\bC )$ is exactly the image of $\bC^{+}$ under this inclusion.

\begin{rem}
 	Note that a sum of two addable  marked isomorphisms in a marked pre-additive category need not be marked. So in general the  subcategory of marked isomorphisms  of a marked pre-additive category is not  pre-additive.
\end{rem}

 From now one we will usually shorten the notation and denote marked categories just by one symbol $\bC$ instead of $(\bC,\bC^{+})$.

The categories $\Cat$, $ \Cat^{+}$ $\preAdd$ and  $\preAdd^{+}$  are enriched over themselves. For categories $\bA$ and $\bB$ we let $\Fun_{\Cat}(\bA,\bB)$ in $\Cat$ denote the category of functors from $\bA$ to $\bB$ and natural transformations.  
Assume now that $\bA$ and $\bB$ are marked. Then we can consider the
functor category $\Fun_{\Cat^{+}}(\bA,\bB)$ in $\Cat$ of functors preserving the marked subcategories and  natural transformations.

\begin{ddd} \label{gwiogefwerfwefwefewfw}
We define the \emph{marked} functor category  $\Fun_{\Cat^+}^{+}(\bA, \bB)$  in $\Cat^{+}$  
by marking those natural transformations $(u_{a})_{a\in \bA}$ of $\Fun_{\Cat^+}(\bA,\bB)$   for which $u_{a}$ is a marked isomorphism for  every $a$ in $\bA$.
\end{ddd}

Similarly, assume that $\bA$ and $\bB$ are pre-additive categories.
Then
the category of (enrichment preserving) functors $\Fun_{\preAdd}(\bA,\bB)$  and natural transformations is itself  naturally enriched in abelian groups,
and hence is an object of $\preAdd$.  If $\bA$ and $\bB$ are marked pre-additive categories, then the same applies to the category   $\Fun_{\preAdd^{+}}(\bA,\bB)$ of functors preserving the enrichment and the marked subcategories.

\begin{ddd} \label{gwiogefwerfwefwefewfwadd}
We define the \emph{marked} functor category  $\Fun^{+}_{\preAdd^{+}}(\bA, \bB)$  in $\preAdd^{+}$  
by marking those natural transformations $(u_{a})_{a\in \bA}$ of $\Fun_{\preAdd^{+}}(\bA,\bB)$   for which $u_{a}$ is marked for every $a$ in $\bA$.
\end{ddd}

\begin{rem} This is a remark about notation.
For $\cC=\Cat$ or $\cC=\preAdd$ and $\bA,\bB$ in $\cC^{+}$ we can consider the functor category
$\Fun_{\cC^{+}}(\bA,\bB)$ in $\cC$. The $+$-sign  indicates that we only consider functors which preserve marked isomorphisms. In general we have a full inclusion of categories
$\Fun_{\cC^{+}}(\bA,\bB)\subseteq \Fun_{\cC}(\cF_{+}(\bA),\cF_{+}(\bB))$.
The upper index $+$ in $\Fun^{+}_{\cC^{+}}(\bA,\bB)$  indicates that we consider
the functor category as a marked category, i.e., as an object of $\cC^{+}$. The symbol $\Fun^{+}_{\cC^{+}}(\bA,\bB)^{+}$ denotes the subcategory of marked isomorphisms. In our longer pair notation  for marked objects we thus have
\[\Fun^{+}_{\cC^{+}}(\bA,\bB)=(\Fun_{\cC^{+}}(\bA,\bB),\Fun^{+}_{\cC^{+}}(\bA,\bB)^{+})\ .\qedhere\]
\end{rem}

We now introduce enrichments of the categories over simplicial sets using the nerve functor
\[ \Nerve \colon \Cat\to \sSet\ .\]

\begin{rem} The usual enrichment of $\Cat$ over  simplicial sets is given by setting
\[ \Map^{\mathrm{standard}}_{\Cat}(\bA,\bB) :=\Nerve(\Fun_{\Cat}(\bA,\bB))\ .\]
In the present paper we will consider a different enrichment which only takes the invertible natural transformations between functors into account. 
\end{rem}

For the rest of this section $\cC$ serves as a placeholder for either $\Cat$ or $\preAdd$.

We start with marked categories  $\bA$ and $\bB$ in $\cC^{+}$.
\begin{ddd}\label{ergeiorge4tgergregergreg}
We define
\[\Map_{\cC^{+}}(\bA,\bB):=\Nerve(\Fun^{+}_{\cC^{+}}(\bA,\bB)^{+})\ .\qedhere\]
\end{ddd}
 In other words, $\Map_{\cC^{+}}(\bA,\bB)$ is the nerve of the groupoid of marked isomorphisms in $\Fun^{+}_{\cC^{+}}(\bA,\bB)$. 
 
Let now  $\bA$ and $\bB$ be  categories in $\cC$
\begin{ddd}
We define
\[\Map_{\cC}(\bA,\bB):=\Nerve(\Fun^{+}_{\cC^{+}}(\ma(\bA),\ma(\bB))^{+})\ .\qedhere\]
\end{ddd}

In other words, $\Map_{\cC}(\bA,\bB)$ is the nerve of the groupoid of  isomorphisms  in $\Fun_{\cC}(\bA,\bB)$. 

The composition of functors and natural transformations naturally induces the composition law for these mapping spaces. In this way we have turned the categories $\Cat$, $\Cat^{+}$, $\preAdd$ and $\preAdd^{+}$ into simplicially enriched categories.

\begin{rem}
Since the mapping spaces are nerves of groupoids they are Kan complexes. Therefore these simplicial categories are fibrant in Bergner's model structure on simplicial categories \cite{bergner}. \end{rem}

 \subsection{The model categories \texorpdfstring{$\preAdd^{+}$}{preAdd-plus} and \texorpdfstring{$\Cat^{+}$}{Cat-plus}}
\label{sec:marked}

 In this section we describe the model category structures on the categories $\Cat$, $\Cat^{+}$, $\preAdd$ and $\preAdd^{+}$, see  \cref{gijoorgergerg}.
The main  result is \cref{vgioeoerberebg}. 

As before, $\cC$ serves as a placeholder for either $\Cat$ or $\preAdd$.
We  first introduce the data for the   model category structure on $\cC$ or $\cC^{+}$.

\begin{ddd}\label{gijoorgergerg}\mbox{}\begin{enumerate} \item
		A morphism $f \colon \bA\to \bB$ in $\cC$ (or $\cC^{+}$) is a weak equivalence if it admits an inverse  $g \colon \bB\to \bA$ up to  isomorphisms (or marked isomorphisms).
		\item
		A morphism in $\cC $  (or $\cC^{+}$) is called a cofibration if it is injective on objects.
		\item
		A morphism in $\cC $ (or $\cC^{+}$) is called a fibration, if it has the right-lifting property for  trivial cofibrations.  \qedhere\end{enumerate}
\end{ddd}

The following is the main theorem of the present section.

\begin{theorem}\label{vgioeoerberebg}
	The simplicial category $\cC$ (or $\cC^{+}$) with the weak equivalences, cofibrations and fibrations as in \cref{gijoorgergerg}     is a simplicial 
	and combinatorial model category. \end{theorem}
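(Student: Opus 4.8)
The plan is to verify the axioms of a simplicial combinatorial model category directly, reducing as much as possible to the well-known case of $\Cat$ with its natural (``folk'') model structure and exploiting the adjunctions $\mi \dashv \cF_{+} \dashv \ma$ and $\Lin_{\Z} \dashv \cF_{\Z}$ to transport structure. Since all four categories $\Cat$, $\Cat^{+}$, $\preAdd$, $\preAdd^{+}$ are categories of models of an essentially algebraic theory, they are locally presentable; thus the combinatorial part of the statement will follow once we exhibit generating sets of cofibrations and trivial cofibrations. For the model structure itself, rather than checking the five axioms by hand, I would construct it by transfer (Kan's recognition theorem / Crans' lifting theorem): one transfers the folk model structure along one of the adjunctions $\cF_{+}$ or $\cF_{\Z}$, or along their composite, from $\Cat$ to $\cC^{(+)}$. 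Concretely, the right adjoint in each case is $\cF_{+}$ (resp.\ $\cF_{\Z}$, resp.\ their composite), the weak equivalences and fibrations in $\cC^{(+)}$ are declared to be the morphisms whose image under this right adjoint is a weak equivalence resp.\ fibration in $\Cat$, and one checks that this agrees with \cref{gijoorgergerg}: a functor is injective on objects iff its underlying functor is, so cofibrations are preserved and detected, and the characterization of fibrations by RLP against trivial cofibrations is then automatic.

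The steps, in order, would be: (1) recall explicit generating (trivial) cofibrations for the folk structure on $\Cat$ — the boundary inclusion $\emptyset \to \{*\}$, the inclusion of the two-object discrete category into the free-arrow category $\{0\to 1\}$, the map $\{0\to 1\} \to \{0\rightrightarrows 1\}$ collapsing to a point, and the inclusion $\{*\} \to \{0 \cong 1\}$ of the point into the free-isomorphism category, all of which are cofibrations between small objects; (2) push these forward along the left adjoint ($\mi$, resp.\ $\Lin_{\Z}$, resp.\ the composite) to obtain candidate generating sets in $\cC^{(+)}$, adding in the marked cases the map that turns an isomorphism into a marked isomorphism (the inclusion of the point into the free-marked-isomorphism category) so that markings are correctly controlled; (3) verify the hypotheses of the transfer theorem: local presentability of $\cC^{(+)}$ (done via essentially algebraic theories), the small object argument (automatic from local presentability), and the crucial ``acyclicity'' condition that cofibrations built from the pushed-forward generating trivial cofibrations become weak equivalences — for this one uses that each such generating map is a trivial cofibration and that pushouts and transfinite composites of weak equivalences that are injective on objects are again weak equivalences, a fact one proves by exhibiting the homotopy inverses and the natural (marked) isomorphisms explicitly, using that in all four categories a functor is a weak equivalence iff it is fully faithful and (``essentially surjective in the marked sense'') hits every object up to a marked isomorphism; (4) identify the resulting fibrations and check they coincide with \cref{gijoorgergerg}(3), which is immediate from the definition of a transferred structure together with step (2); (5) establish the simplicial structure: the mapping spaces of \cref{ergeiorge4tgergregergreg} are nerves of groupoids, hence Kan complexes, and one must produce tensor and cotensor functors — tensoring with a simplicial set $K$ is given by the functor category construction $\Fun^{(+)}(\mathfrak{C}[K], -)$ or, more concretely, by the groupoidification $\mathrm{Ex}^\infty K \mapsto \tau_{\leq 1}$ of $K$ applied levelwise — and then verify the pushout-product (SM7) axiom, which because every object is cofibrant and mapping spaces are Kan reduces to a compatibility between the groupoid-of-isomorphisms functor and products.

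The main obstacle I expect is step (3) together with (5) in the \emph{marked pre-additive} case $\preAdd^{+}$: here one must simultaneously control the $\Ab$-enrichment and the marking, and the interaction is genuinely delicate because, as the paper's own remark stresses, a sum of two addable marked isomorphisms need not be marked, so the marked subcategory of a marked pre-additive category is typically not pre-additive. Consequently the pushed-forward generating trivial cofibrations involve freely adjoining a marked isomorphism \emph{and} freely adjoining the abelian-group structure it generates, and one has to check carefully that pushing out along these maps does not inadvertently force unwanted morphisms into the marking or destroy fully-faithfulness; the verification of SM7 inherits the same subtlety, since the cotensor $\bB^{K}$ must carry exactly the right marking. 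The strategy to handle this is to treat $\preAdd^{+}$ via the pullback square of \cref{rbgeoirgergergergre2r4}, transferring from $\Cat^{+}$ and $\preAdd$ simultaneously (i.e.\ noting that a morphism in $\preAdd^{+}$ is a weak equivalence, cofibration, or fibration iff both its images in $\Cat^{+}$ and in $\preAdd$ are), so that each individual check is reduced to the two already-established cases, and the marked/enriched compatibilities are packaged into the statement that this pullback of model categories is again a model category with the evident classes — which is the one genuinely new lemma the proof needs.
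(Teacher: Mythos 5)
Your overall scaffolding (local presentability via algebraic theories, explicit generating sets, nerves of groupoids as Kan mapping spaces) is reasonable, but the central mechanism of your proof --- right transfer of the folk model structure along $\cF_{+}$ (resp.\ $\cF_{\Z}$, resp.\ their composite) --- does not produce the model structure of \cref{gijoorgergerg} in the marked cases, which are exactly the cases where the theorem is not already folklore. In a right-transferred structure the weak equivalences are \emph{by definition} the morphisms whose image under the right adjoint is a weak equivalence; for $\cF_{+}\colon \Cat^{+}\to\Cat$ this class is strictly larger than the class of marked equivalences. Concretely, the unique functor $p\colon \mi(\bbI_{\Cat})\to \Delta^{0}_{\Cat^{+}}$ has $\cF_{+}(p)$ an equivalence of categories, but $p$ is not a weak equivalence in $\Cat^{+}$: any candidate inverse picks one object of $\mi(\bbI_{\Cat})$, and the required natural isomorphism to the identity would have to be a \emph{marked} isomorphism $0\to 1$, which does not exist since only identities are marked (equivalently, $p^{+}$ is not an equivalence of groupoids, so \cref{lem:markedequivs} fails). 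The same phenomenon breaks the identification of fibrations: a functor can be an isofibration on underlying categories without lifting marked isomorphisms to marked isomorphisms. So your consistency check ``one checks that this agrees with \cref{gijoorgergerg}'' fails, and adding the extra generating trivial cofibration $\Delta^{0}_{\Cat^{+}}\to\ma(\bbI_{\Cat})$ does not repair this, since in a model category the trivial cofibrations are determined by the weak equivalences and cofibrations rather than freely prescribable. A transfer-style argument could be salvaged by transferring along the \emph{pair} $(\cF_{\All},(-)^{+})\colon \cC^{+}\to \Cat\times\Groupoids$ (whose left adjoint exists, and which creates precisely the weak equivalences of \cref{lem:markedequivs}), but that is not what you propose and would still require verifying the acyclicity condition by hand.

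Two further points. First, your fallback for $\preAdd^{+}$ --- that a ``pullback of model categories'' along the square of \cref{rbgeoirgergergergre2r4} is again a model category with the evident classes --- is not a standard fact (limits of model categories along right Quillen functors generally fail to be model categories), and you flag it as the one new lemma without indicating a proof; as stated it carries essentially the whole weight of the marked pre-additive case. Second, on the simplicial structure, the tensor for $\preAdd^{(+)}$ must be the $\Ab$-enriched product $\bA\otimes Q(\Pi(K))$ rather than the cartesian one, a point your sketch elides. The paper avoids all of these issues by verifying the axioms directly: explicit (co)limits (\cref{wrfwweffewf}), marked isofibrations as the fibrations (\cref{wfeiweiofewfewfewf}), factorizations via mapping cylinders and path objects built from the tensoring over groupoids (\cref{cor:factorization1,cor:factorization2}), explicit finite generating sets, and local presentability via free (marked) categories on (marked) directed graphs.
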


\begin{proof}
We refer to \cite[Def.~1.1.3 and Def.~1.1.4]{hovey} or \cite[Def.~7.1.3]{MR1944041} for the axioms (M1)-(M5) for a model category and \cite[Def.~9.1.6]{MR1944041} for the additional axioms  {(M6) and (M7)} for  a simplicial model category.
For the Definition of cofibrant generation we refer to \cite[Def.~2.1.17]{hovey} or \cite[Def.~11.1.2]{MR1944041}.
Finally, a model category is called combinatorial if it is cofibrantly generated and locally presentable \cite{Dugger:aa},
 \cite[Def.~A.2.6.1]{htt}.

 \begin{enumerate}
\item In  \cref{wrfwweffewf} we verify completeness and cocompleteness (M1).  	 
		\item Weak equivalences have the two-out-of-three property (M2) by \cref{fwiowowfefwefwef334}.
		\item Weak equivalences, cofibrations and fibrations are closed under retracts (M3) by \cref{wfeoifjowefwefewfw}.
		\item Lifting along trivial cofibrations holds by definition. Lifting along trivial fibrations (M4) holds by \cref{fweiowefwefewffewf}.
		\item Existence of factorizations (M5) follows from  {\cref{cor:factorization1} and \cref{cor:factorization2}}. 
		\item Simplicial enrichment (M6) is shown by   \cref{efiuwehfiwefew23r23r32r},  and the pushout-product axiom (M7)   is verified in  \cref{foifjoewfefwefwef}.
		\item The category is cofibrantly generated by \cref{fiowejofwefewfewf}.
		\item It is locally presentable by \cref{ewdfoijfowefewfewfw}.\qedhere
	\end{enumerate}
\end{proof}

\begin{rem}
The case of $\Cat$ is well-known. 
In the following, in order to avoid case distinctions, we will only consider the marked case in full detail.
In fact, the functor $\ma \colon \cC\to \cC^{+}$ is the inclusion of a full simplicial subcategory and
the model category structure is inherited. We will indicate the necessary modifications
(e.g, list the generating (trivial) cofibrations or the generators of the category  in the unmarked case) in remarks at the appropriate places. 
\end{rem}

Completeness and cocompleteness in the following means  admitting limits and colimits with indexing categories in the universe $\cU$, see \eqref{rtboihgiuf4f43f34f3f3}.

\begin{prop}\label{wrfwweffewf}
	The category $\cC^{+}$ is complete and cocomplete.
\end{prop}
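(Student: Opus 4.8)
The plan is to establish completeness and cocompleteness of $\cC^{+}$ (where $\cC$ is $\Cat$ or $\preAdd$) by reducing to the known completeness and cocompleteness of $\cC$ itself, together with that of $\Groupoids$, using the fact that $\cC^{+}$ is built as a pullback. First I would recall that $\Cat$ and $\preAdd$ are both complete and cocomplete (with $\cU$-indexed limits and colimits) — for $\Cat$ this is classical, and for $\preAdd$ limits are computed objectwise on hom-groups with the underlying category a limit in $\Cat$, while colimits exist because $\preAdd$ is the category of models of an essentially algebraic theory (equivalently, $\Lin_{\Z} \colon \Cat \leftrightarrows \preAdd \colon \cF_{\Z}$ is a reflective-type adjunction that lets one build colimits). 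Similarly $\Groupoids$ is complete and cocomplete.

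Next I would treat the marked case. A marked category in $\cC^{+}$ is a pair $(\bA, \bA^{+})$ where $\bA^{+}$ is a wide subgroupoid of the underlying groupoid of $\bA$; equivalently, by \cref{rbgeoirgergergergre2r4}, $\cC^{+}$ is the pullback of $\cC \xrightarrow{(\text{underlying groupoid})} \Groupoids \xleftarrow{\text{incl}} \Groupoids^{\subseteq}$, or more directly the description after that definition: an object is $\bA$ in $\cC$ together with a wide subgroupoid $\bA^{+}$. For limits: given a diagram $i \mapsto (\bA_i, \bA_i^{+})$, form $\bA := \lim_i \bA_i$ in $\cC$, and take $\bA^{+}$ to be the subgroupoid of the underlying groupoid of $\bA$ consisting of those isomorphisms all of whose legs $\bA \to \bA_i$ land in $\bA_i^{+}$; equivalently $\bA^{+} = \lim_i \bA_i^{+}$ computed in $\Groupoids$, which maps to the underlying groupoid of $\bA$ since the forgetful functor $\cC \to \Groupoids$ preserves limits and the $\bA_i^{+} \hookrightarrow (\text{underlying groupoid of } \bA_i)$ are wide (so the limit subgroupoid is still wide). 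One then checks this pair has the universal property in $\cC^{+}$, which is formal from the pullback description.

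For colimits the argument is the mirror image but slightly more delicate: form $\bA := \colim_i \bA_i$ in $\cC$, and let $\bA^{+}$ be the wide subgroupoid of the underlying groupoid of $\bA$ generated by the images of the $\bA_i^{+}$ under the structure functors $\bA_i \to \bA$. One must verify that a morphism out of $(\bA, \bA^{+})$ to some $(\bB, \bB^{+})$ corresponds exactly to a compatible cocone of marked functors $\bA_i \to \bB$: a functor $\bA \to \bB$ underlying such a cocone automatically sends each image of $\bA_i^{+}$ into $\bB^{+}$, hence sends the generated wide subgroupoid $\bA^{+}$ into $\bB^{+}$ because $\bB^{+}$ is itself a subgroupoid (closed under composition and inverse). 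The main subtlety to check is precisely that "generated wide subgroupoid" is the correct object — i.e.\ that it is the smallest wide subgroupoid through which all the $\bA_i^{+} \to (\text{underlying groupoid of }\bA)$ factor, which is exactly the colimit of the $\bA_i^{+}$ computed in $\Groupoids$ mapped forward; this works because the underlying-groupoid functor is a right adjoint on $\cC$ only on the isomorphism-level, so one argues directly with subgroupoids rather than invoking adjointness. The routine verifications of the universal properties I would leave to the reader, noting only that everything is $\cU$-indexed because the underlying limits and colimits in $\cC$ and $\Groupoids$ are. I expect the only real obstacle is bookkeeping the wide-subgroupoid condition through colimits; there are no deep issues.
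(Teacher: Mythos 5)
Your proposal is correct and follows essentially the same route as the paper: both construct limits by first taking the limit in $\cC$ and marking those isomorphisms whose components are all marked, and construct colimits by taking the colimit in $\cC$ and marking the subgroupoid generated by the images of the marked isomorphisms, then verifying the universal properties directly. The extra remarks about identifying the marking with a limit or colimit of the $\bA_i^{+}$ in $\Groupoids$ are consistent elaborations rather than a different argument.
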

\begin{proof}
	We will deduce the marked case from the unmarked one and use as a known fact  that $ \cC$  is complete and cocomplete, see \cite[Prop.~5.1.7]{Borceux} for cocompleteness for $\cC=\Cat$.
	
	Let  $I$ be a   category in $\cU$  (see \eqref{rtboihgiuf4f43f34f3f3}) and $X \colon I\to \cC^{+}$ be a diagram.
	We   form the object $\colim_{I}  \cF_{+}(X)$ of $\cC$.
	We have a canonical morphism  $\cF_{+}(X)\to \underline{\colim_{I}  \cF_{+}(X)}$, where $\underline{-}$ denotes the constant $I$-object. We define the marked subcategory of $\colim_{I}  \cF_{+}(X)$ as the subcatgeory generated 
	 by the images of marked isomorphisms under the canonical functors $\cF_{+}(X(i))\to\colim_{I}  \cF_{+}(X) $ for all $i$ in $I$ and denote the resulting object of $\cC^{+}$ by $Y$.
	We claim that the resulting morphism $X\to \underline{Y}$ represents the colimit of the diagram $X$. 	
	If $Y\to T$ is a morphism in $ \cC^{+}$, then
	the  induced functor
	$\cF_{+}(X)\to \underline{\cF_{+}(Y)}\to \underline{\cF_{+}(T)}$  preserves marked isomorphisms, i.e., refines to a morphism in $(\cC^{+})^{I}$.
	Vice versa, if
	$X\to \underline{T}$ is a morphism in $ (\cC^{+})^{I}$,
	then we get an induced morphism
	$ \cF_{+}(Y)\to  \underline{\cF_{+}(T)}$.
	It preserves marked isomorphisms and therefore refines to a morphism in $\cC^{+}$.
	This shows that $\cC^{+}$ is cocomplete.	
	
	Let $X \colon I\to \cC^{+}$ again be a diagram.
	We form the object $\lim_{I}  \cF_{+}(X)$ of $\cC$.
	We have a canonical  morphism $  \underline{\lim_{I}  \cF_{+}(X)}  \to \cF_{+}(X)$.
	We mark all isomorphisms in $\lim_{I}  \cF_{+}(X)$ whose evaluations at  every $i$ in $I$ are marked isomorphisms in $X(i)$. In this way we define an object $Y$ of $\cC^{+}$.
		We claim that the resulting morphism $  \underline{Y}\to X$ represents the limit  of the diagram $X$.
	
	If $T\to Y$ is a morphism in $\cC^{+}$, then the induced
	$\underline{\cF_{+}(T)} \to \underline{\cF_{+}(Y)}  \to \cF_{+}(X)$  refines to   a morphism in $(\cC^{+})^{I}$.
	Vice versa, if
	$\underline{T}\to X$ is a morphism in $(\cC^{+})^{I}$, then we get an induced morphism
	$\underline{\cF_{+}(T)}\to   \cF_{+}(Y)$ which again refines to a morphism in $\cC^{+}$. 
	This shows that $\cC^{+}$ is complete.
\end{proof}

  We let
  \[ \cF_{\All} \colon \cC^{+}\to \Cat\]
  denote the functor which takes the underlying category, i.e., which forgets markings and enrichments (in the case of $\preAdd^{+}$).
   Recall further that we have the functor
   \[ (-)^{+} \colon \cC^{+}\to \Groupoids \]
   taking the groupoid of marked isomorphisms.

 Let  $f\colon\bA\to\bB$ be a morphism in $\cC^{+}$.\begin{lem}
	\label{lem:markedequivs}The following are equivalent. \begin{enumerate}
	\item $f$  is a weak equivalence.
	\item $\cF_{\All}(f)$ and $f^{+}$ are equivalences in $\Cat$ and $\Groupoids$, respectively.
	 \end{enumerate}
\end{lem}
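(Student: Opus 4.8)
The plan is to prove the equivalence of the two conditions by unwinding the definition of weak equivalence in $\cC^+$ (\cref{gijoorgergerg}) and using the explicit description of $\cF_{\All}$ and $(-)^+$ as functors detecting the relevant structure. Recall that $f\colon\bA\to\bB$ is a weak equivalence in $\cC^+$ precisely when there is a morphism $g\colon\bB\to\bA$ in $\cC^+$ together with marked isomorphisms $gf\cong\id_{\bA}$ and $fg\cong\id_{\bB}$ (i.e.\ natural isomorphisms all of whose components are marked).

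For the implication (1)$\Rightarrow$(2): given such a weak-equivalence datum $(g,\eta,\epsilon)$ in $\cC^+$, we simply apply the functors $\cF_{\All}$ and $(-)^+$. Applying $\cF_{\All}$, the morphism $\cF_{\All}(g)$ together with the images of $\eta$ and $\epsilon$ (which become ordinary natural isomorphisms in $\Cat$, since marked isomorphisms are in particular isomorphisms) exhibit $\cF_{\All}(f)$ as an equivalence in $\Cat$. Applying $(-)^+$, the components of $\eta$ and $\epsilon$ being marked means they lie in the subgroupoids $\bA^+$, $\bB^+$; hence $g^+\colon\bB^+\to\bA^+$ together with (the restrictions of) $\eta$ and $\epsilon$ exhibit $f^+$ as an equivalence of groupoids. (For the $\preAdd^+$ case one should note that $\cF_{\All}$ also forgets the $\Ab$-enrichment, and $g$ being $\Ab$-enriched makes $\cF_{\All}(g)$ a genuine functor; the natural isomorphisms are unaffected.)

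For the converse (2)$\Rightarrow$(1), which is the main point, one must produce an inverse $g\colon\bB\to\bA$ \emph{in $\cC^+$}, i.e.\ a functor that is marking-preserving (and $\Ab$-enriched in the pre-additive case), out of the separately given inverses at the level of $\Cat$ and $\Groupoids$. The key observation is that $\cF_{\All}(f)$ being an equivalence already gives, by the usual argument, a functor $h\colon\cF_{\All}(\bB)\to\cF_{\All}(\bA)$ and natural isomorphisms $hf\cong\id$, $fh\cong\id$; one then checks that $h$ automatically sends marked isomorphisms of $\bB$ to marked isomorphisms of $\bA$. Indeed, if $\beta\colon b\to b'$ is marked in $\bB$, then since $f^+$ is an equivalence of groupoids and $\beta$ lies in $\bB^+$, up to the marked natural isomorphism $fh\cong\id_{\bB}$ the morphism $\beta$ is conjugate to $f(\alpha)$ for some marked $\alpha$ in $\bA^+$; hence $h(\beta)$ is conjugate, via components of the marked isomorphism $hf\cong\id_{\bA}$, to $hf(\alpha)$, and one concludes $h(\beta)$ is marked because marked isomorphisms form a subgroupoid closed under composition with the marked components of these natural isomorphisms. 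Thus $h$ refines to a morphism $g$ in $\cC^+$, and the natural isomorphisms $gf\cong\id_{\bA}$, $fg\cong\id_{\bB}$ are then marked because all of their components are (again using that $f^+$, and hence $h$, lands in the marked subgroupoids). In the pre-additive setting one additionally uses that one may choose $h$ to be $\Ab$-enriched: transport the enrichment along the chosen natural isomorphisms, or equivalently note that an $\Ab$-enriched functor which is an equivalence of underlying categories has an $\Ab$-enriched inverse by the standard computation on $\Hom$-groups.

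The step I expect to be the main obstacle is precisely this verification that the inverse functor $h$ constructed at the level of underlying categories is compatible with the markings, i.e.\ that the information carried by the hypothesis ``$f^+$ is an equivalence'' is enough to force $h$ to be marking-preserving. The subtlety is that $h$ is only well-defined up to natural isomorphism, so one is really showing that \emph{some} choice of $h$ is marking-preserving, or better that marking-preservation is invariant under natural isomorphism — which holds because a natural isomorphism connecting two functors has components that are isomorphisms, and conjugating a marked isomorphism by isomorphisms whose source/target data is controlled by the equivalence $f^+$ keeps it marked. Once this is in place, the rest is a routine assembly.
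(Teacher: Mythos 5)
The direction (1)$\Rightarrow$(2) is fine and coincides with the paper's argument: apply $\cF_{\All}$ and $(-)^{+}$ to the inverse-up-to-marked-isomorphism datum.

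The converse, however, contains a genuine gap. You start from a quasi-inverse $h$ of $\cF_{\All}(f)$ produced ``by the usual argument'' and then assert that the natural isomorphisms $hf\cong\id$ and $fh\cong\id$ are \emph{marked}, using this to conclude that $h$ preserves markings. But a quasi-inverse built from $\cF_{\All}(f)$ alone chooses, for each object $b$ of $\bB$, an arbitrary isomorphism $f(h(b))\to b$ via essential surjectivity of the underlying functor; there is no reason for these components to lie in $\bB^{+}$, and if they do not, the formula $h(\beta)=f^{-1}(\eta_{b'}^{-1}\circ\beta\circ\eta_{b})$ need not send marked $\beta$ to marked morphisms. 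Your fallback claim that ``marking-preservation is invariant under natural isomorphism'' is also false: take $\bA=\bB$ to be the groupoid with two objects $b_{0},b_{1}$, automorphism group $\Z/2=\{1,\sigma\}$ at each object (with $\sigma$ central) and exactly two isomorphisms $u,\sigma u\colon b_{0}\to b_{1}$; mark only $\{\id_{b_{0}},\id_{b_{1}},u,u^{-1}\}$ (a wide subgroupoid) and let $f=\id$. Both conditions of the lemma hold, and $\id$ is one quasi-inverse; but the quasi-inverse $h$ with $h(b_{0})=b_{1}$, $h(b_{1})=b_{0}$ and structure isomorphisms $\eta_{b_{0}}=\sigma u^{-1}$ (unmarked), $\eta_{b_{1}}=u$ sends the marked morphism $u$ to $u^{-1}\circ u\circ\sigma u^{-1}=\sigma u^{-1}$, which is unmarked. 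So not every quasi-inverse of the underlying functor works, and conjugation by unmarked components destroys markedness.

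The fix is to reverse the order of construction, which is exactly what the paper does: first choose a quasi-inverse $g^{+}$ of the groupoid equivalence $f^{+}$ together with a natural isomorphism $u\colon\id_{\bB^{+}}\to f^{+}g^{+}$, so that the object assignment and all components $u_{B}$ are marked \emph{by construction}; then extend $g$ to all morphisms via
\[ \Hom_{\bB}(B,B')\xrightarrow{\;\cong\;}\Hom_{\bB}(fg(B),fg(B'))\xleftarrow{\;\cong\;}\Hom_{\bA}(g(B),g(B'))\ ,\]
the first map being conjugation by the (marked) $u_{B}$ and the second coming from full faithfulness of $\cF_{\All}(f)$. Marking-preservation and (in the pre-additive case) $\Ab$-enrichment of $g$ are then automatic, since both maps above are additive and $f$ induces a bijection on marked isomorphisms. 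Your instinct that the marked data must control the choice of inverse is right, but the argument as written puts that control in the wrong place.
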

\begin{proof}
If $f$ is a weak   equivalence, then by   \cref{gijoorgergerg} there exists an inverse $g$ up to marked isomorphism.
Then $\cF_{\All}(g)$ and $g^{+}$ are the required inverse equivalences of $\cF_{\All}(f)$ and $f^{+}$.

We now show the converse.	 We can choose an inverse equivalence $g^+ \colon \bB^+ \to \bA^+$ of $f^{+}$ and a natural  isomorphism $u\colon \id_{\bB^+} \xrightarrow{\cong} f^+g^+$. 	We then define a functor $g \colon \bB \to \bA$ as follows. \begin{enumerate} \item
	On objects:   For an object $B$ of $\bB$ we set $g(B) := g^+(B)$.  \item On morphisms: On the set of morphisms $\Hom_\bB(B,B')$, we define $g$ as the composition
	\[ \Hom_\bB(B,B') \xrightarrow{\cong} \Hom_\bB(fg(B),fg(B')) \xleftarrow{\cong} \Hom_\bA(g(B),g(B'))\ .\]
	 	There the first isomorphism is induced by $u$ and the second isomorphism employs the fact that $\cF_{\All}(f)$ is an equivalence. Since $u$ is given by marked isomorphisms and $ f$  induces a bijection on marked isomorphisms, this map also preserves marked isomorphisms. \end{enumerate}

Then $g$ is the required  inverse of $f$ up to marked isomorphism.  The natural transformations are $u$ and $v\colon \id_{\bA}\to gf$ determined by $f(v_A)=u_{f(A)}$. Note that both are by marked isomorphisms since $f$ is a bijection on marked isomorphisms.
\end{proof}

Note that a weak equivalence not only preserves marked isomorphisms, but also detects them.

Let $\bC$ and $\bD$ be two objects of $\cC^{+}$ and $a \colon \bC\to \bD$ be a morphism. \begin{ddd}\label{wfiojowefewfewfew} The morphism
	$a$ is called  a {\emph{marked isofibration}}, if for every object $d$ of $ \bD$, every object $c$ of $\bC$ and every marked isomorphism  $u \colon a(c)\to d$ in $\bD$  there exists a marked isomorphism $v \colon c\to c^{\prime}$ in $\bC$ such that $a(v)=u$.
\end{ddd}

\begin{ex} \label{efuweifo24frergergreg} 
The object classifier in $\Cat$ is the category $\Delta_{\Cat}^{0}$ with one object $*$ and one morphism $\id_{*}$. 
The object classifier in $\Cat^{+}$ is given by $\Delta^{0}_{\Cat^{+}}:=\mi(\Delta_{\Cat}^{0})$.
Furthermore, the object classifiers in $\preAdd$ and $\preAdd^{+}$ are given by
$\Delta_{\preAdd}^{0}:=\Lin_{\Z}(\Delta_{\Cat}^{0})$ and
$\Delta_{\preAdd^{+}}^{0}:=\Lin_{\Z}(\Delta_{\Cat^{+}}^{0})$, respectively.

The morphism classifier in $\Cat$ is the category $\Delta_{\Cat}^{1}$ with two objects $0$ and $1$, and one non-identity morphism $0\to 1$. 
The morphism classifier in $\Cat^{+}$ is given by $\Delta^{1}_{\Cat^{+}}:=\mi(\Delta_{\Cat}^{1})$.
Furthermore, the morphism classifiers in $\preAdd$ and $\preAdd^{+}$ are given by
$\Delta_{\preAdd}^{1}:=\Lin_{\Z}(\Delta_{\Cat}^{1})$ and
$\Delta_{\preAdd^{+}}^{1}:=\Lin_{\Z}(\Delta_{\Cat^{+}}^{1})$, respectively.

The invertible morphism classifier in $\Cat$ is the category $\bbI_{\Cat}$ with two objects $0$ and $1$, and   non-identity morphisms $0\to 1$ and its inverse $1\to 0$.
The invertible morphism classifier in $\Cat^{+}$ is given by $\bbI_{\Cat^{+}}:=\mi(\bbI_{\Cat})$.
Furthermore, the invertible morphism classifiers in $\preAdd$ and $\preAdd^{+}$ are given by
$\bbI_{\preAdd} :=\Lin_{\Z}(\bbI_{\Cat})$ and
$\bbI_{\preAdd^{+}} :=\Lin_{\Z}(\bbI_{\Cat^{+}})$, respectively.

Finally, the marked isomorphism classifier in $\Cat^{+}$ is given by $\bbI_{\Cat^{+}}^{+}:=\ma(\bbI_{\Cat})$,  and the one in  $\preAdd^{+}$ is given by
$\bbI^{+}_{\preAdd^{+}} :=\Lin_{\Z}(\bbI^{+}_{\Cat^{+}})$.
\end{ex}

We have the following statement about morphisms in   $\cC^{+}$.
 
	\begin{lem}\label{hgionhbaiovnioaghiphn}\mbox{}
		\begin{enumerate}
			\item Trivial fibrations are surjective on objects.
			\item Weak equivalences which are surjective on objects have the right lifting property with respect to all cofibrations.
		\end{enumerate}
		In particular, a weak equivalence is a trivial fibration if and only if it is surjective on objects.
\end{lem}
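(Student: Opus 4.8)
The plan is to verify the two claims separately, since together with the fact that trivial fibrations are in particular weak equivalences, they immediately yield the "in particular" clause: a trivial fibration is a weak equivalence which, by (1), is surjective on objects; conversely a weak equivalence surjective on objects is in particular a cofibration-having-RLP map by (2), hence a fibration, and being a weak equivalence it is a trivial fibration.

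For part (1), I would argue by lifting against the generating cofibration whose domain is empty and whose codomain is the object classifier $\Delta^{0}_{\cC^{+}}$ of \cref{efuweifo24frergergreg} (i.e. $\emptyset\to\Delta^{0}_{\cC^{+}}$). Given a trivial fibration $f\colon\bA\to\bB$ and an object $B$ of $\bB$, the object $B$ corresponds to a morphism $\Delta^{0}_{\cC^{+}}\to\bB$; since $\emptyset\to\Delta^{0}_{\cC^{+}}$ is a cofibration (injective on objects, vacuously) and $f$ is a trivial fibration, we get a lift $\Delta^{0}_{\cC^{+}}\to\bA$, i.e.\ an object of $\bA$ mapping to $B$. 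Thus $f$ is surjective on objects. The only subtlety is to be sure that $\emptyset\to\Delta^{0}_{\cC^{+}}$ really is among the maps we may test against; but a trivial fibration by \cref{gijoorgergerg} has the RLP against \emph{all} trivial cofibrations, and here I should instead observe that trivial fibrations in fact have the RLP against all cofibrations — but that is precisely what part (2) combined with the two-out-of-three property would give, so to avoid circularity I would phrase part (1) purely via: a trivial fibration is a fibration, hence has RLP against the trivial cofibration $\emptyset\to\Delta^{0}_{\cC^{+}}$ (this map is injective on objects and a weak equivalence since both $\emptyset$ and $\Delta^{0}_{\cC^{+}}$ have trivially-equivalent underlying categories and groupoids — wait, $\emptyset\to\Delta^0$ is not a weak equivalence). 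So the correct route for (1) is to use that a trivial fibration is a \emph{weak equivalence}: by \cref{lem:markedequivs} its underlying-category map $\cF_{\All}(f)$ is an equivalence of categories, hence essentially surjective; combined with being a fibration one upgrades essential surjectivity to surjectivity on objects using the marked-isofibration-type lifting that fibrations enjoy. I expect this last upgrade to be the first place where real care is needed: one must check that a fibration in the sense of \cref{gijoorgergerg} is in particular a marked isofibration (\cref{wfiojowefewfewfew}), which follows by testing the RLP against the trivial cofibration $\Delta^{0}_{\cC^{+}}\to\bbI^{+}_{\cC^{+}}$ (inclusion of one object into the marked-isomorphism classifier), a map which is injective on objects and a weak equivalence by \cref{lem:markedequivs}.

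For part (2), let $f\colon\bA\to\bB$ be a weak equivalence which is surjective on objects, and let $i\colon\bC\to\bD$ be a cofibration together with a commuting square $\bC\to\bA$, $\bD\to\bB$. I would construct the lift $\bD\to\bA$ by hand. Choose, using \cref{lem:markedequivs}, an inverse equivalence $g\colon\bB\to\bA$ and marked-natural isomorphisms $u\colon\id_{\bB}\xrightarrow{\cong}fg$, $v\colon\id_{\bA}\xrightarrow{\cong}gf$ as in the proof of that lemma. On objects: $i$ is injective on objects, so partition $\Ob\bD$ into the image of $\Ob\bC$ and its complement; on $i(\Ob\bC)$ define the lift to agree with $\bC\to\bA$, and on the complement use surjectivity of $f$ on objects to pick, for each such $d$ mapping to $b\in\bB$, a preimage in $\bA$ (here one can simply take $g(b)$ after correcting by $u$, but surjectivity lets us choose an honest strict preimage, which is cleaner for the strict-commutativity requirement of the lower triangle on objects). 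On morphisms: define the lift on $\Hom$-sets by transporting along $f$ using the equivalence $\cF_{\All}(f)$ together with the isomorphisms $u,v$, exactly in the style of the morphism-level construction in the proof of \cref{lem:markedequivs}; one then checks this is functorial, enrichment-preserving (in the $\preAdd^{+}$ case, because $\cF_{\All}$-fullness is additive and $u,v$ are natural), marking-preserving (because $f$ is a weak equivalence and hence detects marked isomorphisms, as remarked after \cref{lem:markedequivs}), and makes both triangles commute strictly. I expect the main obstacle to be the bookkeeping required to make the \emph{lower} triangle $f\circ(\text{lift})=(\bD\to\bB)$ hold on the nose rather than merely up to isomorphism: this forces the on-objects choices above to be genuine strict preimages and forces a compatible normalization of the natural isomorphisms $u,v$ along the subcategory $i(\bC)$ (one must arrange $u$ and $v$ to be identities where they would conflict with the already-fixed values coming from $\bC\to\bA$), which is the one genuinely fiddly point; everything else is a routine diagram chase combined with \cref{lem:markedequivs} and \cref{vgeroihirovervbervevev} for additivity.
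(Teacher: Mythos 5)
Your proof follows essentially the same route as the paper: part (1) via the weak-equivalence property plus the right lifting property of the fibration against the trivial cofibration $\Delta^{0}_{\cC^{+}}\to\bbI^{+}_{\cC^{+}}$, and part (2) via an explicit lift built from strict preimages on objects and transport of morphisms along $f$. One small but real gap in your part (1): a marked isofibration only lifts \emph{marked} isomorphisms, so essential surjectivity of $\cF_{\All}(f)$ alone is not enough — you must produce a \emph{marked} isomorphism $f(C)\to D$, which you get either from the definition of weak equivalence (the counit $fg\to\id$ is by marked isomorphisms) or from $f^{+}$ being an equivalence of groupoids in \cref{lem:markedequivs}; with that substitution the argument is exactly the paper's. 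In part (2) your invocation of the inverse equivalence $g$ and the natural isomorphisms $u,v$, and the ensuing worry about normalizing them along $i(\bC)$, is unnecessary: once you have chosen strict preimages on objects, full faithfulness of $\cF_{\All}(f)$ (which holds since it is an equivalence) lets you define $\gamma(b)$ as the \emph{unique} preimage of $\beta(b)$ between the chosen objects, whence $f\circ\gamma=\beta$ holds by construction and $\gamma\circ i=\alpha$ follows from faithfulness; markings are preserved because $f$ detects marked isomorphisms, and additivity because the bijection on hom-sets is a group homomorphism. This is precisely how the paper argues, and it removes the one "fiddly point" you flagged.
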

\begin{proof}
	Let $f \colon \bC \to \bD$ be a trivial fibration and let $D$ in $\bD$ be an object. Since $f$ is a weak equivalence, there exists an object $C$ in $\bC$ and an isomorphism $d \colon f(C) \xrightarrow{\cong} D$.
	Consider the commutative diagram
	\[\xymatrix{
		\Delta^0_{\cC^{+}}\ar[r]^-{C}\ar[d] & \bC\ar[d]^-{f} \\
		\bbI_{\cC^{+}}\ar[r]^-{d} & \bD
	}\]
   Since $\Delta^0_{\cC^{+}} \to \bbI_{\cC^{+}}$ is a trivial cofibration, $d$ admits a lift $c$ to $\bC$ whose codomain is a preimage of $D$.
   
   Let now $f \colon \bC \to \bD$ be a weak equivalence which is surjective on objects. Consider a commutative diagram
   \[\xymatrix{
   	\bA\ar[r]^-{\alpha}\ar[d]_-{i} & \bC\ar[d]^-{f} \\
   	\bB\ar[r]^-{\beta} & \bD
   }\]
   in which $i$ is a cofibration.
   
   We first define the lift $\gamma$ of $\beta$ on objects.
   If $B$ in $\bB$ lies in the image of $i$, there exists a unique object $A$ in $\bA$ with $i(A) = B$, and we set $\gamma(B) = \alpha(A)$. Otherwise, pick any $C$ in $\bC$ such that $f(C) = \beta(B)$ and set $\gamma(B) = C$.   
   For a morphism $b$ in $\bB$,
   define $\gamma(b)$ as the unique preimage of $\beta(b)$ under $f$.
   Then $f \circ \gamma = \beta$ holds by definition, and $\gamma \circ i = \alpha$ also follows easily from the fact that $f$ is faithful. 
\end{proof}

\begin{lem}\label{fweiojweoiffewfwefwef}
	A morphism in $\cC^{+}$ is a marked isofibration if and only if it has the right lifting property with respect to the morphism 
	\[ \Delta^{0}_{\cC^{+}} \xrightarrow{0} \bbI^{+}_{\cC^{+}} \] classifing the object $0$ of $ \bbI^{+}_{\cC^{+}}$.
\end{lem}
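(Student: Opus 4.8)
The plan is to unwind both sides of the equivalence into an explicit lifting condition and observe they coincide. First I would describe the object $\bbI^{+}_{\cC^{+}}$ concretely: it has two objects $0$ and $1$, a marked isomorphism $0 \to 1$ together with its inverse (and, in the $\preAdd^{+}$ case, the $\Z$-linear span of these, with the same marked subgroupoid since $\ma$ and $\Lin_{\Z}$ commute as recorded in \cref{efuweifo24frergergreg}). The morphism $\Delta^{0}_{\cC^{+}} \xrightarrow{0} \bbI^{+}_{\cC^{+}}$ simply picks out the object $0$. A commutative square
\[\xymatrix{
\Delta^{0}_{\cC^{+}} \ar[r]^-{c} \ar[d]_-{0} & \bC \ar[d]^-{a} \\
\bbI^{+}_{\cC^{+}} \ar[r]^-{w} & \bD
}\]
is therefore exactly the data of an object $c$ of $\bC$ together with a marked isomorphism $u \colon a(c) \to d$ in $\bD$, where $d := w(1)$ and $u := w(0 \to 1)$; a lift $\bbI^{+}_{\cC^{+}} \to \bC$ filling the square is exactly a marked isomorphism $v \colon c \to c'$ in $\bC$ with $a(v) = u$ (the linear and marking-preserving structure on the lift being forced once $v$ is prescribed on the generating arrow, and marked since $a$ reflects the marking along this generator by construction of $\bbI^{+}_{\cC^{+}}$).

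Next I would match this with \cref{wfiojowefewfewfew}: a marked isofibration is precisely a morphism $a$ such that for every object $c$ of $\bC$, every object $d$ of $\bD$, and every marked isomorphism $u \colon a(c) \to d$ there is a marked isomorphism $v \colon c \to c'$ in $\bC$ with $a(v) = u$. Comparing with the previous paragraph, the two conditions are literally the same once one identifies a square over $\Delta^{0}_{\cC^{+}} \to \bbI^{+}_{\cC^{+}}$ with a triple $(c, d, u)$ and a diagonal lift with such a $v$. Hence $a$ has the right lifting property against $\Delta^{0}_{\cC^{+}} \xrightarrow{0} \bbI^{+}_{\cC^{+}}$ if and only if $a$ is a marked isofibration, which is the claim.

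The only genuine point to check — and thus the main (mild) obstacle — is that a diagonal lift in the square really carries no data beyond the morphism $v$, i.e.\ that a functor (resp.\ $\Z$-linear, marking-preserving functor) out of $\bbI^{+}_{\cC^{+}}$ is determined by, and can be freely specified by, its value on the generating marked arrow $0 \to 1$, subject only to landing that arrow on a marked isomorphism. For $\cC = \Cat$ this is immediate from the description of $\bbI_{\Cat^{+}}^{+} = \ma(\bbI_{\Cat})$ as a free-living isomorphism; for $\cC = \preAdd$ it follows because $\bbI^{+}_{\preAdd^{+}} = \Lin_{\Z}(\bbI^{+}_{\Cat^{+}})$ and $\Lin_{\Z}$ is left adjoint to $\cF_{\Z}$, so that $\Z$-linear functors out of it correspond to ordinary functors out of $\bbI^{+}_{\Cat^{+}}$, compatibly with markings by the last sentence of \cref{rbgeoirgergergergre2r4}. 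With that observation in hand, the equivalence is a direct translation and requires no further argument.
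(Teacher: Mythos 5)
Your proof is correct and is essentially the paper's argument: the paper simply states that, in view of the universal properties of $\Delta^{0}_{\cC^{+}}$ and $\bbI^{+}_{\cC^{+}}$, the lemma is a reformulation of \cref{wfiojowefewfewfew}, and your write-up spells out exactly that translation (including the adjunction $\Lin_{\Z} \dashv \cF_{\Z}$ to handle the pre-additive case). No gaps.
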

\begin{proof}
	In view of the universal properties of $\Delta^{0}_{\cC^{+}}$ and $\bbI_{\cC^{+}}^{+}$ this is just a reformulation of \cref{wfiojowefewfewfew}.
\end{proof}

Since $\Delta^{0}_{\cC^{+}} \xrightarrow{0} \bbI^{+}_{\cC^{+}}$ is a trivial cofibration  we conclude that
fibrations are marked isofibrations. 

\begin{prop}\label{fiuehfieufwfewfwef}
	The marked isofibrations in $\cC^{+}$ have the  right lifting property with respect to trivial cofibrations.
\end{prop}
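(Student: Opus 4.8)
The plan is to produce the lift explicitly. Let $i\colon\bA\to\bB$ be a trivial cofibration and $f\colon\bC\to\bD$ a marked isofibration, sitting in a commutative square with top edge $\alpha\colon\bA\to\bC$ and bottom edge $\beta\colon\bB\to\bD$. The strategy has two steps: first exhibit $i$ as a \emph{strong deformation retract} inside $\cC^{+}$, i.e.\ construct a morphism $r\colon\bB\to\bA$ in $\cC^{+}$ with $r\circ i=\id_{\bA}$ \emph{on the nose} together with a natural isomorphism $\phi\colon i\circ r\Rightarrow\id_{\bB}$ whose components are marked isomorphisms and whose restriction along $i$ is the identity; then transport the evident candidate lift $\alpha\circ r$ along a lift of $\beta_{*}\phi$ supplied by the marked-isofibration property.

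\emph{Step 1 (the retraction).} By \cref{lem:markedequivs}, $\cF_{\All}(i)$ is an equivalence of categories and $i^{+}$ is an equivalence of groupoids; moreover $i$ is injective on objects, being a cofibration (\cref{gijoorgergerg}). Using essential surjectivity of $i^{+}$, choose for every object $B$ of $\bB$ an object $r(B)$ of $\bA$ and a marked isomorphism $\phi_{B}\colon i(r(B))\to B$, with the normalization $r(B):=A$ and $\phi_{B}:=\id_{i(A)}$ whenever $B=i(A)$ lies in the (uniquely determined) image of $i$. On a morphism $b\colon B\to B'$ of $\bB$, full faithfulness of $\cF_{\All}(i)$ lets one define $r(b)$ as the unique morphism $r(B)\to r(B')$ with $i(r(b))=\phi_{B'}^{-1}\circ b\circ\phi_{B}$. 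Routine verifications then show: $r$ is a functor with $r\circ i=\id_{\bA}$, $\phi=(\phi_{B})_{B}$ is a natural isomorphism $i\circ r\Rightarrow\id_{\bB}$ with $\phi\ast i=\id$; in the $\preAdd$-case $r$ is compatible with the $\Ab$-enrichment, since pre- and post-composition by fixed morphisms are additive and $i$ is an additive bijection on hom-groups; and $r$ preserves marked isomorphisms, because by fullness of $i^{+}$ together with faithfulness of $\cF_{\All}(i)$, $r(b)$ is a marked isomorphism whenever $b$ is. Thus $r$ is a morphism of $\cC^{+}$.

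\emph{Step 2 (the transport).} Put $\gamma_{0}:=\alpha\circ r\colon\bB\to\bC$, so that $\gamma_{0}\circ i=\alpha$ and $f\circ\gamma_{0}=\beta\circ i\circ r$. For each object $B$ of $\bB$ the morphism $\beta(\phi_{B})\colon f(\gamma_{0}(B))=\beta(i(r(B)))\to\beta(B)$ is a marked isomorphism of $\bD$, so the marked-isofibration property of $f$ yields a marked isomorphism $\psi_{B}\colon\gamma_{0}(B)\to\gamma(B)$ in $\bC$ with $f(\psi_{B})=\beta(\phi_{B})$; for $B$ in the image of $i$ take $\psi_{B}:=\id$. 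Define $\gamma$ on objects by $B\mapsto\gamma(B)$ and on morphisms by $\gamma(b):=\psi_{B'}\circ\gamma_{0}(b)\circ\psi_{B}^{-1}$ for $b\colon B\to B'$. Then $\psi$ is by construction a natural isomorphism $\gamma_{0}\Rightarrow\gamma$, so $\gamma$ is a functor, compatible with the $\Ab$-enrichment in the $\preAdd$-case since conjugation is additive, and preserving marked isomorphisms since composites and inverses of marked isomorphisms are marked. Using $f\alpha=\beta i$ and the identity $i(r(b))=\phi_{B'}^{-1}b\phi_{B}$, one checks $f\circ\gamma=\beta$, and the normalizations $\phi_{B}=\id$, $\psi_{B}=\id$, $r\circ i=\id_{\bA}$ on the image of $i$ give $\gamma\circ i=\alpha$. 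Hence $\gamma$ solves the lifting problem.

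\emph{Main obstacle.} There is no single deep point; the difficulty is the bookkeeping needed to keep everything inside $\cC^{+}$: one must check that each map produced by a choice is a genuine morphism of $\cC^{+}$ (functorial, compatible with the $\Ab$-enrichment in the $\preAdd$-case, and marking-preserving), and — more delicately — one must coordinate the choices so that $r\circ i=\id_{\bA}$ and $\gamma\circ i=\alpha$ hold strictly rather than merely up to isomorphism; this is precisely where injectivity of $i$ on objects (the cofibration hypothesis) is used. A secondary subtlety is that recognizing marked isomorphisms through $i$ requires fullness of $i^{+}$, which is strictly stronger than fullness of $\cF_{\All}(i)$, which is why \cref{lem:markedequivs} is invoked rather than merely the fact that $\cF_{\All}(i)$ is an equivalence.
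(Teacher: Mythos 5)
Your proof is correct and follows essentially the same route as the paper: exhibit the trivial cofibration as a strong deformation retract (the paper's $j$ and $u$ with $j\circ i=\id_{\bA}$ and $u\circ i=\id_{i}$, your $r$ and $\phi$), then lift the marked isomorphisms $\beta(\phi_B)$ through the marked isofibration, normalizing to identities on the image of $i$, and conjugate to define the lift on morphisms. The only difference is that your Step 1 spells out, via \cref{lem:markedequivs}, the construction of the retraction that the paper merely asserts.
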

\begin{proof}
	We consider a diagram    
	\[\xymatrix{\bA\ar[r]^{\alpha}\ar[d]^{i}&\bC\ar[d]^{f}\\\bB\ar@{.>}[ur]^{\ell}\ar[r]^{\beta}&\bD} \]
	in $\cC^{+}$,
	where $f$ is a marked isofibration and $i$ is a trivial cofibration.
	We can find now a morphism $j \colon \bB\to \bA$ such that $j\circ i=\id_{\bA}$ and such that there is a marked isomorphism $u \colon i\circ j\to \id_{\bB}$  which in addition satisfies $u\circ i=\id_{i}$.

	 {On objects we define $\ell$ as follows: 
		For every object $B$ of $\bB$ we get a marked isomorphism \[\beta(u_{B}) \colon f(\alpha(j(B))=\beta(i(j(B)))\to  \beta(B)\ .\] Using that $f$ is a marked isofibration we choose a marked isomorphism $v_B \colon \alpha(j(B))\to C$ such that $f(v_B)=\beta(u_{B})$. If $B$ is in the image of $i$, we can and will choose $v_B$ to be the identity. We then set $\ell(B):=C$. This makes both triangles commute.}
	
	We now define the lift $\ell$ on a morphism $\phi \colon B\to B^{\prime}$ by 
		\[\ell(\phi):=v_{B'}\circ \alpha(j(\phi))\circ v_B^{-1}\ .\]
	
	One can check that then both triangles commutes and that this really defines a functor.
	One further checks that $\ell$ is a morphism of marked categories (and preserves the enrichment in the case of pre-additive categories). Here we use that $i$ detects marked isomorphisms. 
\end{proof}

\begin{kor}\label{wfeiweiofewfewfewf}
 {The notions of marked isofibration and fibration in $\cC^{+}$ coincide.}
\end{kor}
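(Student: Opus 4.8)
The plan is to observe that both implications have in fact already been proved in the preceding results, so that only a short combination is needed.

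For the direction \emph{marked isofibration $\Rightarrow$ fibration}, I would recall that by \cref{gijoorgergerg} a fibration in $\cC^{+}$ is, by definition, exactly a morphism having the right lifting property with respect to all trivial cofibrations. But \cref{fiuehfieufwfewfwef} asserts precisely that every marked isofibration has this lifting property. Hence every marked isofibration is a fibration.

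For the converse \emph{fibration $\Rightarrow$ marked isofibration}, I would use that $\Delta^{0}_{\cC^{+}} \xrightarrow{0} \bbI^{+}_{\cC^{+}}$ is a trivial cofibration — it is injective on objects, hence a cofibration, and it is a weak equivalence, an inverse up to marked isomorphism being the unique functor $\bbI^{+}_{\cC^{+}} \to \Delta^{0}_{\cC^{+}}$, with the required natural isomorphism assembled from the marked isomorphism $0 \to 1$ in $\bbI^{+}_{\cC^{+}}$. Consequently any fibration $f$ has the right lifting property with respect to $\Delta^{0}_{\cC^{+}} \xrightarrow{0} \bbI^{+}_{\cC^{+}}$, which by \cref{fweiojweoiffewfwefwef} is equivalent to $f$ being a marked isofibration. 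This is exactly the observation already recorded in the remark immediately preceding the corollary.

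Putting the two implications together yields the claim. I do not expect any genuine obstacle; the only point requiring care is to invoke the definition of fibration from \cref{gijoorgergerg} when deducing from \cref{fiuehfieufwfewfwef} that marked isofibrations are fibrations, so that the argument does not become circular.
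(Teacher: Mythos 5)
Your proposal is correct and is precisely the argument the paper intends: the corollary is stated without proof because it is the immediate combination of \cref{fiuehfieufwfewfwef} (marked isofibrations lift against trivial cofibrations, hence are fibrations by \cref{gijoorgergerg}) with the remark preceding it (fibrations lift against the trivial cofibration $\Delta^{0}_{\cC^{+}} \to \bbI^{+}_{\cC^{+}}$, hence are marked isofibrations by \cref{fweiojweoiffewfwefwef}). Your added verification that $\Delta^{0}_{\cC^{+}} \to \bbI^{+}_{\cC^{+}}$ is indeed a trivial cofibration is a correct and welcome detail.
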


\begin{rem}\label{fiowefwefewfewf}
	We note that all objects in $\cC^{+}$ are fibrant and cofibrant.
Consequently, the model category  $\cC^{+}$ is proper by \cite[Cor.~13.1.3]{MR1944041}
\end{rem}

\begin{prop}\label{fweiowefwefewffewf}
	The cofibrations in $\cC^{+}$ have the left-lifting property with respect to trivial fibrations.
\end{prop}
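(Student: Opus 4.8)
The plan is to show that any cofibration $i\colon \bA\to\bB$ in $\cC^{+}$ (i.e.\ a functor injective on objects, preserving markings and enrichments) has the left-lifting property against trivial fibrations. By \cref{hgionhbaiovnioaghiphn}, a trivial fibration $f\colon\bC\to\bD$ is precisely a weak equivalence which is surjective on objects, and such maps have the right-lifting property with respect to \emph{all} cofibrations by the second part of that lemma. So in principle the statement is an immediate corollary of \cref{hgionhbaiovnioaghiphn}; the only thing to be careful about is the circularity of logic in the overall development. Here we are allowed to use \cref{hgionhbaiovnioaghiphn}, whose proof does not invoke the present proposition, so there is no issue.

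Concretely, I would argue as follows. Given a square with $i\colon\bA\to\bB$ a cofibration on the left and $f\colon\bC\to\bD$ a trivial fibration on the right, the first part of \cref{hgionhbaiovnioaghiphn} gives that $f$ is surjective on objects; the second part then directly produces the lift $\gamma\colon\bB\to\bC$ with $f\circ\gamma=\beta$ and $\gamma\circ i=\alpha$. The construction of $\gamma$ in that proof already handles the three relevant points: on objects, one sends objects in the image of $i$ via $\alpha$ (using injectivity of $i$ on objects) and objects not in the image to an arbitrarily chosen $f$-preimage (using surjectivity on objects); on morphisms, one uses that a weak equivalence is fully faithful (hence $f$ is fully faithful, by \cref{lem:markedequivs} applied to the underlying categories), so each $\beta(b)$ has a unique $f$-preimage, and one defines $\gamma(b)$ to be that preimage. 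One then checks that $\gamma$ is a morphism in $\cC^{+}$: it preserves composition and identities because $f$ is faithful, it preserves marked isomorphisms because a weak equivalence detects marked isomorphisms (the remark following \cref{lem:markedequivs}), and in the pre-additive case it preserves the $\Ab$-enrichment because $f$ does and $f$ is faithful, so additivity of $\gamma$ on hom-groups is forced.

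The main obstacle — really the only point requiring any care — is the compatibility of the object-level choices with the morphism-level definition and with $\gamma\circ i=\alpha$. For objects $B=i(A)$ in the image of $i$ we must take $\gamma(B)=\alpha(A)$ rather than an arbitrary preimage, and then we must confirm $f(\alpha(A))=\beta(i(A))$, which holds by commutativity of the original square. For a morphism $b\colon B\to B'$ in $\bB$, $\gamma(b)$ is the unique preimage of $\beta(b)$ under $f$; when $b=i(a)$ lies in the image of $i$, both $\alpha(a)$ and $\gamma(i(a))$ are preimages of $\beta(i(a))=f(\alpha(a))$ under the faithful functor $f$, hence they agree, giving $\gamma\circ i=\alpha$ on morphisms as well. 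Since all of this is exactly the content already extracted in the proof of \cref{hgionhbaiovnioaghiphn}, the proof of the proposition is a one-line reduction, and I would simply write: ``This is immediate from \cref{hgionhbaiovnioaghiphn}: a trivial fibration in $\cC^{+}$ is a weak equivalence which is surjective on objects, and such a morphism has the right lifting property with respect to all cofibrations.''
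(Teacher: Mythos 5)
Your proposal is correct and follows essentially the same route as the paper: the paper's own proof of \cref{fweiowefwefewffewf} repeats the construction from the proof of \cref{hgionhbaiovnioaghiphn} (lift on objects via injectivity of $i$ and surjectivity of $f$, lift on morphisms via full faithfulness of $f$, marked isomorphisms preserved because $f$ detects them), so your observation that the proposition is an immediate corollary of that lemma is accurate and involves no circularity.
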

\begin{proof}
 We consider a diagram    
		\[\xymatrix{\bA\ar[r]^{\alpha}\ar[d]^{i}&\bC\ar[d]^{f}\\\bB\ar@{.>}[ur]^{\ell}\ar[r]^{\beta}&\bD} \] 
		in $\cC^{+}$,
		where $f$ is a trivial fibration and $i$ is a cofibration.
	
	Since the map $i$ is injective on objects and the morphism $f$ is surjective on objects by \cref{hgionhbaiovnioaghiphn},
	we can find a lift $\ell$ on the level of objects. Let now $B,B^{\prime}$ be objects in $\bB$. 
	Since $f$ is fully faithful  we have a bijection 
	\[\xymatrix{\Hom_{\bC}(\ell(B),\ell(B^{\prime})) \ar[r]^f_\cong&\Hom_{\bD}(\beta(B),\beta(B^{\prime}) )}  \ .\]
	We can therefore define $\ell$ on   $\Hom_{\bB}(B,B^{\prime})$ by
	\[ \Hom_{\bB}(B,B^{\prime})\xrightarrow{\beta}  \Hom_{\bD}(\beta(B),\beta(B^{\prime}))\cong \Hom_{\bC}(\ell(B),\ell(B^{\prime}))\ .\]
	Since $f$ detects marked isomorphisms, $\ell$ preserves them.
	The lower triangle commutes by construction. One can furthermore check that the upper triangle commutes. Finally one checks that this really defines a functor.  
\end{proof}

\begin{lem}\label{fwiowowfefwefwef334}
	The weak equivalences in $\cC^{+}$ satisfy the two-out-of-three axiom. 
\end{lem}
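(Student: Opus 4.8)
The plan is to reduce the claim to the corresponding well-known statement for equivalences in $\Cat$ and $\Groupoids$ by means of the characterization of weak equivalences provided by \cref{lem:markedequivs}. First I would recall that the class of equivalences of categories and the class of equivalences of groupoids each satisfy the two-out-of-three property; the quickest justification is that a functor is an equivalence precisely when it admits a quasi-inverse, so that after passing to the homotopy category of categories (in which equivalences become genuine isomorphisms) the assertion reduces to the trivial two-out-of-three property for isomorphisms.

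Next, given a composable pair $\bA \xrightarrow{f} \bB \xrightarrow{g} \bC$ in $\cC^{+}$ of which two of $f$, $g$, $g\circ f$ are weak equivalences, I would apply the two functors $\cF_{\All}\colon \cC^{+}\to \Cat$ and $(-)^{+}\colon \cC^{+}\to \Groupoids$. Since both functors preserve composition, in each of the resulting composable pairs $\cF_{\All}(f),\cF_{\All}(g)$ in $\Cat$ and $f^{+},g^{+}$ in $\Groupoids$, the two morphisms corresponding to whichever two of $f$, $g$, $g\circ f$ were assumed to be weak equivalences are equivalences by \cref{lem:markedequivs}. By the two-out-of-three property in $\Cat$ and in $\Groupoids$, respectively, the third morphism is then an equivalence in each of the two cases. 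Hence, for the remaining one of $f$, $g$, $g\circ f$, both its image under $\cF_{\All}$ and its image under $(-)^{+}$ are equivalences, so by \cref{lem:markedequivs} it is itself a weak equivalence in $\cC^{+}$.

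I do not expect any genuine obstacle here: the whole content is the translation through \cref{lem:markedequivs}, after which the statement is inherited from the classical fact that equivalences of categories (and of groupoids) satisfy two-out-of-three. The only point deserving an explicit sentence is that justification, which is standard and which I would dispatch via the quasi-inverse description as indicated above.
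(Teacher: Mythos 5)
Your proof is correct, but it takes a genuinely different route from the paper's. The paper argues directly inside $\cC^{+}$: for the case where $f$ and $g\circ f$ are weak equivalences it writes down an explicit candidate inverse $h:=f\circ n$ of $g$ (where $n$ is an inverse of $g\circ f$ up to marked isomorphism) and exhibits the required marked isomorphisms $h\circ g\to \id$ and $g\circ h\to\id$ as explicit composites of the given ones; the remaining case is declared analogous. You instead reduce everything to the classical two-out-of-three property for equivalences in $\Cat$ and in $\Groupoids$ via the characterization of \cref{lem:markedequivs}, which is legitimate since that lemma is established before the two-out-of-three statement and its proof does not depend on it. Your route is shorter and avoids redoing the diagram chase in the marked setting, at the cost of invoking \cref{lem:markedequivs} in both directions and the (standard, but still requiring a sentence, as you note) fact that equivalences of categories satisfy two-out-of-three; the paper's route is self-contained and has the minor benefit of producing an explicit formula for a homotopy inverse of the third morphism. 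It is worth observing that your justification of the classical fact -- equivalences become isomorphisms in the quotient by natural isomorphism, and isomorphisms in any category satisfy two-out-of-three -- is exactly the argument the paper unwinds by hand one level up, in $\cC^{+}$ itself.
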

\begin{proof}
	It is clear that the composition of weak equivalences is a weak equivalence. 
	Assume that $f \colon \bA\to \bB$ and $g \colon \bB\to \bC$ are morphisms such that $f$ and $g\circ f$ are weak equivalences. Then we must show that $g$ is a weak equivalence. Let
	$m \colon \bB\to \bA$ and $n \colon \bC\to \bA$ be inverse functors and let
	$u \colon m\circ f\to \id_{\bA}$, $v \colon f\circ m\to \id_{\bB}$,
	$x \colon n\circ g\circ f\to \id_{\bA}$ and $y \colon g\circ f\circ n\to \id_{\bC}$ be the corresponding marked
	isomorphisms.
	Then we consider the functor
	$h:=f\circ n \colon \bC\to \bB$. We have marked isomorphisms
	\[ h\circ g=f\circ n\circ g\xrightarrow{ v^{-1} } f\circ n\circ g\circ f\circ m\xrightarrow{x}  f\circ m\xrightarrow{v} \id_{\bB}\ .\]
	and
	 \[g\circ h=g\circ f\circ n\xrightarrow{y} \id_{\bC}\ .\]
	 If $g$ and $g \circ f$ are weak equivalences, the argument is analogous.
\end{proof}

\begin{prop}\label{wfeoifjowefwefewfw}
	The cofibrations, fibrations, and weak equivalences in $\cC^{+}$ are closed under retracts.
\end{prop}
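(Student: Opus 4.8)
The plan is to treat the three classes of morphisms separately, reducing each to a standard closure property. Fix a retract diagram exhibiting $f\colon\bA\to\bB$ as a retract of $g\colon\bC\to\bD$: it is given by morphisms $s\colon\bA\to\bC$, $r\colon\bC\to\bA$, $s'\colon\bB\to\bD$, $r'\colon\bD\to\bB$ in $\cC^{+}$ with $rs=\id_{\bA}$, $r's'=\id_{\bB}$, $gs=s'f$ and $fr=r'g$. For cofibrations: by \cref{gijoorgergerg} a morphism of $\cC^{+}$ is a cofibration precisely when it is injective on objects, i.e.\ when its image under the functor $\Ob\colon\cC^{+}\to\mathbf{Set}$ sending a marked category to its set of objects is injective; applying $\Ob$ exhibits $\Ob(f)$ as a retract of $\Ob(g)$ in the arrow category of $\mathbf{Set}$, and injections are closed under retracts in $\mathbf{Set}$ by a one-line diagram chase.

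For fibrations: by \cref{gijoorgergerg} these are the morphisms with the right lifting property against all trivial cofibrations, and for any fixed class of morphisms the class of morphisms having the right lifting property against it is closed under retracts. The standard argument applies: given a lifting problem for $f$ against a trivial cofibration $i$, post-compose the square with $s$ and $s'$ to obtain a lifting problem for $g$ against $i$, choose a solution $\ell$, and check, using the retract identities, that $r\ell$ solves the original problem.

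For weak equivalences, which is where the only genuine bookkeeping occurs: choose, by \cref{gijoorgergerg}, a quasi-inverse $\bar g\colon\bD\to\bC$ of $g$ together with marked natural isomorphisms $\iota\colon\id_{\bC}\to\bar g g$ and $\kappa\colon g\bar g\to\id_{\bD}$, and set $\bar f:=r\bar g s'\colon\bB\to\bA$. Then $s'f=gs$ gives $\bar f f=r\bar g g s$, so whiskering $\iota$ by $r$ on the left and by $s$ on the right produces a natural isomorphism $\id_{\bA}=rs\to r\bar g g s=\bar f f$; symmetrically $fr=r'g$ gives $f\bar f=r'g\bar g s'$, and whiskering $\kappa$ by $r'$ and $s'$ produces a natural isomorphism $f\bar f\to r's'=\id_{\bB}$. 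Since $r,s,r',s'$ are morphisms of $\cC^{+}$ and hence preserve marked isomorphisms, and since pointwise composites of marked isomorphisms are again marked, both whiskered transformations consist of marked isomorphisms (and in the pre-additive case the whole construction is manifestly enrichment-preserving); thus $\bar f$ inverts $f$ up to marked isomorphism and $f$ is a weak equivalence. As an alternative, one could invoke \cref{lem:markedequivs} to reduce to the analogous statement for equivalences in $\Cat$ and in $\Groupoids$, since $\cF_{\All}$ and $(-)^{+}$ are functors, and run the same argument there.

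I expect none of the three cases to be hard. The one point requiring care is in the weak-equivalence case: one must check that every natural transformation produced is \emph{pointwise} a marked isomorphism, and that the identity functors occurring are literally the composites $rs$ and $r's'$ so that source and target match; both become evident once one records that the structure morphisms of the retract diagram lie in $\cC^{+}$.
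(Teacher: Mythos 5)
Your proposal is correct and follows the same route as the paper: fibrations via closure of right-lifting-property classes under retracts, cofibrations via applying the object functor and closure of injections under retracts in $\mathbf{Set}$, and weak equivalences by taking $r\bar g s'$ as the candidate inverse (the paper writes $p\circ g'\circ j$ and leaves the whiskering verification implicit, which you spell out). No gaps.
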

\begin{proof}
	Since fibrations are characterized by a right lifting property they are closed under retracts.
	Cofibrations are closed under retracts since a retract diagram of marked categories induces a retract diagram on the level of sets of objects, and injectivity of maps between  sets is closed under retracts.
	It remains to consider weak equivalences. We consider a diagram
	\[\xymatrix{\bA\ar[r]^{i}\ar[d]^{f}&\bA^{\prime}\ar[r]^{p}\ar[d]^{f^{\prime}}&\bA\ar[d]^{f}\\\bB\ar[r]^{j}&\bB^{\prime}\ar[r]^{q}&\bB}\]
	in $\cC^{+}$ with $p\circ i=\id_{\bA}$ and $q\circ j=\id_{\bB}$, and where $f^{\prime}$ is a weak equivalence. Let $g^{\prime} \colon \bB^{\prime}\to \bA^{\prime}$ be an inverse of $f^{\prime}$ up to marked isomorphism.
	Then $p\circ g^{\prime}\circ j \colon \bB\to \bA$ is an inverse of $f$ up to marked isomorphism.
\end{proof}

We have finished the verification of the basic model category axioms except the existence of factorizations. This follows from considerations about the simplicial structure which we do now.

 {We define a functor \begin{equation}\label{rthoir3terhtrhrth}
Q\colon\Groupoids\to \cC^{(+)}
\end{equation}
as follows. Let $i\colon\Groupoids\to \Cat$ be the inclusion.
\begin{enumerate}
	\item In the case $\Cat$, we define $Q:=i$.
	\item In the case $\Cat^+$, we define $Q:=\ma\circ i$.
	\item In the case $\preAdd$, we define $Q:=\Lin_\Z\circ i$.
	\item In the case $\preAdd^+$, we define $Q:=\Lin_\Z\circ\ma\circ i$.
\end{enumerate}
For (marked) preadditive categories we need a further symmetric monoidal product structure $\otimes$ (which differs from the cartesian structure) on $\preAdd^{+}$ given as follows:
\begin{enumerate}
	\item (objects) The objects of $\bA\otimes \bB$ are pairs $(A,B)$ of objects  $A$ in $\bA$ and $B$ in $\bB$.
	\item (mophisms) The abelian group of morphisms between $(A,B)$ and $(A^{\prime},B^{\prime})$ is given by
	\[ \Hom_{\bA\otimes \bB}((A,B),(A^{\prime},B^{\prime})):=\Hom_{\bA}(A,A^{\prime})\otimes \Hom_{\bB}(B,B^{\prime})\ .\]
	The composition is defined in the obvious way.
	\item(marking) We mark tensor products of marked isomorphisms.
\end{enumerate}
We refrain from writing out the remaining data (unit, unit- and associativity constraints) explicitly.}

 In order to define a tensor structure of $\cC^{(+)}$ over simplicial sets, we start with a tensor structure over groupoids.
\begin{ddd}\label{defn_functor_sharp}
	In the case $\Cat^{(+)}$ we define the functor   \[-\sharp -\colon \Cat^{(+)} \times \Groupoids \to \Cat^{(+)} \ ,\quad (\bA,G)\mapsto \bA\sharp G:=\bA\times  Q(G).\] 
	In the case $\preAdd^{(+)}$ we define the functor   \[-\sharp -\colon \preAdd^{(+)} \times \Groupoids \to \preAdd^{(+)} \ ,\quad (\bA,G)\mapsto \bA\sharp G:=\bA\otimes  Q(G).\qedhere\] 
\end{ddd}

Let $\bB$ be in $\cC^{+}$. In the following lemma, we will write $\otimes$ for the product in $\Cat^+$, to avoid distinguishing between $\Cat^+$ and $\preAdd^+$.

 \begin{lem}\label{gioergegergreg}
	We have an adjunction
	\[ -\otimes \bB \colon \cC^{+}\leftrightarrows \cC^{+} \colon \Fun^{+}_{\cC^{+}} (\bB  ,-)\ ,\]
	where we view $\cC^+$ as enriched over $\cC^{+}$.
\end{lem}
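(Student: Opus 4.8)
The plan is to establish the adjunction by exhibiting a natural bijection between morphism sets $\Hom_{\cC^{+}}(\bA \otimes \bB, \bC)$ and $\Hom_{\cC^{+}}(\bA, \Fun^{+}_{\cC^{+}}(\bB, \bC))$, where the right-hand functor category is the \emph{marked} one as in \cref{gwiogefwerfwefwefewfw} (respectively \cref{gwiogefwerfwefwefewfwadd}), viewed as an object of $\cC^{+}$. The construction is the evident currying/uncurrying: given a morphism $F \colon \bA \otimes \bB \to \bC$, define $\widehat{F} \colon \bA \to \Fun^{+}_{\cC^{+}}(\bB, \bC)$ by sending an object $A$ of $\bA$ to the functor $F(A, -) \colon \bB \to \bC$ and a morphism $\alpha \colon A \to A'$ to the natural transformation $F(\alpha, -)$; and conversely given $G \colon \bA \to \Fun^{+}_{\cC^{+}}(\bB, \bC)$, define $\check{G}(A, B) := G(A)(B)$ with the obvious behaviour on morphisms. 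First I would check this is well-defined at the level of the underlying (pre-additive) categories, using that in the $\preAdd^{+}$ case $\Hom_{\bA \otimes \bB}((A,B),(A',B')) = \Hom_{\bA}(A,A') \otimes \Hom_{\bB}(B,B')$, so that a bilinear map out of the tensor product of hom-groups is the same as a linear map into a hom-group of the $\Ab$-enriched functor category; in the $\Cat^{+}$ case this is the classical cartesian-closure bijection.

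The substantive point is to verify that the bijection respects \emph{markings} on both sides, in both directions. In the forward direction: if $F \colon \bA \otimes \bB \to \bC$ preserves marked isomorphisms, I must show $\widehat{F}$ does too, i.e. that for a marked isomorphism $\alpha \colon A \to A'$ in $\bA$, the natural transformation $F(\alpha, -) \colon F(A,-) \Rightarrow F(A',-)$ is marked in $\Fun^{+}_{\cC^{+}}(\bB,\bC)$, which by definition means $F(\alpha, B)$ is a marked isomorphism in $\bC$ for every object $B$ of $\bB$. But $(\alpha, \id_B)$ is a marked isomorphism in $\bA \otimes \bB$ — here I use that the marking on $\bA \otimes \bB$ is generated by (in the $\Cat^{+}$ case, is exactly) products of marked isomorphisms, and $\id_B$ is marked — so $F$ carries it to a marked isomorphism. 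Conversely, if $G \colon \bA \to \Fun^{+}_{\cC^{+}}(\bB, \bC)$ is marking-preserving, I must show $\check{G}$ is; a marked isomorphism in $\bA \otimes \bB$ is (a composite of) something of the form $\alpha \otimes \beta$ with $\alpha, \beta$ marked, and $\check{G}(\alpha \otimes \beta) = G(\alpha)(B') \circ G(A)(\beta)$ (or the other composite), where $G(A)(\beta)$ is marked because $G(A)$ is a morphism in $\cC^{+}$ hence preserves markings, and $G(\alpha)(B')$ is marked because $G(\alpha)$ is a marked natural transformation. Naturality of the bijection in all three variables is then a routine check, and the enrichment statement — that the bijection is in fact an isomorphism of hom-objects in $\cC^{+}$, not merely of underlying sets — follows by applying the same currying argument to natural transformations and observing it matches marked isomorphisms on each side.

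The main obstacle I anticipate is bookkeeping around the marking on $\bA \otimes \bB$ in the pre-additive case: since the marking there is only \emph{generated} by tensor products of marked isomorphisms (and, as the remark after \cref{rbgeoirgergergergre2r4} warns, the marked subcategory of a marked pre-additive category need not be closed under sums), one must be slightly careful that "preserves marked isomorphisms" for a functor out of $\bA \otimes \bB$ is equivalent to "sends each generating marked isomorphism $\alpha \otimes \beta$ to a marked isomorphism" — this is immediate since functors preserve composition and the marked subcategory of $\bC$ is by definition a subcategory (closed under composition), but it is the one place where the non-cartesian nature of $\otimes$ must be handled explicitly rather than by citing classical cartesian closedness. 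Everything else is the standard adjunction for the internal hom of an enriched category, transported through the forgetful functor $\cF_{\All}$.
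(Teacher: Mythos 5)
Your proposal is correct and is essentially the paper's own argument: both come down to the standard currying/uncurrying bijection together with the check that markings are respected in each direction (using that identity components are marked and that the marking on $\bA\otimes\bB$ consists of composites of tensor products of marked isomorphisms). The only presentational difference is that the paper packages the adjunction as an explicit unit $\eta_{\bA}\colon \bA \to \Fun^{+}_{\cC^{+}}(\bB,\bA\otimes\bB)$ and counit given by evaluation, verified via the triangle identities, whereas you exhibit the natural isomorphism of hom-objects directly; the two formulations are interchangeable and require the same marking bookkeeping.
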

\begin{proof}  We provide an explicit description of the unit  and the counit of the adjunction.
For $\bA$ in $\cC^+$ they are given by morphisms
		\[ \eta_{\bA} \colon \bA \to \Fun_{\cC^{+}}^+(\bB,\bA \otimes \bB) \quad \text{and} \quad \epsilon_{\bA} \colon \Fun_{\cC^{+}}^+(\bB,\bA) \otimes \bB \to \bA \]
		defined as follows: \begin{enumerate} \item 
		The morphism $\eta_{\bA}$ takes an object $A$ in $\bA$ to the functor sending an object $B$ in $\bB$ to $(A,B)$ and a morphism $b$ in $\bB$ to $(\id_A,b)$.
		 A morphism $a \colon A \to A'$ is sent by $\eta_{\bA}$ to the natural transformation $\{ (a,\id_B) \colon (A,B) \to (A',B) \}_{B \in B}$.
		 \item The morphism $\epsilon_A$ is induced by evaluation of functors.
	\end{enumerate}
	One checks that $\eta$ and $\epsilon$ are natural transformations. One furthermore 
	 checks the triangle identities by explicit calculations. 
	 \end{proof}

  Recall that	for $\bA$ and $\bB$ in $\cC^{+}$ the category  $\Fun^{+}_{\cC^{+}}(\bA,\bB)^{+}$ is a groupoid.
Let $G$ be a groupoid. From \cref{gioergegergreg}  we get natural  isomorphisms
\begin{equation} \label{egiherigergregergr1}  	 
	 \Fun_{\Groupoids}(G, \Fun^{+}_{\cC^{+}}(\bA,\bB)^{+})\cong \Fun_{\cC^{+}}^{+}(\bA\sharp G,\bB)^{{+}}\cong  \Fun^{+}_{\cC^{+}}(\bA,\Fun^{+}_{\cC^{+}}(Q(G),\bB))^{{+}}
	 \end{equation}
 	 
In order to define the tensor structure  of   $\cC^{+}$ with simplicial sets 
   we consider the  fundamental groupoid functor. 
   \begin{ddd}\label{fewiohfwiof2323r23r}  The  \emph{fundamental groupoid} functor $\Pi$  is defined as the left-adjoint  of the adjunction
\[\Pi \colon \sSet\leftrightarrows \Groupoids \colon \Nerve\ ,\]
where $\Nerve$ takes the nerve of a groupoid.\end{ddd}
Explicitly, the fundamental groupoid $\Pi(K)$ of a simplicial set  $K$ is the groupoid freely generated by the path  category $P(K)$ of $K$. The category $P(K)$ in turn is given as follows:
\begin{enumerate}
\item The objects of $P(K)$ are the $0$-simplices.
\item The morphisms of $P(K)$ are generated by the $1$-simplices of $K$ subject to the relation
$g\circ f\sim h$ if there exists a $2$-simplex $\sigma$ in $K$ with
$d_{2} \sigma=f$, $d_{0}\sigma=g$ and $d_{1}\sigma=h$.
\end{enumerate}

 Using the tensor and cotensor structure with groupoids we define the corresponding structures with simplicial sets by pre-composition with the {fundamental}-groupoid functor. Recall the definition   \eqref{rthoir3terhtrhrth} of $Q$.
 
\begin{ddd}\label{grighergregregeree}We define \emph{tensor and cotensor structures} on $\cC^{+}$ with simplicial sets   by
	\begin{equation*}
\cC^{+}\times \sSet\to \cC^{+}\ , \quad (\bA,K)\mapsto \bA\sharp \Pi(K)\ .
\end{equation*}
\begin{equation*}
\sSet^{op}\times \cC^{+}  \to \cC^{+}\ , \quad (K,\bB )\mapsto  \Fun_{\cC^{+}}^{+}(Q(\Pi(K)),\bB)\ .\qedhere
\end{equation*}

In order to simplify notation, we will usually write $\bA\sharp K$ instead of $ \bA\sharp \Pi(K)$
and $\bB^{K}$ instead of $\Fun_{\cC^{+}}^{+}(Q(\Pi(K)),\bB)$. \end{ddd}

  \cref{gioergegergreg}  has the following corollary obtained by applying the nerve functor and using  \cref{ergeiorge4tgergregergreg} of the simiplicial mapping sets in $\cC^{+}$.
 \begin{kor}\label{efiuwehfiwefew23r23r32r}
 For $K$ in $\sSet$ and $\bA$, $\bB$ in $\cC^{+}$
	 we have natural isomorphisms of simplicial sets
	 \[ \Map_{\sSet}(K,\Map_{\cC^{+}}(\bA,\bB))\cong \Map_{\cC^{+}}(\bA\sharp K,\bB)\cong  \Map_{\cC^{+}}(\bA, \bB^{K})\ .\]
	 \end{kor}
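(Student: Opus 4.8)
The plan is to unwind all the definitions and reduce the statement to the chain of natural isomorphisms \eqref{egiherigergregergr1} together with one compatibility between the nerve functor and the internal mapping spaces of simplicial sets. Recall that by \cref{ergeiorge4tgergregergreg} we have $\Map_{\cC^{+}}(\bA,\bB) = \Nerve(\Fun^{+}_{\cC^{+}}(\bA,\bB)^{+})$, that $\bA\sharp K$ abbreviates $\bA\sharp \Pi(K)$ and $\bB^{K}$ abbreviates $\Fun^{+}_{\cC^{+}}(Q(\Pi(K)),\bB)$ (\cref{grighergregregeree}), and that the simplicial mapping space $\Map_{\sSet}(K,L)$ has $n$-simplices $\Hom_{\sSet}(\Delta^{n}\times K,L)$.

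First I would establish an auxiliary statement: for every groupoid $H$ and every simplicial set $K$ there is a natural isomorphism of simplicial sets $\Map_{\sSet}(K,\Nerve(H))\cong \Nerve(\Fun_{\Groupoids}(\Pi(K),H))$. One checks this level-wise. For the left-hand side, using the adjunction $\Pi\dashv\Nerve$ of \cref{fewiohfwiof2323r23r}, the set $\Hom_{\sSet}(\Delta^{n}\times K,\Nerve(H))$ is naturally isomorphic to $\Hom_{\Groupoids}(\Pi(\Delta^{n}\times K),H)$, hence to $\Hom_{\Groupoids}(\Pi(\Delta^{n})\times\Pi(K),H)$ once we know that $\Pi$ preserves finite products, and further to $\Hom_{\Groupoids}(\Pi(\Delta^{n}),\Fun_{\Groupoids}(\Pi(K),H))$ by cartesian closedness of $\Groupoids$. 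For the right-hand side, $\Nerve(G)_{n}=\Hom_{\sSet}(\Delta^{n},\Nerve(G))\cong\Hom_{\Groupoids}(\Pi(\Delta^{n}),G)$ for any groupoid $G$, again by $\Pi\dashv\Nerve$; taking $G=\Fun_{\Groupoids}(\Pi(K),H)$ matches the two computations, and the identifications are visibly natural in $K$ (contravariantly) and in $H$ and compatible with the simplicial operators.

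Granting this, the corollary follows by assembling the pieces. The functor category $\Fun^{+}_{\cC^{+}}(\bA,\bB)^{+}$ is a groupoid, so the auxiliary statement applied with $H=\Fun^{+}_{\cC^{+}}(\bA,\bB)^{+}$ gives $\Map_{\sSet}(K,\Map_{\cC^{+}}(\bA,\bB))\cong\Nerve\bigl(\Fun_{\Groupoids}(\Pi(K),\Fun^{+}_{\cC^{+}}(\bA,\bB)^{+})\bigr)$. Specializing \eqref{egiherigergregergr1} to the groupoid $G=\Pi(K)$ yields natural isomorphisms $\Fun_{\Groupoids}(\Pi(K),\Fun^{+}_{\cC^{+}}(\bA,\bB)^{+})\cong\Fun^{+}_{\cC^{+}}(\bA\sharp\Pi(K),\bB)^{+}\cong\Fun^{+}_{\cC^{+}}(\bA,\Fun^{+}_{\cC^{+}}(Q(\Pi(K)),\bB))^{+}$. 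Applying $\Nerve$ and using \cref{ergeiorge4tgergregergreg} together with the abbreviations of \cref{grighergregregeree} identifies the three terms with $\Map_{\cC^{+}}(\bA\sharp K,\bB)$ and $\Map_{\cC^{+}}(\bA,\bB^{K})$, which gives the asserted chain of isomorphisms; naturality in $\bA$, $\bB$ and $K$ is inherited from that of \eqref{egiherigergregergr1} and of the auxiliary statement.

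The only nonformal ingredient is the claim that the fundamental groupoid functor $\Pi$ preserves finite products (equivalently $\Pi(\Delta^{n}\times K)\cong\Pi(\Delta^{n})\times\Pi(K)$), and I expect this to be the main obstacle. It can be handled either directly from the combinatorial description of $\Pi$ recalled after \cref{fewiohfwiof2323r23r}, or by factoring $\Pi$ through geometric realization, which preserves finite products into compactly generated spaces, followed by the topological fundamental groupoid, which is well known to send products to products. Everything else is bookkeeping with adjunctions and nerves.
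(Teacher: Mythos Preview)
Your proposal is correct and follows the same route the paper indicates: apply the nerve functor to the groupoid-level isomorphisms \eqref{egiherigergregergr1}, using \cref{ergeiorge4tgergregergreg}. You have simply spelled out what ``applying the nerve functor'' entails, namely the auxiliary identification $\Map_{\sSet}(K,\Nerve(H))\cong\Nerve(\Fun_{\Groupoids}(\Pi(K),H))$, and correctly isolated the one nonformal input (that $\Pi$ preserves finite products), which the paper leaves implicit.
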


  We consider a commutative square
\begin{equation}\label{2oihvwvwewv}
\xymatrix{
			\bA\ar[r]^{i}\ar[d]_{f} & \bB\ar[d]^{g} \\
			\bC\ar[r]^{j} & \bD
		}
\end{equation}
		in $\cC^{+}$.

\begin{lem}\label{lem:push.cofib.pull.fib}
	\ 
	\begin{enumerate}
		\item \label{giorgergergerge99} If \eqref{2oihvwvwewv}
		is a pushout and $i$ is a trivial cofibration, then $j$ is a trivial cofibration.
		\item\label{giorgergergerge999} If \eqref{2oihvwvwewv}
			is a pullback and $g$ is a trivial fibration, then $f$ is a trivial fibration.
	\end{enumerate}
\end{lem}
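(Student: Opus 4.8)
The cofibration and surjectivity halves are essentially free. Since $\Ob\colon\cC^{+}\to\mathbf{Set}$ preserves all limits and colimits (inspect the constructions of (co)limits in \cref{wrfwweffewf}), in the situation of part~(\ref{giorgergergerge99}) the set of objects of $\bD$ is the pushout $\Ob(\bB)\sqcup_{\Ob(\bA)}\Ob(\bC)$; as pushouts of injections in $\mathbf{Set}$ are injections, $j$ is injective on objects, hence a cofibration. In the situation of part~(\ref{giorgergergerge999}) the set of objects of $\bA$ is $\Ob(\bC)\times_{\Ob(\bD)}\Ob(\bB)$, and since $g$, being a trivial fibration, is surjective on objects by \cref{hgionhbaiovnioaghiphn}, for every object $c$ of $\bC$ we may choose an object $b$ of $\bB$ with $g(b)=j(c)$; then $(c,b)$ is an object of $\bA$ mapping to $c$, so $f$ is surjective on objects. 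What remains in each case is to show that $j$, resp.\ $f$, is a weak equivalence.

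For part~(\ref{giorgergergerge99}) the plan is to transport a strong deformation retraction of $i$ through the pushout. As extracted in the proof of \cref{fiuehfieufwfewfwef}, the trivial cofibration $i$ admits a morphism $s\colon\bB\to\bA$ with $s\circ i=\id_{\bA}$ and a marked natural isomorphism $u\colon i\circ s\Rightarrow\id_{\bB}$ with $u_{i(a)}=\id_{i(a)}$ for all $a$ in $\bA$. The morphisms $\id_{\bC}$ and $f\circ s\colon\bB\to\bC$ agree after precomposition with $i$, so the universal property of the pushout produces $t\colon\bD\to\bC$ with $t\circ j=\id_{\bC}$ and $t\circ g=f\circ s$. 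Whiskering $u$ with $g$ gives a marked natural isomorphism $g\ast u\colon(j\circ t)\circ g\Rightarrow g$ (using $(j\circ t)\circ g=g\circ i\circ s$), while on $\bC$ one has the identity natural isomorphism $(j\circ t)\circ j=j\Rightarrow j$; these two agree after precomposition with $i$, resp.\ $f$, because $u_{i(-)}=\id$ and $g\circ i=j\circ f$. Now a marked natural isomorphism between two morphisms $\bD\to\bD$ is the same datum as a morphism $\bD\to\Fun^{+}_{\cC^{+}}(\bbI^{+}_{\cC^{+}},\bD)$ lying over the chosen source and target, and by the hom--tensor adjunction of \cref{gioergegergreg} together with the symmetry of $\otimes$ the functor $\Fun^{+}_{\cC^{+}}(-,\bD)$ sends colimits of $\cC^{+}$ to limits; applied to $\bD=\bB\sqcup_{\bA}\bC$ this glues the two natural isomorphisms above to a marked natural isomorphism $w\colon j\circ t\Rightarrow\id_{\bD}$ (its components are $g$-images of marked isomorphisms of $\bB$, or identities, hence marked isomorphisms of $\bD$, so $w$ really is marked). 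Thus $t$ is an inverse of $j$ up to marked isomorphism, $j$ is a weak equivalence, and together with the first paragraph $j$ is a trivial cofibration.

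For part~(\ref{giorgergergerge999}) the plan is to reduce to the known model structures on $\Cat$ and $\Groupoids$ by means of \cref{lem:markedequivs}. Both $\cF_{\All}\colon\cC^{+}\to\Cat$ and $(-)^{+}\colon\cC^{+}\to\Groupoids$ preserve limits ($\cF_{\All}$ because $\cF_{+}$ and $\cF_{\Z}$ are right adjoints; $(-)^{+}$ by the description of markings on limits in \cref{wrfwweffewf}), so applying them to the pullback square exhibits $\cF_{\All}(f)$ and $f^{+}$ as base changes of $\cF_{\All}(g)$ and $g^{+}$. Now $\cF_{\All}(g)$ is a weak equivalence in $\Cat$ that is surjective on objects, hence a trivial fibration for the folk model structure; and $g^{+}$ is an equivalence of groupoids (\cref{lem:markedequivs}) that is an isofibration (since $g$, being a fibration, is a marked isofibration by \cref{wfeiweiofewfewfewf}), hence a trivial fibration for the canonical model structure on $\Groupoids$. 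As trivial fibrations are stable under base change in any model category, $\cF_{\All}(f)$ and $f^{+}$ are equivalences, so $f$ is a weak equivalence by \cref{lem:markedequivs}; being also surjective on objects, $f$ is a trivial fibration by \cref{hgionhbaiovnioaghiphn}.

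The one genuinely delicate step is the gluing in part~(\ref{giorgergergerge99}): one must realize ``(marked) natural isomorphism between two endomorphisms of $\bD$'' as a morphism into $\Fun^{+}_{\cC^{+}}(\bbI^{+}_{\cC^{+}},\bD)$, exploit that this construction is a right adjoint in the variable $\bD$, and at the same time keep the marking of the glued natural transformation under control. Note that in part~(\ref{giorgergergerge99}) one cannot instead push forward along $\cF_{\All}$ and $(-)^{+}$ as in part~(\ref{giorgergergerge999}), because in the pre-additive case neither functor preserves pushouts --- the underlying category of a coproduct of pre-additive categories is not the coproduct of the underlying categories.
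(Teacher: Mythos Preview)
Your proof is correct. For part~\ref{giorgergergerge99}, your argument is essentially the paper's own: the paper encodes the marked natural isomorphism as a morphism $\bD\sharp\bbI_{\Cat}\to\bD$ and uses that the tensor $-\sharp\bbI_{\Cat}$ preserves pushouts, whereas you pass to the adjoint side and encode it as a morphism $\bD\to\Fun^{+}_{\cC^{+}}(\bbI^{+}_{\cC^{+}},\bD)$, using the universal property of the pushout in the source variable. These are two phrasings of the same step; note that the gluing only needs $\Hom_{\cC^{+}}(-,X)$ to take the pushout to a pullback, so your detour through the internal hom sending colimits to limits is not strictly required.

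For part~\ref{giorgergergerge999} you take a genuinely different route. The paper simply says to dualize part~\ref{giorgergergerge99}, i.e.\ to build a section $g'$ of $g$ with a marked isomorphism $g'g\simeq\id_{\bB}$ and then use that the cotensor $\Fun^{+}_{\cC^{+}}(\bbI^{+}_{\cC^{+}},-)$ preserves pullbacks to glue the witness for $f$. Your argument instead pushes the square through the limit-preserving functors $\cF_{\All}$ and $(-)^{+}$ and invokes stability of trivial fibrations under base change in the well-known model structures on $\Cat$ and $\Groupoids$, concluding via \cref{lem:markedequivs}. Both are clean; your version avoids repeating the explicit construction, at the cost of importing two external model structures, while the paper's dualization keeps the argument self-contained. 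Your closing remark explaining why this reduction is unavailable for part~\ref{giorgergergerge99} (since $\cF_{\Z}$, and hence $\cF_{\All}$, fails to preserve pushouts in the pre-additive case) is a nice complement.
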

 
\begin{proof}
We show Assertion \ref{giorgergergerge99}.  
	 	Because $i$ is a trivial cofibration, there exists a morphism $i' \colon \bB \to \bA$ such that $i' \circ i = \id_\bA$ and a marked isomorphism $u \colon i \circ i' \to \id_\bB$ satisfying $u \circ i = \id_i$.
	By the universal property of the push-out,  the morphism $f \circ i' \colon \bB \to \bC$ induces a morphism $j' \colon \bD \to \bC$ such that $j' \circ j = \id_\bC$.
	In particular, $j$ is a cofibration and it remains to show that it is a weak equivalence.
	Moreover, $g \circ u$ provides a marked isomorphism $j \circ f \circ i' = g \circ i \circ i' \to g$.
	
	The functor $-\sharp \bbI_{\Cat} \colon \cC^{+}\to \cC^{+}$  (see \cref{efuweifo24frergergreg} for $\bbI_{\Cat}$ in $\Groupoids$)
	  is a left-adjoint by \cref{gioergegergreg}. Therefore  it preserves pushouts.  
	  Using the first isomorphism in   \eqref{egiherigergregergr1} and the fact that $\bbI_{\Cat}$ is the morphism classifier in $\Groupoids$, 
	we consider  the natural transformation  $g \circ u$ as a functor $\bB \sharp \bbI_{\Cat} \to \bD$. Together with the functor $\bC\sharp \bbI_{\Cat}\to \bD$ corresponding  to the identity natural transformation of $j$,    by the universal property of  the push-out diagram $ \eqref{2oihvwvwewv} \sharp \bbI_{\Cat}$ we obtain an induced functor $\bD \sharp \bbI_{\Cat} \to \bD$ which provides, by a converse application of  the first isomorphism in   \eqref{egiherigergregergr1},
	a marked isomorphism $j \circ j' \to \id_\bD$. This proves that $j$ is a weak equivalence.
	
	The proof of Assertion \ref{giorgergergerge999} can be obtained by dualizing the proof above.\end{proof}

The following proposition verifies the pushout-product axiom (M7).

\begin{prop}\label{foifjoewfefwefwef}Let $a \colon \bA \to \bB$ be a cofibration in $\cC^{+}$ and
	$i \colon X\to Y$ be a cofibration in $\sSet$.
	Then  \begin{equation}\label{dquihiduqwdqwd}
	(\bA\sharp  Y )\sqcup_{\bA\sharp X } (\bB\sharp  X )\to (\bB\sharp  Y )
	\end{equation}
	is a cofibration. Moreover, if $i$ or $a$ is in addition a weak equivalence, then
	\eqref{dquihiduqwdqwd} is a weak equivalence.
\end{prop}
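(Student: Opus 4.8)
The plan is to treat the cofibration claim and the weak-equivalence claim separately, reducing the latter — via the identification of $-\sharp K$ with a tensor against the fixed object $Q(\Pi(K))$ of $\cC^{+}$ — to a pushout-product statement internal to $\cC^{+}$.

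For the cofibration claim, recall that the object functor $\Ob\colon\cC^{+}\to\mathbf{Set}$ preserves colimits: a right adjoint sends a set $S$ to the indiscrete category on $S$ with all isomorphisms marked, resp.\ in the pre-additive cases to the category on $S$ all of whose morphism groups are zero. Since moreover $\Ob(\bA\sharp K)=\Ob(\bA)\times K_{0}$ by the explicit description of $\sharp$ (\cref{defn_functor_sharp}, \cref{grighergregregeree}), applying $\Ob$ to \eqref{dquihiduqwdqwd} gives the pushout-product in $\mathbf{Set}$ of the injections $\Ob(a)$ and $i_{0}\colon X_{0}\to Y_{0}$, which is again injective. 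Hence \eqref{dquihiduqwdqwd} is injective on objects, i.e.\ a cofibration.

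For the weak-equivalence claim, write $\otimes$ for the monoidal structure on $\cC^{+}$ used in \cref{defn_functor_sharp} (the product on $\Cat^{+}$, the tensor product on $\preAdd^{+}$), so that $\bA\sharp K=\bA\otimes Q(\Pi(K))$ and \eqref{dquihiduqwdqwd} is exactly the $\otimes$-pushout-product of $a$ with $Q(\Pi(i))\colon Q(\Pi(X))\to Q(\Pi(Y))$. The functor $Q\circ\Pi$ sends cofibrations in $\sSet$ to cofibrations in $\cC^{+}$ — it is injective on $0$-simplices and $\Ob\circ Q=\Ob$, so it is injective on objects — and it sends trivial cofibrations in $\sSet$ to trivial cofibrations in $\cC^{+}$: a monomorphism which is a weak homotopy equivalence is carried by $\Pi$ to a functor that is injective on objects and an equivalence of groupoids (it is bijective on $\pi_{0}$ and on every automorphism group), and $Q$ is strong monoidal from $(\Groupoids,\times)$ to $(\cC^{+},\otimes)$ with $Q(\bbI_{\Cat})$ the (marked) isomorphism classifier of \cref{efuweifo24frergergreg}, hence carries an inverse up to natural isomorphism to an inverse up to marked isomorphism. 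It therefore suffices to show that the $\otimes$-pushout-product of a trivial cofibration with a cofibration in $\cC^{+}$ is a trivial cofibration: if $i$ is a weak equivalence we apply this to the trivial cofibration $Q(\Pi(i))$ and the cofibration $a$, and if $a$ is a weak equivalence we apply it — using that $\otimes$ is symmetric — to the trivial cofibration $a$ and the cofibration $Q(\Pi(i))$.

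To prove the internal statement, let $b\colon\bA_{0}\to\bA_{1}$ be a trivial cofibration and $c\colon\bB_{0}\to\bB_{1}$ a cofibration in $\cC^{+}$, set $P:=(\bA_{1}\otimes\bB_{0})\sqcup_{\bA_{0}\otimes\bB_{0}}(\bA_{0}\otimes\bB_{1})$ with canonical maps $\mu\colon\bA_{1}\otimes\bB_{0}\to P$ and $\lambda\colon\bA_{0}\otimes\bB_{1}\to P$, and let $q:=b\mathbin{\hat{\otimes}}c\colon P\to\bA_{1}\otimes\bB_{1}$, which is a cofibration by the first part. As recalled in the proof of \cref{fiuehfieufwfewfwef}, $b$ admits a retraction $b'\colon\bA_{1}\to\bA_{0}$ with $b'b=\id$ and a marked isomorphism $w\colon bb'\to\id$ with $w\circ b=\id_{b}$. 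Put $s:=\lambda\circ(b'\otimes\bB_{1})\colon\bA_{1}\otimes\bB_{1}\to P$. Then $q\circ s=(bb')\otimes\bB_{1}$, which $w\otimes\bB_{1}$ makes marked-isomorphic to the identity; and $s\circ q$ restricts to $\lambda$ on the summand $\bA_{0}\otimes\bB_{1}$ and to $\mu\circ((bb')\otimes\bB_{0})$ on the summand $\bA_{1}\otimes\bB_{0}$. Because $w\circ b=\id_{b}$, the constant homotopy on the first summand and the homotopy induced by $w$ on the second agree on the shared corner $\bA_{0}\otimes\bB_{0}$, so — using that $-\sharp\bbI_{\Cat}$ preserves the pushout $P$ (\cref{gioergegergreg}) and the translation between homotopies and marked natural isomorphisms given by the first isomorphism in \eqref{egiherigergregergr1}, exactly as in \cref{lem:push.cofib.pull.fib} — they glue to a marked isomorphism $s\circ q\to\id_{P}$. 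Hence $q$ is a weak equivalence, and thus a trivial cofibration. The main obstacle is precisely this gluing step — transporting the strong deformation retract data of $b$ through the pushout-product, where the normalisation $w\circ b=\id_{b}$ is what makes the two partial homotopies agree on the overlap — together with keeping track of the markings and, in the pre-additive case, of the enrichment; a secondary point requiring care is the strong monoidality of $Q$ and the identification $Q(\bbI_{\Cat})=\bbI^{+}_{\cC^{+}}$, which underlies the claim that $Q\circ\Pi$ preserves trivial cofibrations.
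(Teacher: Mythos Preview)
Your argument is correct, but it is organised differently from the paper's proof and does more work than necessary.

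For the cofibration part, both you and the paper observe that $\Ob$ preserves the pushout and that on objects the map is the set-theoretic pushout-product of injections; your remark that $\Ob$ has a right adjoint is a clean way to justify this.

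For the weak-equivalence part, the paper takes the standard one-variable-at-a-time route: it records as separate lemmas that $\bA\sharp(-)$ and $(-)\sharp K$ each preserve trivial cofibrations (\cref{reiofweiofweewf}, \cref{efwiuhfuihfewihfweiufhewfewfewfewfwefwf}), then in the pushout square for \eqref{dquihiduqwdqwd} it notes that one leg into the pushout is a trivial cofibration, that its cobase change is therefore a trivial cofibration (\cref{lem:push.cofib.pull.fib}), and concludes by two-out-of-three. You instead reduce to a full internal $\otimes$-pushout-product axiom in $\cC^{+}$ and prove that directly by transporting the strong-deformation-retract data of the trivial cofibration through the pushout, gluing the two partial homotopies via $(-)\sharp\bbI_{\Cat}$ exactly as in \cref{lem:push.cofib.pull.fib}. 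This is a genuinely different decomposition: you prove a stronger intermediate statement (the $\otimes$-pushout-product of a trivial cofibration with any cofibration is again a trivial cofibration), which buys you a monoidal model structure on $(\cC^{+},\otimes)$ essentially for free, whereas the paper's approach is shorter because two-out-of-three bypasses the explicit retract construction entirely. Your auxiliary observation that $Q$ is strong monoidal with $Q(\bbI_{\Cat})=\bbI^{+}_{\cC^{+}}$ is correct and is implicit in the paper's \cref{reiofweiofweewf}.
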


In the proof of this proposition we use the following two lemmas.

\begin{lem} \label{reiofweiofweewf}
	For $\bA$ in $ \cC$ the functor
	\[ \bA\sharp  - \colon\sSet\to \cC^{+} \]
	preserves (trivial) cofibrations.
\end{lem}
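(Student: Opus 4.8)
The plan is to exploit the factorization of $\bA\sharp-\colon\sSet\to\cC^{+}$, given by \cref{fewiohfwiof2323r23r} and \cref{defn_functor_sharp}, as the fundamental-groupoid functor $\Pi\colon\sSet\to\Groupoids$ followed by the functor $\Groupoids\to\cC^{+}$ carrying $G$ to $\bA\times Q(G)$ in the case $\Cat^{(+)}$, resp.\ to $\bA\otimes Q(G)$ in the case $\preAdd^{(+)}$. The cofibration part is then immediate: neither $\Pi$ nor $Q$ changes the set of objects (with $\Pi$ passing to the set of $0$-simplices), so the object set of $\bA\sharp K$ is $\Ob(\bA)\times K_{0}$ naturally in $K$; as a monomorphism $i\colon X\to Y$ of simplicial sets is injective on $0$-simplices, $\bA\sharp i$ is injective on objects, hence a cofibration in $\cC^{+}$.

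For the trivial-cofibration part I would first reduce to groupoids. If $i$ is moreover a weak equivalence, then $\Pi(i)\colon\Pi(X)\to\Pi(Y)$ is an equivalence of groupoids, since $\Pi$ sends weak homotopy equivalences to equivalences of groupoids (it is the left adjoint of a Quillen adjunction $\Pi\colon\sSet\leftrightarrows\Groupoids\colon\Nerve$ for the canonical model structure on $\Groupoids$, and every simplicial set is cofibrant). So it suffices to show: for every equivalence of groupoids $\varphi\colon G\to H$, the morphism $\bA\sharp\varphi\colon\bA\sharp G\to\bA\sharp H$ is a weak equivalence in $\cC^{+}$. To prove this I would choose a quasi-inverse $\psi\colon H\to G$ together with natural isomorphisms $\alpha\colon\psi\varphi\cong\id_{G}$ and $\beta\colon\varphi\psi\cong\id_{H}$, and use that $\bA\sharp-\colon\Groupoids\to\cC^{+}$ is $2$-functorial — it is assembled from the $2$-functors $\Groupoids\hookrightarrow\Cat$, $\ma$, $\Lin_{\Z}$ and $\bA\times-$, resp.\ $\bA\otimes-$ — so that $\bA\sharp\psi$ becomes inverse to $\bA\sharp\varphi$ up to the natural isomorphisms $\bA\sharp\alpha$ and $\bA\sharp\beta$.

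The only step that is not bookkeeping — and hence the main obstacle — is to check that $\bA\sharp\alpha$ and $\bA\sharp\beta$ are natural transformations \emph{by marked isomorphisms}, which is what a weak equivalence in $\cC^{+}$ requires (see \cref{gijoorgergerg}). I would do this componentwise: the component of $\bA\sharp\alpha$ at an object $(a,x)$ of $\bA\sharp G$ is the pair $(\id_{a},Q(\alpha)_{x})$ (resp.\ the tensor $\id_{a}\otimes Q(\alpha)_{x}$); here $\id_{a}$ is marked because the marked subgroupoid of $\bA$ is wide, $Q(\alpha)_{x}$ is marked in $Q(G)$ because it is the image of an isomorphism of the groupoid $G$ and $Q$ lands in (linearizations of) maximally marked groupoids, and a product, resp.\ tensor product, of marked morphisms is marked by the definition of the marking on the cartesian product and on $\otimes$ recalled in \cref{defn_functor_sharp}. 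The same applies to $\bA\sharp\beta$, so $\bA\sharp\varphi$ has an inverse up to marked isomorphism; together with the cofibration part this shows $\bA\sharp i$ is a trivial cofibration. The unmarked cases $\cC$ (and the case of a general object $\bA$ of $\cC^{+}$) follow from the identical argument. An alternative for the key step would be to invoke \cref{lem:markedequivs} and check instead that $\cF_{\All}(\bA\sharp\varphi)$ and $(\bA\sharp\varphi)^{+}$ are equivalences of categories and of groupoids, but the direct construction of the homotopy inverse seems shorter.
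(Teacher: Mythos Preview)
Your proposal is correct and follows essentially the same route as the paper's proof: reduce to the groupoid level via $\Pi$, observe that a cofibration of simplicial sets yields a map injective on objects, and for the trivial case transport an inverse equivalence of groupoids together with its natural isomorphisms through $\bA\sharp-$, checking componentwise that $(\id_{a},u_{x})$ (resp.\ $\id_{a}\otimes u_{x}$) is marked. The paper's argument is terser—it does not spell out the $2$-functoriality or the Quillen-adjunction justification for $\Pi$ preserving weak equivalences—but the underlying construction of the marked homotopy inverse is identical.
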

\begin{proof}
	If $i \colon X\to Y$ is a cofibration, then $\Pi(i)$ is injective on objects. This implies that
	$\bA\sharp  i $ is injective on objects.
	
	Assume now that $i $ is  in addition a weak equivalence. Then
	$\Pi(i)$ is an equivalence of groupoids. Let $j \colon \Pi(Y)\to \Pi(X)$ be an inverse equivalence and
	$u \colon j\circ \Pi(i)\to \id_{\Pi(X)}$ and $v \colon \Pi(i)\circ j\to \id_{\Pi(Y)}$ be the corresponding  isomorphisms.
	Then we get a marked isomorphism  
	\[ \bA\sharp u \colon (\bA\sharp j)\circ (\bA\sharp  i )\to \id_{\bA\sharp  X }\]
	by $(\bA\sharp u)_{(a,x)}:=(\id_{a},u_{x})$. 
	Similarly, we have a marked isomorphism
	\[ p \bA\sharp v \colon (\bA\sharp   i )\circ (\bA\sharp j) \to \id_{\bA\sharp  Y }\]
	given by $(\bA\sharp v)_{(a,x)}:=(\id_{a},v_{x})$.
\end{proof}

\begin{lem}\label{efwiuhfuihfewihfweiufhewfewfewfewfwefwf}
	For a  simplicial set  $K$, the functor
	\[ -\sharp K \colon \cC^{+}\to \cC^{+} \]
	preserves (trivial) cofibrations.
\end{lem}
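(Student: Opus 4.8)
The plan is to reduce the statement to Lemmas~\ref{reiofweiofweewf} and~\ref{gioergegergreg} by exploiting the fact that $-\sharp K$ has a right adjoint. First I would observe that for fixed $K$ in $\sSet$ the functor $-\sharp K \colon \cC^{+}\to \cC^{+}$ is, by \cref{grighergregregeree} and \cref{gioergegergreg}, a left adjoint (its right adjoint is $(-)^{K}=\Fun^{+}_{\cC^{+}}(Q(\Pi(K)),-)$, since $\bA\sharp K = \bA\sharp\Pi(K) = \bA\otimes Q(\Pi(K))$ in the pre-additive case, and $\bA\times Q(\Pi(K))$ in the $\Cat^{+}$ case). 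Hence $-\sharp K$ preserves all colimits, in particular pushouts and transfinite compositions, and it preserves retracts. Therefore, to show it preserves cofibrations it suffices to check that it sends a set of generating cofibrations to cofibrations; but since the cofibrations in $\cC^{+}$ are exactly the morphisms injective on objects (\cref{gijoorgergerg}) and are closed under pushout, transfinite composition, and retract, it is cleanest to argue directly: if $a\colon\bA\to\bB$ is injective on objects, then on objects $(\bA\sharp K)\to(\bB\sharp K)$ is the map $\Ob(\bA)\times\Ob(\Pi(K))\to\Ob(\bB)\times\Ob(\Pi(K))$ (since $\Ob(\bA\otimes\bC)=\Ob(\bA)\times\Ob(\bC)$ and likewise for $\times$), which is injective because $a$ is injective on objects and we are taking a product with a fixed set. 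So $-\sharp K$ preserves cofibrations.

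For the trivial case, suppose in addition $a$ is a weak equivalence; I must show $a\sharp K$ is a weak equivalence. Choose $a'\colon\bB\to\bA$ with $a'\circ a=\id_{\bA}$ together with marked isomorphisms $u\colon a'\circ a\to\id_{\bA}$ (in fact $=\id$) and $v\colon a\circ a'\to\id_{\bB}$ witnessing that $a$ is an equivalence; by \cref{fiuehfieufwfewfwef} and \cref{fweiojweoiffewfwefwef} we may even arrange $a'$ to be a section, though for a weak equivalence it is enough to take any homotopy inverse. Applying the functor $-\sharp K$ gives $a'\sharp K\colon\bB\sharp K\to\bA\sharp K$ with $(a'\sharp K)\circ(a\sharp K)=\id$. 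It remains to promote $u$ and $v$ to marked natural isomorphisms between the relevant endofunctors of $\bA\sharp K$ and $\bB\sharp K$. For this I would use the isomorphism \eqref{egiherigergregergr1}: a marked natural isomorphism between two functors $\bC\to\bD$ is the same datum as a functor $\bC\sharp\bbI_{\cC^{+}}^{+}\to\bD$ restricting appropriately, and since $-\sharp K$ preserves the relevant pushouts defining $\bC\sharp\bbI$ (or more directly, since $(\bC\sharp\bbI^{+})\sharp K\cong(\bC\sharp K)\sharp\bbI^{+}$ because $\sharp$ is associative up to the symmetry of $\otimes$/$\times$ on groupoid factors), the natural isomorphism $v$ transports to a marked natural isomorphism $(a\sharp K)\circ(a'\sharp K)\to\id_{\bB\sharp K}$, and similarly for $u$. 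Concretely, the transported isomorphism is $(v\sharp K)_{(b,x)} := (v_{b},\id_{x})$ for an object $(b,x)$ of $\bB\sharp K = \bB\otimes Q(\Pi(K))$ (respectively $\bB\times Q(\Pi(K))$), which is a marked isomorphism since $v_{b}$ is marked and identities are marked, and tensor products/products of marked isomorphisms are marked by the definition of the marking on $\otimes$ and on $\times$.

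The main obstacle is purely bookkeeping: verifying that the formulas $(v\sharp K)_{(b,x)} = (v_{b},\id_{x})$ (and the analogue for $u$) genuinely define natural transformations and that they are compatible with the enrichment in the pre-additive case. Naturality follows from naturality of $v$ in the $\bB$-variable and functoriality in the $Q(\Pi(K))$-variable, and markedness is immediate from the description of the marking on $\bA\otimes\bB$ and $\bA\times\bB$; in the pre-additive case one notes that $(v_{b},\id_{x})$ lands in the subgroup $\Hom_{\bB}(b,b')\otimes\Hom_{Q(\Pi(K))}(x,x)$ and the components assemble linearly. Since all of this is essentially the same computation already carried out in the proof of \cref{reiofweiofweewf} (with the roles of the two factors swapped), I would simply remark that the argument of \cref{reiofweiofweewf} applies mutatis mutandis, the only new input being that $-\sharp K$ is a left adjoint and hence preserves the pushouts, transfinite compositions and retracts used to generate cofibrations, together with the object-level injectivity computation above.
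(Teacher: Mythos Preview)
Your proof is correct and follows essentially the same approach as the paper: the cofibration case is handled by the direct object-level injectivity computation (objects of $\bA\sharp K$ are $\Ob(\bA)\times\Ob(\Pi(K))$), and the weak-equivalence case is handled by transporting the marked natural isomorphisms via $(v\sharp K)_{(b,x)}=(v_{b},\id_{x})$, which is exactly the mirror image of the formula in \cref{reiofweiofweewf}. The paper's proof is just a terse version of what you wrote, explicitly pointing back to \cref{reiofweiofweewf}; your detours through left adjoints, generating cofibrations, and the interval object $\bbI^{+}$ are correct but unnecessary for the argument.
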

\begin{proof}
	If $a \colon \bA\to \bB$ is a cofibration, then it is injective on objects. Then
	$a\sharp K$ is injective on objects and hence a cofibration.
	If $a$ is in addition a marked equivalence, then $a\sharp K$ is a marked  equivalence, too. The argument is similar to 
	the corresponding part of the argument  in the proof of \cref{reiofweiofweewf}.
\end{proof}

\begin{proof}[Proof of \cref{foifjoewfefwefwef}]
	Consider the diagram
	\[\xymatrix{\bA\sharp X \ar[rr]_{\bA\sharp i}\ar[dd]_{a\sharp  X }&&\bA\sharp  Y \ar[dd]^{a\sharp Y }\ar[dl]^-{b}\\&(\bA\sharp  Y )\sqcup_{\bA\sharp  X } (\bB\sharp  X )\ar[dr]^-{?}&\\\bB\sharp   X   \ar[rr]^{\bB\sharp i}\ar[ur]&&\bB\sharp  Y }\]
	The set of objects of the push-out is equal to  the push-out of the object sets. Hence it is easy to check that $?$ is injective on objects and thus a cofibration.
	
	Assume that $a$ is a weak equivalence.
	By \cref{efwiuhfuihfewihfweiufhewfewfewfewfwefwf} the maps $a\sharp X $ and $a\sharp  Y $ are trivial cofibrations.
	 {Since $b$ is a pushout of a trivial cofibration, it is a trivial cofibration by \cref{lem:push.cofib.pull.fib}.}
	It follows from the two-out-of-three property, see \cref{fwiowowfefwefwef334}, that	the morphism $?$ is a weak equivalence.
	
	The case that $i$ is a weak equivalence is similar using \cref{reiofweiofweewf} instead of \cref{efwiuhfuihfewihfweiufhewfewfewfewfwefwf}.
\end{proof}

\begin{lem}\label{cor:factorization1}
	 Every morphism in $\cC^+$ can be factored into a cofibration followed by a trivial fibration.
\end{lem}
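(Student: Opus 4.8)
The plan is to realize the factorization as a mapping cylinder, using the simplicial tensoring $-\sharp-$ of \cref{grighergregregeree} together with \cref{lem:push.cofib.pull.fib}. Let $f\colon\bA\to\bB$ be a morphism in $\cC^{+}$. Applying $\bA\sharp-$ to the two inclusions $\Delta^{0}\hookrightarrow\Delta^{1}$ of the vertices $0$ and $1$, and using the canonical identification $\bA\sharp\Delta^{0}\cong\bA$ (which holds in all four cases because $Q(\Delta^{0})$ is the monoidal unit for both $\times$ and $\otimes$), we obtain two maps $i_{0},i_{1}\colon\bA\to\bA\sharp\Delta^{1}$. I would then form the pushout
\[\xymatrix{\bA\ar[r]^-{i_{0}}\ar[d]_-{f}&\bA\sharp\Delta^{1}\ar[d]\\ \bB\ar[r]^-{k}&\mathrm{Cyl}(f)}\]
in $\cC^{+}$; it exists by \cref{wrfwweffewf}. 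Let $j\colon\bA\to\mathrm{Cyl}(f)$ be the composite of $i_{1}$ with the right-hand vertical map. A direct inspection of the pushout shows that its set of objects is $\Ob\bB\sqcup\Ob\bA$, with $k$ and $j$ the two summand inclusions (here one uses that $i_{0}$ is injective on objects); in particular $j$ is injective on objects, i.e.\ a cofibration.

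Next I would produce the retraction $p\colon\mathrm{Cyl}(f)\to\bB$. The identity $\id_{\bB}\colon\bB\to\bB$ together with the composite $\bA\sharp\Delta^{1}\to\bA\sharp\Delta^{0}=\bA\xrightarrow{f}\bB$ (collapsing $\Delta^{1}$ to a point) form a cocone on the pushout span, because both induced composites $\bA\to\bB$ equal $f$. The resulting map $p$ satisfies $p\circ k=\id_{\bB}$ and $p\circ j=f$; the second equality is exactly the factorization we want, so it remains to verify that $p$ is a trivial fibration.

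For that, I would first check that $i_{0}$ is a trivial cofibration: it is injective on objects, and the inclusion $\Delta^{0}\hookrightarrow\Delta^{1}$ of the vertex $0$ is a trivial cofibration in $\sSet$ (both simplices being contractible), so $i_{0}$ is a trivial cofibration by \cref{reiofweiofweewf}; equivalently, one checks directly that the projection $\bA\sharp\Delta^{1}\to\bA$ is an inverse of $i_{0}$ up to marked isomorphism. By the first assertion of \cref{lem:push.cofib.pull.fib}, the pushout $k$ of $i_{0}$ is then a trivial cofibration, in particular a weak equivalence. Since $p\circ k=\id_{\bB}$ is a weak equivalence, the two-out-of-three property (\cref{fwiowowfefwefwef334}) shows that $p$ is a weak equivalence; and $p$ is surjective on objects because $p\circ k=\id_{\bB}$. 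By \cref{hgionhbaiovnioaghiphn}, a weak equivalence that is surjective on objects is a trivial fibration. Hence $f=p\circ j$ is the required factorization.

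I do not expect any genuine obstacle. The only things needing care are the bookkeeping identifications — $\bA\sharp\Delta^{0}\cong\bA$, the description of $\Ob\mathrm{Cyl}(f)$, and the triangle identities $p\circ k=\id_{\bB}$ and $p\circ j=f$ — together with the remark that the whole argument runs uniformly over $\Cat,\Cat^{+},\preAdd,\preAdd^{+}$ even though $-\sharp-$ is modelled on the cartesian product in the categorical cases and on the tensor product $\otimes$ in the pre-additive ones; in the latter case it matters that the hom-groups of $Q(\Delta^{1})$ are free of rank one, so that tensoring with them leaves the hom-groups of $\bA$ unchanged.
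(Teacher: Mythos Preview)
Your proposal is correct and is essentially the same argument as the paper's: both construct the mapping cylinder as the pushout of $\bA\xrightarrow{i_{0}}\bA\sharp\Delta^{1}$ along $f$, show the inclusion of $\bB$ is a trivial cofibration via \cref{reiofweiofweewf} and \cref{lem:push.cofib.pull.fib}, and conclude that the retraction to $\bB$ is a trivial fibration using two-out-of-three and \cref{hgionhbaiovnioaghiphn}. The paper merely organizes the pushout as a composite of two squares (via $\bA\sqcup\bB$), but the content is identical.
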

\begin{proof}
 Let $a \colon \bA \to \bB$ be a morphism in $\cC^+$.
 Denote by $i_1 \colon \bA \cong \bA \sharp \Delta^0 \to \bA \sharp \partial\Delta^1$ the morphism induced by the map classifying the vertex $1$, and let $j \colon \bA \sharp \partial\Delta^1 \to \bA \sharp \Delta^1$ be the morphism induced by the inclusion $\partial\Delta^1 \to \Delta^1$. Consider the diagram
 \[\xymatrix{
  \bA\ar[r]^-{i_0}\ar[d]_-{a} & \bA \sharp \partial\Delta^1\ar[r]^-{j}\ar[d] & \bA \sharp \Delta^1\ar[d] \\
  \bB\ar[r]^-{e_\bB} & \bA \sqcup \bB\ar[r]^-{b} & Z(a)
 }\]
 in which $e_\bB$ is the canonical morphism, and in which the right square is defined to be a push-out.
 Since $\bA \sharp \partial\Delta^1 \cong \bA \sqcup \bA$, it is easy to see that the left square is also a push-out.
 Hence, the outer square is also a push-out.
 By the universal property of the push-out, the composed morphism
 \[ \bA\sharp  \Delta^{1} \xrightarrow{a\sharp\Delta^{1}}\bB\sharp \Delta^{1}  \xrightarrow{\pr_{\bB}} \bB \]
 and the identity on $\bB$ induce a morphism $q \colon Z(a) \to \bB$ such that $q \circ b = \id_\bB$.
 In particular, $q$ is surjective on objects.
 Moreover, $b \circ e_\bB$ is a trivial cofibration by \cref{reiofweiofweewf} and \cref{lem:push.cofib.pull.fib}.\ref{giorgergergerge99}.
 The two-out-of-three property (\cref{fwiowowfefwefwef334}) implies that $q$ is a weak equivalence, and hence a trivial fibration by \cref{hgionhbaiovnioaghiphn}.
 
 Since the structure morphism $e_\bA \colon \bA \to \bA \sqcup \bB$ is a cofibration, the morphism $a' \colon \bA \to Z(a)$ is also a cofibration.
 Regarding $Z(a)$ as the push-out of the right square, it follows from the universal property that $q \circ (b \circ e_\bA) = a$,
 and thus provides the required factorization. 
\end{proof}

Let $\bA$ be an object of $ \cC^{+}$. Recall the notation $\bA^{K}$ for a simplicial set $K$ from \cref{grighergregregeree}.

\begin{lem}\label{lem:power.fib}
{The  functor
		\[  \bA^{(-)} \colon \sSet^{op} \to \cC^{+} \]
		sends (trivial) cofibrations to (trivial) fibrations.}
\end{lem}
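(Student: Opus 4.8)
The plan is to obtain this as a formal consequence of the pushout-product axiom \cref{foifjoewfefwefwef} together with the fact that every object of $\cC^{+}$ is fibrant (\cref{fiowefwefewfewf}). First I would extract the underlying natural bijection of morphism sets from the tensor--cotensor adjunction: combining \cref{gioergegergreg} (applied with $Q(\Pi(K))$ in place of $\bB$) with \cref{defn_functor_sharp,grighergregregeree} yields, for every simplicial set $K$ and all objects $\bD$, $\bA$ of $\cC^{+}$, a bijection $\Hom_{\cC^{+}}(\bD \sharp K, \bA) \cong \Hom_{\cC^{+}}(\bD, \bA^{K})$ natural in $\bD$, $\bA$ and $K$. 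Using this, together with the existence of pushouts in $\cC^{+}$ (\cref{wrfwweffewf}), a lifting problem of a morphism $j \colon \bC \to \bD$ in $\cC^{+}$ against $\bA^{i} \colon \bA^{Y} \to \bA^{X}$, for a morphism $i \colon X \to Y$ in $\sSet$, transposes into the problem of extending the induced morphism $(\bC \sharp Y) \sqcup_{\bC \sharp X} (\bD \sharp X) \to \bA$ along the canonical map $(\bC \sharp Y) \sqcup_{\bC \sharp X} (\bD \sharp X) \to \bD \sharp Y$; equivalently, into a lifting problem of this last map against the morphism $\bA \to \ast$ to the terminal object. Here commutativity of the original square corresponds precisely to the two transposed morphisms $\bC \sharp Y \to \bA$ and $\bD \sharp X \to \bA$ agreeing on $\bC \sharp X$, so that they do assemble into a morphism out of the pushout.

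Granting this reformulation, both halves of the lemma drop out of \cref{foifjoewfefwefwef} (read with $\bC \to \bD$ and $X \to Y$ in the roles of its $a$ and $i$), which tells us the displayed pushout-product map is always a cofibration, and is a trivial cofibration whenever $j$ or $i$ is a weak equivalence. For the first half I let $i$ be a cofibration in $\sSet$ and $j$ a trivial cofibration in $\cC^{+}$; then the pushout-product map is a trivial cofibration, so since $\bA$ is fibrant the required extension exists, and hence $\bA^{i}$ has the right lifting property against all trivial cofibrations, i.e.\ is a fibration. For the second half I let $i$ be a trivial cofibration in $\sSet$ and $j$ an arbitrary cofibration in $\cC^{+}$; the pushout-product map is again a trivial cofibration, so the same argument shows $\bA^{i}$ has the right lifting property against every cofibration of $\cC^{+}$.

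To pass from this last lifting property to the conclusion that $\bA^{i}$ is a trivial fibration, I would factor $\bA^{i} = q \circ c$ with $c$ a cofibration and $q$ a trivial fibration (\cref{cor:factorization1}); lifting $c$ against $\bA^{i}$ presents $\bA^{i}$ as a retract of $q$, and hence $\bA^{i}$ is a trivial fibration by closure of fibrations and of weak equivalences under retracts (\cref{wfeoifjowefwefewfw}). The only step that needs genuine care is the transposition in the first paragraph, and specifically the naturality of the adjunction in the simplicial-set variable $K$, which is what makes $\bD \sharp i$ and $\bA^{i}$ into adjoint morphisms; the remaining steps are straightforward invocations of the quoted results.
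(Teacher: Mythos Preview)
Your argument is correct and follows essentially the same route as the paper's one-line proof, only spelled out in full. The paper invokes the adjunction of \cref{gioergegergreg} together with \cref{reiofweiofweewf} and \cref{efwiuhfuihfewihfweiufhewfewfewfewfwefwf} and leaves the lifting-property check to the reader; you instead cite the already-packaged pushout-product axiom \cref{foifjoewfefwefwef} (which is itself proved from those two lemmas) and make the transposition explicit. Your closing retract argument via \cref{cor:factorization1} to pass from ``RLP against all cofibrations'' to ``trivial fibration'' is standard and introduces no circularity, since \cref{cor:factorization1} does not rely on \cref{lem:power.fib}.
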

\begin{proof}
This follows from  \cref{gioergegergreg}, \cref{efwiuhfuihfewihfweiufhewfewfewfewfwefwf} and \cref{reiofweiofweewf} by explicitly checking lifting properties.
\end{proof}

\begin{lem}\label{cor:factorization2}
Every morphism in $\cC^+$ can be factored into a trivial cofibration followed by a fibration.
\end{lem}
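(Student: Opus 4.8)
The plan is to run the mapping path space construction, which is the formal dual of the mapping cylinder used in \cref{cor:factorization1}. Fix a morphism $a\colon\bA\to\bB$ in $\cC^+$ and recall the cotensor notation $\bB^K$ from \cref{grighergregregeree}. First I would record the maps attached to $\Delta^1$: the two vertex inclusions $\{0\},\{1\}\hookrightarrow\Delta^1$ induce maps $p_0,p_1\colon\bB^{\Delta^1}\to\bB^{\Delta^0}\cong\bB$, and the projection $\Delta^1\to\Delta^0$ induces $c\colon\bB\cong\bB^{\Delta^0}\to\bB^{\Delta^1}$ with $p_0\circ c=p_1\circ c=\id_\bB$. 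Since the vertex inclusion $\{1\}\hookrightarrow\Delta^1$ is a trivial cofibration in $\sSet$, \cref{lem:power.fib} shows that $p_1$ is a trivial fibration.

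Next I would let $W(a)$ be the pullback of $a\colon\bA\to\bB$ along $p_1\colon\bB^{\Delta^1}\to\bB$ (which exists by \cref{wrfwweffewf}), with projections $\pi\colon W(a)\to\bA$ and $\rho\colon W(a)\to\bB^{\Delta^1}$, and set $r:=p_0\circ\rho\colon W(a)\to\bB$. There is a canonical section $s\colon\bA\to W(a)$ determined by $\pi\circ s=\id_\bA$ and $\rho\circ s=c\circ a$ (legitimate since $p_1\circ c\circ a=a$), and it satisfies $r\circ s=p_0\circ c\circ a=a$. The claim is then that $s$ is a trivial cofibration and $r$ is a fibration, so that $a=r\circ s$ is the desired factorisation.

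For $s$ the argument is formal: it is injective on objects because $\pi\circ s=\id_\bA$, hence a cofibration; and $\pi$, being a pullback of the trivial fibration $p_1$, is itself a trivial fibration by \cref{lem:push.cofib.pull.fib}.\ref{giorgergergerge999} and in particular a weak equivalence, so the two-out-of-three property (\cref{fwiowowfefwefwef334}) applied to $\pi\circ s=\id_\bA$ shows that $s$ is a weak equivalence. For $r$ I would invoke \cref{wfeiweiofewfewfewf} and instead check that $r$ is a marked isofibration in the sense of \cref{wfiojowefewfewfew}. Unwinding the pullback together with the description of $\bB^{\Delta^1}$, an object of $W(a)$ is a triple $(A,b,\beta)$ with $A$ in $\bA$, $b$ in $\bB$, and $\beta\colon b\to a(A)$ a marked isomorphism of $\bB$, and $r$ sends it to $b$; a morphism $(A,b,\beta)\to(A',b',\beta')$ is a pair $(g\colon A\to A',\,h\colon b\to b')$ with $\beta'\circ h=a(g)\circ\beta$, it is marked precisely when both $g$ and $h$ are marked (by the description of limits in $\cC^+$ from \cref{wrfwweffewf}), and $r$ sends it to $h$. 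Given such an object $(A,b,\beta)$ and a marked isomorphism $u\colon b\to b'$ in $\bB$, the pair $(\id_A,u)$ is a marked isomorphism $(A,b,\beta)\to(A,b',\beta\circ u^{-1})$ in $W(a)$ mapping to $u$ under $r$, which verifies the marked isofibration condition.

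The routine parts — the existence of the path object, the adjunctions producing $p_0,p_1,c$, and the two-out-of-three bookkeeping for $s$ — are all supplied by earlier results. The step I expect to require the most care, although it remains elementary, is the explicit identification of the objects, morphisms and, crucially, the marking of the pullback $W(a)$ used to check that $r$ is a marked isofibration; the subtle point is that a morphism of the pullback is marked only when all of its components are marked, which is exactly why the lift $(\id_A,u)$ above is a marked isomorphism.
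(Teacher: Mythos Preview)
Your argument is correct and follows essentially the same mapping path space construction as the paper: the object $W(a)$ coincides with the paper's $P(a)$ (both are the pullback of $a$ along the evaluation $\bB^{\Delta^1}\to\bB$ at one endpoint), and the verification that $s$ is a trivial cofibration is identical. The only difference is in checking that $r$ is a fibration: the paper stays formal, observing that $(\ev_0,\ev_1)\colon\bB^{\Delta^1}\to\bB\times\bB$ is a fibration by \cref{lem:power.fib} (applied to the cofibration $\partial\Delta^1\to\Delta^1$), so its pullback $q$ is a fibration, and then composes with the projection $p_\bB$; you instead unwind the pullback explicitly and verify the marked isofibration condition by hand via \cref{wfeiweiofewfewfewf}. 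Both are fine---the paper's route avoids the explicit description of $W(a)$, while yours makes the lift completely concrete.
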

\begin{proof}
Let $a \colon \bA \to \bB$ be a morphism in $\cC^+$.
Denote by $(\ev_0,\ev_1) \colon \bB^{\Delta^1} \to \bB^{\partial\Delta^1} \cong \bB \times \bB$ the morphism induced by the canonical inclusion $\partial\Delta^1 \to \Delta^1$.
Let $p_1 \colon \bB^{\partial\Delta^1} \cong \bB \times \bB \to \bB$ denote the projection on the second factor (which corresponds to the vertex $1$), and let $p_\bA \colon \bA \times \bB \to \bA$ be the projection.
Consider the diagram
\[\xymatrix@C=4em{
 P(a)\ar[r]^-{q}\ar[d] & \bA \times \bB\ar[r]^-{p_\bA}\ar[d]^-{a \times \id_\bB} & \bA\ar[d]^-{a} \\
 \bB^{\Delta^1}\ar[r]^-{(\ev_0,\ev_1)} & \bB \times \bB\ar[r]^-{p_1} & \bB
}\]
in which the left square is defined to be a pull-back.
Since the right square is also a pull-back, the outer square is a pull-back, too.
By the universal property of the pull-back, the composed morphism
\[ \bA \xrightarrow{a} \bB \xrightarrow{\const} \bB^{\Delta^1} \]
and the identity on $\bA$ induce a morphism $i \colon \bA \to P(a)$ such that $p_\bA \circ q \circ i = \id_\bA$.
In particular, $i$ is a cofibration.
Since $\ev_1 = p_1 \circ (\ev_0,\ev_1)$ is a trivial fibration by \cref{lem:power.fib}, it follows from \cref{lem:push.cofib.pull.fib}.\ref{giorgergergerge999} that $p_\bA \circ q$ is a trivial fibration.
The two-out-of-three property (\cref{fwiowowfefwefwef334}) implies that $i$ is a weak equivalence, and thus a trivial cofibration.

Note that $q$ is a fibration since it is the pullback of a fibration (use again \cref{lem:power.fib} and \cref{lem:push.cofib.pull.fib}.\ref{giorgergergerge999}).
Since the structure morphism $p_\bB \colon \bA \times \bB \to \bB$ is a fibration, the morphism $p_\bB \circ q \colon P(a) \to \bB$ is also a fibration.
Regarding $P(a)$ as the pull-back of the left square, it follows from the universal property that $(p_\bB \circ q) \circ i = a$,
and thus provides the required factorization. 
  \end{proof}

We thus have finished the verification of the model category axioms (M1) to (M7).  

\begin{rem}
By considering the full embedding $\ma \colon \cC\to \cC^{+}$, we obtain a verification of the axioms in the unmarked case.
\end{rem}

We next describe the generating cofibrations and the generating trivial cofibrations.

Recall that by \cref{fweiojweoiffewfwefwef} and \cref{wfeiweiofewfewfewf} we can take \[J:=\{\Delta_{\cC^{+}}^{0}\to \bbI^{+}_{\cC^{+}} \}\]
as the generating trivial cofibrations for $\cC^{+}$.

\begin{rem}
The set of generating trivial cofibrations for $\cC$ is given by 
\[J:=\{\Delta^{0}_{\cC}\to \bbI_{\cC} \}\ .\qedhere\]
\end{rem}

 We  furthermore define
\[ I:=J\cup \{U,V,V^{+},W,W^{+}\} \]
where $U,V,V^{+},W,W^{+}$ are cofibrations defined as follows (see \cref{efuweifo24frergergreg}):
\begin{enumerate}
	\item $U \colon \emptyset\to \Delta_{\cC^{+}}^{0}$.
	\item We let $V \colon \Delta_{\cC^{+}}^{0}\sqcup \Delta_{\cC^{+}}^{0}\to \Delta^{1}_{\cC^{+}}$ classify the pair of objects $(0,1)$ of   $ \Delta^{1}_{\cC^{+}}$.  
	\item 
		We let $V^{+} \colon \Delta_{\cC^{+}}^{0}\sqcup \Delta_{\cC^{+}}^{0}\to \bbI^{+}_{\cC^{+}}$ classify the pair of objects $(0,1)$ of   $\bbI^{+}_{\cC^{+}}$. 
		   \item We define $P$ as the push-out
	\[\xymatrix{\Delta_{\cC^{+}}^{0}\sqcup \Delta_{\cC^{+}}^{0}\ar[r]^-V\ar[d]^{V}& \Delta^{1}_{\cC^{+}}\ar[d]\\ \Delta^{1}_{\cC^{+}}\ar[r]&P}\]
	and let $W \colon P\to  \Delta^{1}_{\cC^{+}}$ be the obvious map induced by $\id_{ \Delta^{1}_{\cC^{+}}}$.   \item We define $P^{+}$ as the push-out
	\[\xymatrix{\Delta_{\cC^{+}}^{0}\sqcup \Delta_{\cC^{+}}^{0}\ar[r]^-{V^{+}}\ar[d]^{V^{+}}&\bbI^{+}_{\cC^{+}}\ar[d]\\\bbI^{+}_{\cC^{+}}\ar[r]&P^{+}}\]
	and let $W^{+} \colon P^{+}\to \bbI^{+}_{\cC^{+}}$ be the obvious map induced by $\id_{\bbI^{+}_{\cC^{+}}}$.
\end{enumerate}

\begin{lem}
	The trivial  fibrations in $\cC^{+}$ are exactly the morphisms which have the right-lifting property with respect to $I$.
	 \end{lem}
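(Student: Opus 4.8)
The plan is to show that a morphism $f \colon \bC \to \bD$ in $\cC^{+}$ has the right lifting property (RLP) with respect to $I$ if and only if it is a trivial fibration, where $I = J \cup \{U, V, V^{+}, W, W^{+}\}$. Recall from \cref{hgionhbaiovnioaghiphn} that a weak equivalence is a trivial fibration precisely when it is surjective on objects, and from \cref{fweiojweoiffewfwefwef} together with \cref{wfeiweiofewfewfewf} that $J$ detects (marked iso)fibrations. So the strategy is to decode what lifting against each generator in $\{U, V, V^{+}, W, W^{+}\}$ says in elementary terms, and match this with the conditions ``surjective on objects'', ``full'', ``faithful'', and ``detects marked isomorphisms''.

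First I would treat the ``only if'' direction: if $f$ is a trivial fibration, then it has RLP against $J$ (it is a fibration), it is surjective on objects by \cref{hgionhbaiovnioaghiphn} (which gives RLP against $U$), and since it is a weak equivalence that is surjective on objects, \cref{hgionhbaiovnioaghiphn}(2) gives RLP against all cofibrations, in particular against $V, V^{+}, W, W^{+}$. For the ``if'' direction I would argue as follows. RLP against $U \colon \emptyset \to \Delta^{0}_{\cC^{+}}$ means exactly that $f$ is surjective on objects. RLP against $V \colon \Delta^{0}_{\cC^{+}} \sqcup \Delta^{0}_{\cC^{+}} \to \Delta^{1}_{\cC^{+}}$ means: given objects $C, C'$ of $\bC$ and a morphism $\beta \colon f(C) \to f(C')$ in $\bD$, there is a morphism $C \to C'$ in $\bC$ mapping to $\beta$ — that is, $f$ is full (on the relevant hom-sets); similarly RLP against $V^{+}$ gives fullness on marked isomorphisms, i.e. $f$ is surjective on marked isomorphisms between fibers of objects. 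RLP against $W \colon P \to \Delta^{1}_{\cC^{+}}$ means: given two morphisms $C \to C'$ in $\bC$ with the same image under $f$, they are equal — that is, $f$ is faithful; and RLP against $W^{+}$ gives the analogous injectivity statement for marked isomorphisms, which combined with the fullness from $V^{+}$ shows that $f$ induces a bijection on marked isomorphisms, so in particular $f$ detects marked isomorphisms.

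Having established that $f$ is surjective on objects, full, and faithful, I would conclude that $f$ is fully faithful and essentially surjective, hence an equivalence of underlying categories; applying \cref{lem:markedequivs} (using also that $f$ detects marked isomorphisms, so $f^{+}$ is an equivalence of groupoids) shows $f$ is a weak equivalence in $\cC^{+}$. Since it is also surjective on objects, \cref{hgionhbaiovnioaghiphn} identifies it as a trivial fibration. In the pre-additive case one checks that fullness/faithfulness on hom-\emph{groups} follows from the same lifting arguments applied to the linearized classifiers $\Delta^{1}_{\preAdd^{+}}$ etc., with no extra work beyond remembering that the relevant maps are now $\Ab$-enrichment preserving.

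The main obstacle I expect is the bookkeeping in decoding the lifting problems against the pushout objects $P$ and $P^{+}$: one must unwind their universal properties carefully to see that $W$ encodes precisely the faithfulness condition (two parallel arrows with equal image are equal) rather than some weaker or stronger statement, and to verify that the marked/enriched variants behave as claimed. This is routine but requires attention; everything else reduces cleanly to the already-established \cref{hgionhbaiovnioaghiphn} and \cref{lem:markedequivs}.
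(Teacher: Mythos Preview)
Your proposal is correct and follows essentially the same approach as the paper: both reduce the claim to \cref{hgionhbaiovnioaghiphn} (trivial fibration $\Leftrightarrow$ weak equivalence $+$ surjective on objects) and \cref{lem:markedequivs}, then decode each generator as the corresponding elementary condition ($U$ $\leftrightarrow$ surjectivity on objects, $V,W$ $\leftrightarrow$ full/faithful, $V^{+},W^{+}$ $\leftrightarrow$ bijective on marked isomorphisms). Your write-up is simply more explicit about the two directions and the role of $J$, which the paper leaves implicit since lifting against $\{U,V,W,V^{+},W^{+}\}$ already forces the trivial fibration property.
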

\begin{proof}
A trivial fibration is a weak equivalence which is in addition surjective on objects by \cref{hgionhbaiovnioaghiphn}.  

We first observe that lifting with respect to $U$ exactly corresponds to the surjectivity on objects.

We now use the characterization of weak equivalences given in \cref{lem:markedequivs}.
 Lifting with respect to $V$ and $W$ corresponds to surjectivity and injectivity on morphisms, and
	lifting with respect to $V^{+}$ and $W^{+}$ corresponds to surjectivity and injectivity on marked isomorphisms. 
	  \end{proof}

\begin{lem}
	The objects $\emptyset$, $\Delta_{\cC^{+}}^{0}$, $\Delta^{1}_{\cC^{+}}$, $\bbI_{\cC}^{+}$, 
	$P$  and $P^{+}$ are   compact. \end{lem}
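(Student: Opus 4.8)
The plan is to prove that each of the listed objects $X$ is compact in the usual sense that the set-valued functor $\Hom_{\cC^{+}}(X,-)$ preserves filtered colimits. Since compact objects are closed under finite colimits (a standard fact for locally presentable categories), and since $P$ and $P^{+}$ are, by construction, pushouts of $\Delta^{1}_{\cC^{+}}$, resp.\ $\bbI^{+}_{\cC^{+}}$, along the finite coproduct $\Delta^{0}_{\cC^{+}}\sqcup\Delta^{0}_{\cC^{+}}$, it suffices to treat $\emptyset$, $\Delta^{0}_{\cC^{+}}$, $\Delta^{1}_{\cC^{+}}$ and $\bbI^{+}_{\cC^{+}}$.

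Next I would identify the corepresented functors of these four objects. Using the universal properties of the classifiers recorded in \cref{efuweifo24frergergreg} together with the free/forgetful adjunctions $\mi\dashv\cF_{+}\dashv\ma$ and $\Lin_{\Z}\dashv\cF_{\Z}$, one checks that $\Hom_{\cC^{+}}(\emptyset,-)$ is the constant functor with value a one-point set; that $\Hom_{\cC^{+}}(\Delta^{0}_{\cC^{+}},-)\cong\Ob\circ\cF_{\All}$ (the set of objects of the underlying category); that $\Hom_{\cC^{+}}(\Delta^{1}_{\cC^{+}},-)\cong\Mor\circ\cF_{\All}$ (the set of morphisms of the underlying category); and that $\Hom_{\cC^{+}}(\bbI^{+}_{\cC^{+}},-)\cong\Mor\circ(-)^{+}$ (the set of marked isomorphisms). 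Thus compactness of these four objects reduces to the assertion that $\cF_{\All}\colon\cC^{+}\to\Cat$ and $(-)^{+}\colon\cC^{+}\to\Groupoids$ preserve filtered colimits, combined with the standard facts that $\Ob$ and $\Mor$ on $\Cat$, and $\Mor$ on $\Groupoids$, preserve filtered colimits (they are corepresented by the finitely presentable objects $\Delta^{0}_{\Cat}$, $\Delta^{1}_{\Cat}$, resp.\ the free-living isomorphism groupoid, inside the locally finitely presentable categories $\Cat$ and $\Groupoids$).

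The one non-formal step, and hence the main obstacle, is to show that $\cF_{\All}$ and $(-)^{+}$ commute with filtered colimits; I would deduce this from the explicit construction of colimits in $\cC^{+}$ in the proof of \cref{wrfwweffewf}. On underlying categories this amounts to the statement that a filtered colimit in $\preAdd$ (resp.\ $\Cat^{+}$) has as underlying category the filtered colimit of the underlying categories: for $\preAdd$ this holds because filtered colimits of abelian groups are computed on underlying sets, and for the marking it holds because forgetting a marking is a left adjoint and hence preserves all colimits. For the groupoid of marked isomorphisms one uses that the marking on a colimit is, by definition, the subgroupoid \emph{generated} by the images of the markings of the diagram, but over a filtered index category this generated subgroupoid contains nothing beyond those images, since any finite composite of such images already lies in the marking of a single term of the diagram. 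Granting these two preservation properties, each of the four corepresented functors above is a composite of functors preserving filtered colimits, hence preserves filtered colimits, and the lemma follows. I expect the bookkeeping with the generated marking in the filtered case to be the only point requiring a little care, everything else being formal.
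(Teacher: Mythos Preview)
Your argument is correct, but it takes a considerably more elaborate route than the paper. The paper's proof is a single sentence: for $\cC=\Cat$ these are finite (marked) categories, and for $\cC=\preAdd$ they have finitely many objects and finitely generated abelian morphism groups, which immediately implies compactness. In other words, the paper argues by direct inspection that each object is finitely presentable in the relevant enriched sense, rather than reducing along the various adjunctions to known compact objects in $\Cat$ and $\Groupoids$.

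Your approach has the virtue of making explicit exactly which structural facts are doing the work: the classifier properties of \cref{efuweifo24frergergreg}, the adjunctions $\mi\dashv\cF_{+}$ and $\Lin_{\Z}\dashv\cF_{\Z}$, and the behaviour of filtered colimits in $\cC^{+}$ from \cref{wrfwweffewf}. The paper's approach, by contrast, is terse and leaves to the reader the (standard but nontrivial) fact that categories with finitely many objects and finitely presented hom-sets/groups are $\aleph_{0}$-presentable. One small remark: your parenthetical ``a standard fact for locally presentable categories'' is slightly out of order, since local presentability of $\cC^{+}$ is only established later in \cref{ewdfoijfowefewfewfw}; however, closure of compact objects under finite colimits holds in any category with the relevant colimits, because finite limits commute with filtered colimits in $\mathbf{Set}$, so no circularity arises.
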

\begin{proof}
If $\cC=\Cat$,  then they are finite categories.
If $\cC=\preAdd$, then they have finitely many objects and finitely generated abelian morphism groups. This implies the assertion.
	 \end{proof}

\begin{rem}
In the unmarked case,
we can take the set of generating cofibrations
\[ I:=J\cup \{U,V,W\} \]
with the following definitions:
\begin{enumerate}
	\item $U \colon \emptyset\to \Delta_{\cC}^{0}$.
	\item We let $V \colon \Delta_{\cC}^{0}\sqcup \Delta_{\cC }^{0}\to \Delta^{1}_{\cC}$ classify the pair of objects $(0,1)$ of   $ \Delta^{1}_{\cC}$.  
	 		   \item We define $P$ as the push-out
	\[\xymatrix{\Delta_{\cC}^{0}\sqcup \Delta_{\cC}^{0}\ar[r]^-V\ar[d]^{V}& \Delta^{1}_{\cC}\ar[d]\\ \Delta^{1}_{\cC}\ar[r]&P}\]
	and let $W \colon P\to  \Delta^{1}_{\cC}$ be the obvious map induced by $\id_{ \Delta^{1}_{\cC}}$.   
\end{enumerate}

The objects  $\emptyset$, $\Delta_{\cC}^{0}$, $\Delta^{1}_{\cC}$, $\bbI_{\cC}$ 
	 and $P$ are    compact.
\end{rem}

\begin{kor}\label{fiowejofwefewfewf}
	The model category $\cC^{+}$ is cofibrantly generated by finite sets of generating cofibrations and trivial cofibrations between  {compact} objects. 
\end{kor}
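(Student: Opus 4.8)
The strategy is to match the two finite sets $I$ and $J$ introduced above against the definition of a cofibrantly generated model category (\cite[Def.~2.1.17]{hovey}, \cite[Def.~11.1.2]{MR1944041}). Concretely, one has to verify three things: (i) the domains of the morphisms in $I$ are small relative to the relative $I$-cell complexes, and likewise for $J$; (ii) a morphism is a trivial fibration if and only if it has the right lifting property with respect to $I$; and (iii) a morphism is a fibration if and only if it has the right lifting property with respect to $J$. All three are essentially already available, so the proof is bookkeeping.

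Statement (ii) is exactly the lemma proved just above, characterising trivial fibrations by lifting against $I$. Statement (iii) follows by combining \cref{fweiojweoiffewfwefwef}, which identifies the morphisms with the right lifting property against the single generator $\Delta^0_{\cC^+}\to\bbI^+_{\cC^+}$ of $J$ with the marked isofibrations, and \cref{wfeiweiofewfewfewf}, which says marked isofibrations and fibrations coincide. For (i) I would observe that every domain — indeed every domain and codomain — of a morphism in $I\cup J$ occurs among $\emptyset$, $\Delta^0_{\cC^+}$, $\Delta^1_{\cC^+}$, $\bbI^+_{\cC^+}$, $P$ and $P^+$, which the preceding lemma shows to be compact; a compact object of a cocomplete category (and $\cC^+$ is cocomplete by \cref{wrfwweffewf}) is small relative to every class of morphisms, since its corepresented functor preserves filtered colimits and in particular the transfinite compositions of pushouts of maps in $I$ or $J$ occurring in the small object argument. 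This establishes (i)–(iii), hence $\cC^+$ is cofibrantly generated; by construction $I$ and $J$ are finite and all their members have compact (co)domains, which is the refinement asserted. The unmarked case of $\cC$ follows verbatim from the unmarked sets $I$ and $J$ recorded in the preceding remark.

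There is no real obstacle here: each nontrivial ingredient — the lifting characterisations of the two classes of fibrations and the compactness of the small models — has already been proved, and the remaining saturation statements (that $I$-cofibrations are exactly the cofibrations and $J$-cofibrations the trivial cofibrations) are formal consequences via \cref{fweiowefwefewffewf} and the factorisations \cref{cor:factorization1} and \cref{cor:factorization2}. The only point deserving a word of care is the passage ``compact $\Rightarrow$ small'', which is standard in a cocomplete category and could alternatively be deduced from local presentability of $\cC^+$.
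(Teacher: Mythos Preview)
Your proposal is correct and matches the paper's implicit reasoning: the corollary is stated without proof precisely because the two preceding lemmas (the lifting characterisation of trivial fibrations against $I$, and the compactness of the relevant objects) together with \cref{fweiojweoiffewfwefwef} and \cref{wfeiweiofewfewfewf} for $J$ already contain everything needed. One small slip: the domain $\Delta^0_{\cC^+}\sqcup\Delta^0_{\cC^+}$ of $V$ and $V^+$ is not literally in the list you cite, but it is compact as a finite coproduct of compact objects, so the argument goes through unchanged.
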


\begin{prop}\label{ewdfoijfowefewfewfw}
	The  category $\cC^{+}$ is locally presentable.
\end{prop}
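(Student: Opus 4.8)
The plan is to deduce local presentability from the Gabriel--Ulmer criterion (see \cite{Borceux}): a cocomplete category which admits a strong generator consisting of compact (that is, $\omega$-presentable) objects is locally finitely presentable, and in particular locally presentable. Cocompleteness of $\cC^{+}$ has already been established in \cref{wrfwweffewf}, so it remains to exhibit a strong generator by compact objects, and the natural candidates are among the classifiers of \cref{efuweifo24frergergreg}. (In the unmarked case the analogous argument with the generator $\{\Delta^{0}_{\cC},\Delta^{1}_{\cC}\}$ recovers the classical fact that $\Cat$ and $\preAdd$ are locally presentable.)

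Concretely, I would take $\mathcal{G}:=\{\Delta^{0}_{\cC^{+}},\Delta^{1}_{\cC^{+}},\bbI^{+}_{\cC^{+}}\}$. These three objects are compact; this is the content of the lemma preceding \cref{fiowejofwefewfewf}, and alternatively it follows from the facts that $\cF_{\All}\colon\cC^{+}\to\Cat$ and $\Ob\colon\Cat\to\mathbf{Set}$ preserve filtered colimits and that filtered colimits in $\cC^{+}$ carry the filtered colimit of the markings as their marking. Next I would unwind the universal properties of the three classifiers to obtain, naturally in an object $\bA$ of $\cC^{+}$, an identification of the set of morphisms $\Delta^{0}_{\cC^{+}}\to\bA$ in $\cC^{+}$ with $\Ob(\bA)$, of the set of morphisms $\Delta^{1}_{\cC^{+}}\to\bA$ with the underlying set of morphisms of $\bA$ (the marking condition being vacuous on the source), and of the set of morphisms $\bbI^{+}_{\cC^{+}}\to\bA$ with the set of marked isomorphisms of $\bA$. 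Thus the induced functor $E\colon\cC^{+}\to\mathbf{Set}\times\mathbf{Set}\times\mathbf{Set}$ records the action of a morphism on objects, on morphisms, and on marked isomorphisms.

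It then remains to verify that $E$ is faithful and conservative. Faithfulness is immediate: a morphism of $\cC^{+}$ is determined by its effect on objects and on morphisms. For conservativity, one observes that if all three components of $E(\phi)$ are bijections, then $\phi$ induces an isomorphism of underlying categories --- respectively pre-additive categories, using that $\phi$ preserves the enrichment --- and, since $\phi$ is in addition bijective on marked isomorphisms, the inverse underlying functor again preserves markings, so $\phi$ is an isomorphism in $\cC^{+}$. Hence $\mathcal{G}$ is a strong generator and the criterion applies.

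I expect conservativity of $E$ to be the only point that needs real care: one must be sure that including the \emph{separate} set of marked isomorphisms among the test objects is exactly what forces the inverse of a ``bijective-on-everything'' morphism to respect the marking --- dropping the generator $\bbI^{+}_{\cC^{+}}$ makes $E$ fail to be conservative, since for instance $\mi(\bA)\to\ma(\bA)$ is sent to a triple of bijections. For $\preAdd^{+}$ there is an alternative argument: it is the strict pullback $\Cat^{+}\times_{\Cat}\preAdd$ along the functors $\cF_{+}$ and $\cF_{\Z}$, both of which admit left adjoints and among which $\cF_{+}$ is an isofibration, so that this strict pullback is again locally presentable by stability of locally presentable categories under such pullbacks.
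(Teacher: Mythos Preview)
Your argument is correct and, in outline, follows the same Gabriel--Ulmer criterion that the paper invokes via \cite[Thm.~1.20]{AR}. The difference lies in how the strong generator property is verified. The paper takes a more constructive route: it passes through an auxiliary category $\Quivers^{+}$ of marked directed graphs, uses the free--forgetful adjunction $\Free_{\Cat^{+}}\dashv\cF_{\circ}$, and exhibits every object of $\cC^{+}$ explicitly as a coequalizer of free objects on the generators $\{*,\ \bullet\to\bullet,\ \bullet\xrightarrow{+}\bullet\}$. (After applying $\Free_{\Cat^{+}}$, respectively $\Lin_{\Z}\circ\Free_{\Cat^{+}}$, these are precisely your $\Delta^{0}_{\cC^{+}}$, $\Delta^{1}_{\cC^{+}}$, $\bbI^{+}_{\cC^{+}}$, so the generating sets coincide.) You instead check directly that the associated functor $E$ to $\mathbf{Set}^{3}$ is faithful and conservative, which is the more immediate form of the criterion. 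Your approach is shorter and avoids the detour through marked graphs and the coequalizer presentation; the paper's approach has the mild advantage of producing an explicit colimit presentation of every object, which can be useful elsewhere. Your conservativity check is the one genuinely nontrivial step, and your remark that dropping $\bbI^{+}_{\cC^{+}}$ breaks it (via $\mi(\bA)\to\ma(\bA)$) is exactly the right diagnostic. The alternative pullback argument you sketch for $\preAdd^{+}$ is also sound but is not the route the paper takes.
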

\begin{proof}
	Since we have already shown that $\cC^{+}$ is cocomplete, by \cite[Thm  1.20]{AR} it suffices to show that $\cC^{+}$ has a strong generator consisting of  compact objects. For this it suffices to show that there exists a set of compact objects such that every other object of $\cC$ is isomorphic to a colimit of a diagram with values in this set, see  \cite[Lemma 11.4]{bunke}. We will call such a set strongly generating.	
	 	
	We will first show that $\Cat^{+}$ is strongly generated by a finite set of compact objects.
	 We consider the category $\Quivers^{+}$ of marked directed graphs. It consists of directed graphs  with     distinguished subsets of edges called marked edges. Morphisms in $\Quivers^{+}$
	must preserve marked edges.
	The category $\Quivers^{+}$   is locally presentable by \cite[Thm  1.20]{AR}. Indeed, it is cocomplete and strongly  generated by the objects in the list 
	\[\{*\ , \bullet \to \bullet, \bullet\xrightarrow{+} \bullet\}\ .\]  We have a forgetful functor from $\Cat^{+}$ to marked  directed graphs which fits into an adjunction
	\[\Free_{\Cat^{+}}\colon\Quivers^{+}\leftrightarrows \Cat^{+}\colon\cF_{\circ}\ .\] 
	The left adjoint takes the free category on the marked directed graph and
	localizes at the marked isomorphisms.
	The counit of the adjunction  provides  
	a canonical morphism
	\[v_{\bA} \colon \bF(\bA):=\Free_{\Cat^{+}}(\cF_{\circ}(\bA))\to \bA\]
	of marked categories.  
	
	{Consider the pullback
		\[\xymatrix{
		\bF(\bA) \times_{\bA}\bF(\bA)\ar[r]^-{p_1}\ar[d]_{p_2}&\bF(\bA)\ar[d]^{v_\bA}\\
		\bF(\bA)\ar[r]^-{v_\bA}&\bA	
}\]
We claim that the diagram}
\begin{equation*}
\begin{tikzcd}
\bF(\bA) \times_{\bA}\bF(\bA) \ar[r, shift left, "p_1"]\ar[r, shift right, "p_2"']&\bF(\bA)\ar[r, "v_\bA"]&\bA
\end{tikzcd}
\end{equation*}
is a coequalizer. We have $v_\bA\circ p_1=v_\bA\circ p_2$ by definition. That every morphism $f\colon \bF(\bA)\to \bB$ with $f\circ p_1=f\circ p_2$ factors uniquely through $v_\bA$ follows from the fact that $v_\bA$ is surjective on objects and full.
	
	 	We know that $\bF(\bA)$ is isomorphic to a colimit of a small diagram involving the  
		list of finite categories
	\begin{equation*}
\{ \Free_{\Cat^{+}} (*)\ ,  \Free_{\Cat^{+}}(\bullet \to \bullet),  \Free_{\Cat^{+}}(\bullet\xrightarrow{+} \bullet)\}\ . 
\end{equation*} 
	The fiber product over $\bA$ is not a colimit. But we have a surjection 
	\[v'_\bA=v_{\bF(\bA) \times_{\bA}\bF(\bA)}\colon \bF( \bF(\bA) \times_{\bA}\bF(\bA))\to  \bF(\bA) \times_{\bA}\bF(\bA)\]
	and therefore a  {coequalizer diagram}
\begin{equation*}
\begin{tikzcd}
\bF(\bF(\bA) \times_{\bA}\bF(\bA)) \ar[r, shift left, "p_1\circ v'_\bA"]\ar[r, shift right, "p_2\circ v'_\bA"']&\bF(\bA)\ar[r, "v_\bA"]&\bA.
\end{tikzcd}
\end{equation*}	
	The marked category $ \bF( \bF(\bA) \times_{\bA}\bF(\bA))$ is again a colimit of a diagram involving the generators in the list above. Hence $\bA$ itself is a colimit of a diagram built from this list.	
	
	 	 A similar argument applies in the  case $ \preAdd^{+}$.
		In this case we must replace $\cF_{\circ}$ by $\cF_{\circ}\circ \cF_{\Z}$ and $\Free_{\Cat^{+}}$ by
		$\Lin_{\Z}\circ \Free_{\Cat^{+}}$. The list of generators is 
\begin{equation*}
\mathclap{
\{ \Lin_{\Z}(\Free_{\Cat} (*))\ ,  \Lin_{\Z}(\Free_{\Cat}(\bullet \to \bullet))\ ,  \Lin_{\Z}(\Free_{\Cat}(\bullet\xrightarrow{+} \bullet))\}\ . 
}
\end{equation*}
These categories are again compact since they have finitely many objects and their morphism groups are 	finitely generated.
\end{proof}

\begin{rem}
In order to show that $\Cat$ and $\preAdd$ are locally presentable one argues similarly using the category of directed graphs $\Quivers$ and the adjunctions
\[\Free_{\Cat} \colon \Quivers\leftrightarrows \Cat \colon \cF_{\circ}\ , \quad \Lin_{\Z}\circ \Free_{\Cat} \colon \Quivers\leftrightarrows \preAdd \colon \cF_{\Z}\circ \cF_{\circ}\ .\qedhere\]
\end{rem}

\subsection{(Marked) additive categories as fibrant objects}\label{gioegregreget34t34t34t34t}

In \cref{vgioeoerberebg} we have shown that the simplicial categories $\preAdd$ and $\preAdd^{+}$ are locally presentable and have a  simplicial, cofibrantly generated model category structures. In the present section we introduce Bousfield localizations of these categories whose categories of fibrant objects are exactly the additive categories or marked additive categories.

Let $\bA$ be a pre-additive category.
\begin{ddd}\label{rioehgjoifgregregegergeg}
	We say that $\bA$ is \emph{additive} if $\bA$ has a zero object and the sum, see \cref{vgeroihirovervbervevev}, of any two objects of $\bA$ exists.
\end{ddd}
  
  We let $\Add$ denote the full subcategory of $\preAdd$ of additive categories.
    
  \begin{rem}
 In contrast to being a pre-additive category, being an additive category is a property of a category.  
 In the following we describe the conditions for an additive category just in terms of category language. First of all  we require the existence of a zero object which by definition is an object which is both initial and final.  Furthermore  we require  the  existence of finite products and coproducts, and that the natural transformation
 \[ -\sqcup-  \to  -\times-\]
 of  bifunctors (its definition uses the zero object)  is an isomorphism. This leads naturally to an enrichment over commutative monoids. Finally we require that the morphism monoids are in fact abelian groups. 
 
 A  morphism between additive categories can be characterized as   a functor which preserves products.
 It then preserves sums, zero objects, and the enrichment automatically. Here one can also interchange the roles of sums and products. 
 
  Therefore $\Add$ can be considered as a (non-full) subcategory of $\Cat$.
     \end{rem}

 Let $(\bA,\bA^{+})$ be a marked pre-additive category.
 \begin{ddd}\label{reiuheriververvec}
 $(\bA,\bA^{+})$ is a \emph{marked additive category} if the following conditions are satisfied:
 \begin{enumerate}
 \item The underlying category $\bA$ is additive.
 \item\label{fdblkgjklrgregergergerg} $\bA^{+}$ is closed under  sums.   \qedhere \end{enumerate}
 \end{ddd}
In detail,  Condition \ref{fdblkgjklrgregergergerg}
 means that
 for every two  morphisms $a \colon A\to A^{\prime}$ and $b \colon B\to B^{\prime}$ in $\bA^{+}$ the induced isomorphism $a\oplus b \colon A\oplus B\to A^{\prime}\oplus B^{\prime}$ (for any choice of objects and structure maps representing the sums) also belongs to $\bA^{+}$.
 
In \cref{rgvoihifowefwfwe} below we will discuss a natural example of a marked pre-additive category
in which the Condition \ref{fdblkgjklrgregergergerg} is violated.

 \begin{ex}
 A category $\bC$ with cartesian products can be refined to a symmetric monoidal category with the cartesian symmetric monoidal structure \cite[Sec.~2.4.1]{HA}. In particular we have a functor (uniquely defined up to unique isomorphism)
 \[ -\times- \colon \bC\times \bC \to \bC\ .\]
 This applies to  an additive category $\bA$ where the cartesian product is denoted by $\oplus$.
 We therefore have a sum functor  
 \[ -\oplus - \colon \bA\times \bA\to \bA\ .\]
 Note that $\bA\times \bA$ (the product is taken in $\preAdd$) is naturally an additive category again, and that the sum functor is a morphism of additive categories.

 If $(\bA,\bA^{+})$ is now a marked additive category, then $(\bA,\bA^{+})\times (\bA,\bA^{+})$ (the product is taken in $\preAdd^{+}$)  is marked again, and Condition \ref{reiuheriververvec}.\ref{fdblkgjklrgregergergerg} 
 implies that we also have a functor
 \[ -\oplus - \colon (\bA,\bA^{+})\times (\bA,\bA^{+})\to(\bA,\bA^{+}) \]
 between marked additive categories.
\end{ex}

 We want to reformulate the characterization of (marked) additive categories from \cref{rioehgjoifgregregegergeg} and \cref{reiuheriververvec} as a right-lifting property.
To this end we introduce  the pre-additive categories
$\bS_{\preAdd}$ and $\emptyset_{\preAdd}$ in $\preAdd$ given  as follows:
\begin{enumerate}
	\item The pre-additive category $\bS_{\preAdd}$ has three objects   $1$, $2$, and $S$ 
	and the morphisms are generated by  the morphisms
	 	\[\{ 1\xrightarrow{i_{1}}S ,2\xrightarrow{i_{2}}S, S\xrightarrow{p_{1}}1,S\xrightarrow{p_{2}}2 \}\ .\]
	 subject to the following relations:
	\[ \quad p_{1}\circ i_{1}=\id_{1}\ , \quad p_{2}\circ i_{2}=\id_{2}\ , \quad i_{1}\circ p_{1} +i_{2}\circ p_{2}=\id_{S}\ .\]
	\item  $\emptyset_{\preAdd}$ has one object $0$ and $\Hom_{\emptyset_{\preAdd}}(0,0)=\{\id_0\}$. Note that $\id_0$ is the zero morphism.
 \end{enumerate}

We further define the marked versions
\[ \bS_{\preAdd^{+}}:=\mi(\bS_{\preAdd})\ , \quad \emptyset_{\preAdd^{+}}:=\mi(\emptyset_{\preAdd}) \]
in $\preAdd^{+}$ by marking the identities.

 In the following	 let $\cC$ be a place holder for  $\preAdd$ or $\preAdd^{+} $.

\begin{rem}\label{fiofjoiwefwfwefewfw}   We consider the object $\bS_{\cC}$ of $\cC$.
	Note that the relations $p_{1}\circ i_{2}=0$ and $p_{2}\circ i_{1}=0$ are implied.  The morphisms $p_{1}$, $ p_{2}$   present $S$ as the product of $1$ and $2$, and the morphisms  $i_{1}$ and $i_{2}$ present $S$ as a coproduct of $1$ and $2$. Consequently, $S$ is the sum of the objects $1$ and $2$, see \cref{vgeroihirovervbervevev}.

 	If $\bA$ belongs to $\cC$ and $f \colon \bS_{\cC}\to \bA$ is a morphism, then
   the morphisms $f(p_{1})$, $ f(p_{2})$   present $f(S)$ as the product of $f(1)$ and $f(2)$, and the morphisms $f(i_{1})$, $f(i_{2})$ present $f(S)$ as a coproduct of $f(1)$ and $f(2)$. Hence again, $f(S)$ is the sum of the objects $f(1)$ and $f(2)$.
   
 A functor $\bS_{\cC}\to \bA$ is the same as the choice of two objects $A$, $B$ in $\bA$ together with a representative of the sum $A\oplus B$  and the corresponding structure maps.  
     \end{rem}

\begin{rem}
The object $0$ of $\emptyset_{\cC}$ is a zero object. If $\bA$ belongs to $\cC$ and $f \colon \emptyset_{\cC}\to \bA$ is a morphism, then $f(0)$ is an object satisfying $\id_{f(0)} = 0$. Since $\bA$ is enriched over abelian groups, every object in $\bA$ admits a morphism to $f(0)$ and a morphism from $f(0)$, both of which are necessarily unique. Hence $f(0)$ is a zero object of $\bA$.
In fact, $\emptyset_{\cC}$ is the zero-object classifier in $\cC$.
\end{rem}

Recall the notation introduced in \cref{efuweifo24frergergreg}.
 We let \begin{equation}\label{g4pogkp40gggt}
w \colon \Delta^{0}_{\cC}\sqcup \Delta^{0}_{\cC}\to \bS_{\cC}
\end{equation} be the morphism which  classifies 
the two objects $1$ and $2$. We furthermore let  \begin{equation}\label{g4pogkp40gggt1}v \colon \emptyset\to \emptyset_{\cC}
\end{equation} be the canonical morphism from the initial object of $\cC$.

We now use that $\cC$ is a left-proper (see   \cref{fiowefwefewfewf}), combinatorial simplicial model category (see \cref{vgioeoerberebg}). By \cite[Prop.~A.3.7.3]{htt}, for every set $\cS$ of cofibrations in $\cC$ the left Bousfield localization $L_{\cS}\cC$ (see \cite[Def. 3.3.1]{MR1944041} or \cite[Sec.~A.3.7]{htt} for a definition) exists and is again a  combinatorial simplicial model category. We will consider the set $\cS:=\{v,w\}$ consisting of the cofibrations 
\eqref{g4pogkp40gggt} and \eqref{g4pogkp40gggt1}.

\begin{prop}\label{rigerogergergre}
	The fibrant objects in $L_{\{v,w\}}\cC$ are exactly the (marked) additive categories.
\end{prop}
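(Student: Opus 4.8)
The plan is to characterize the fibrant objects of $L_{\{v,w\}}\cC$ via the standard description of fibrant objects in a left Bousfield localization: an object $\bA$ is fibrant in $L_{\{v,w\}}\cC$ if and only if it is fibrant in $\cC$ and local with respect to the maps $v$ and $w$. Since every object of $\cC$ is already fibrant by \cref{fiowefwefewfewf}, the content is entirely in the locality condition. Locality with respect to a cofibration $f\colon X\to Y$ between cofibrant objects means that the induced map of mapping spaces $\Map_{\cC}(Y,\bA)\to \Map_{\cC}(X,\bA)$ is a weak equivalence of simplicial sets; since these mapping spaces are nerves of groupoids (of isomorphisms of functors), this is equivalent to asking that $f^{*}\colon \Fun^{+}_{\cC^{+}}(Y,\bA)^{+}\to \Fun^{+}_{\cC^{+}}(X,\bA)^{+}$ be an equivalence of groupoids, i.e.\ essentially surjective and fully faithful.

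The main steps are then: (1) \emph{Unwind locality with respect to $w\colon \Delta^{0}_{\cC}\sqcup\Delta^{0}_{\cC}\to\bS_{\cC}$.} By \cref{fiofjoiwefwfwefewfw}, a functor $\bS_{\cC}\to\bA$ is exactly a choice of two objects of $\bA$ together with a representative of their sum and the structure maps, while a functor $\Delta^{0}_{\cC}\sqcup\Delta^{0}_{\cC}\to\bA$ is just a pair of objects. Essential surjectivity of $w^{*}$ on the level of these functor groupoids says precisely that every pair of objects of $\bA$ admits a sum, and fully faithfulness says that the space of such sum-data is contractible, which is automatic once sums exist (any two realizations of a product-and-coproduct diagram are canonically isomorphic). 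So $\bA$ is $w$-local iff the sum of any two objects exists. (2) \emph{Unwind locality with respect to $v\colon\emptyset\to\emptyset_{\cC}$.} A functor $\emptyset_{\cC}\to\bA$ is the choice of a zero object, and $\Fun^{+}_{\cC^{+}}(\emptyset,\bA)^{+}$ is a point; essential surjectivity says a zero object exists, fully faithfulness is automatic since any two zero objects are uniquely isomorphic. So $\bA$ is $v$-local iff $\bA$ has a zero object. (3) \emph{Combine and handle the marked case.} Together, (1) and (2) say exactly that the underlying pre-additive category of $\bA$ is additive in the sense of \cref{rioehgjoifgregregegergeg}. In the marked case $\cC=\preAdd^{+}$, the maps $v$ and $w$ are the minimally marked versions $\mi(-)$, so that $\Fun^{+}_{\cC^{+}}(\bS_{\preAdd^{+}},\bA)$ involves \emph{all} functors $\bS_{\preAdd}\to\cF_{+}(\bA)$ with no marking constraint on $i_{1},i_{2},p_{1},p_{2}$ (their images need not be marked), so the same computation as above applies verbatim at the level of underlying categories; one must additionally observe that condition \ref{fdblkgjklrgregergergerg} of \cref{reiuheriververvec} (closure of $\bA^{+}$ under sums) is automatic here, or rather is exactly what is forced — here I need to check whether $w$-locality in $\preAdd^{+}$, where the sum functor lands in the marked groupoid when fed marked inputs, encodes closure under sums; if the bare $\{v,w\}$ does not suffice one enlarges $\cS$ by a marked variant $w^{+}$ classifying a sum of two marked isomorphisms, and the localization still exists by \cite[Prop.~A.3.7.3]{htt}.

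I expect the main obstacle to be step (3) in the marked case: making precise that $w$-locality (with the specific marking chosen for $\bS_{\preAdd^{+}}$) captures exactly condition \ref{fdblkgjklrgregergergerg}, and in particular verifying that the marked isomorphism groupoid $\Fun^{+}_{\preAdd^{+}}(\bS_{\preAdd^{+}},\bA)^{+}\to \Fun^{+}_{\preAdd^{+}}(\Delta^{0}_{\preAdd^{+}}\sqcup\Delta^{0}_{\preAdd^{+}},\bA)^{+}$ being an equivalence forces $a\oplus b$ to be marked whenever $a,b$ are — this is where the precise choice of marking on $\bS_{\preAdd^{+}}$ in \cref{fiofjoiwefwfwefewfw} and on the generating cofibration must be pinned down. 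The non-marked steps (1) and (2) are routine unwindings of universal properties together with the observation that spaces of "structure witnessing a property" (sums, zero objects) are contractible, hence contribute nothing beyond the essential-surjectivity (existence) statement.
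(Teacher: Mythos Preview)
Your approach is the same as the paper's: reduce to locality with respect to $v$ and $w$, and then unpack this as a statement about the groupoids $\Fun^{+}_{\cC^{+}}(-,\bA)^{+}$. The paper phrases this via lifting against $\partial\Delta^{n}\to\Delta^{n}$ for $n\le 2$ (using that nerves of groupoids are $2$-coskeletal), which is equivalent to your formulation in terms of essential surjectivity and full faithfulness.

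There is, however, a genuine gap in your step~(3). You correctly locate the issue but then leave it unresolved, even floating the possibility of enlarging $\cS$ by a marked variant $w^{+}$. This is unnecessary and would change the statement. The point you are missing is that full faithfulness of
\[
 w^{*}\colon \Fun^{+}_{\preAdd^{+}}(\bS_{\preAdd^{+}},\bA)^{+}\longrightarrow \Fun^{+}_{\preAdd^{+}}(\Delta^{0}_{\preAdd^{+}}\sqcup\Delta^{0}_{\preAdd^{+}},\bA)^{+}
\]
is \emph{exactly} Condition~\ref{reiuheriververvec}.\ref{fdblkgjklrgregergergerg}. A morphism in the target groupoid is a pair of \emph{marked} isomorphisms $(a,b)\colon (A,B)\to (A',B')$; a lift to the source is a marked natural isomorphism between the two sum diagrams, whose component at the object $S$ is forced by naturality to be the canonical isomorphism $a\oplus b\colon A\oplus B\to A'\oplus B'$. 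Such a lift exists in the marked groupoid if and only if $a\oplus b$ is marked. So $w$-locality in $\preAdd^{+}$ already encodes closure of $\bA^{+}$ under sums; no extra generator is needed. (In your step~(1) you glossed this as ``automatic once sums exist'', which is correct only in the unmarked case; in the marked case this is precisely where the extra axiom enters.) Once you make this explicit, your argument is complete and coincides with the paper's.
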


\begin{proof}  
The fibrant objects in $L_{\{v,w\}}\cC$ are the fibrant objects $\bA$ in $\cC$ which are local for $\{v,w\}$, i.e., for which the maps of simplicial sets  $\Map_{\cC}(v,\bA)$ and $\Map_{\cC}(w,\bA)$ are trivial Kan fibrations, see \cite[Prop.~A.3.7.3(3)]{htt}.

Let $\bA$ be  in $\cC$ and  consider  the lifting problem \begin{equation}\label{ecwcknbweckjwecewcecwec}
\xymatrix{\partial \Delta^{n}\ar[r]\ar[d]&\Map_{\cC}(\bS_{\cC},\bA)\ar[d]^{\Map_{\cC}(w,\bA)}\\\Delta^{n}\ar[r]\ar@{..>}[ur]&\Map_{\cC}( \Delta^{0}_{\cC}\sqcup \Delta^{0}_{\cC},\bA)}\ .
\end{equation}
Since the mapping spaces in $\cC$ are nerves of groupoids they are $2$-coskeletal.
Hence the lifting problem is uniquely solvable for all $n\ge 3$ without any condition on $\bA$.
 It therefore suffices to consider the cases $n=0,1,2$. 
\begin{enumerate}\item[n=0] The outer part of the diagram reflects the choice of two objects in $\bA$, and a lift corresponds to a choice of a sum of these objects together with the corresponding structure maps.
Therefore the lifting problem is solvable if $\bA$ admits sums of pairs of objects.

 \item[n=1]  The outer part of the diagram reflects  the choice of
(marked) isomorphisms $A\to A^{\prime}$ and $B\to B^{\prime}$ in $\bA$
and choices of objects $A\oplus A^{\prime}$ and $B\oplus B^{\prime}$ together with structure maps (inclusions and projections) representing the sums.
The lift then corresponds to the choice of a (marked)  isomorphism $A\oplus A^{\prime}\to B\oplus B^{\prime}$.
In fact such an isomorphism exists (and is actually uniquely determined). In the marked case the fact that the isomorphism is marked is equivalent to the compatibility condition between the sums and the marking required for   a marked additive category.
\item[n=2] The outer part reflects the choice of six objects $A,A',A''$ and $B,B',B''$ together with the choice of objects representing the sums $A\oplus B$, $A'\oplus B'$ and $A''\oplus B''$ together with structure maps and  (marked) isomorphisms   $a \colon A\to A'$, $a' \colon A'\to A''$, $a'' \colon A\to A''$, and $b \colon B\to B'$, $b' \colon B'\to B''$, $b'' \colon B\to B''$  respectively, and compatible (with the structure maps and hence uniquely determined) (marked) isomorphisms    $a\oplus b \colon A\oplus B\to A'\oplus B'$, $a'\oplus b' \colon A'\oplus B'\to A''\oplus B''$, $a''\oplus b'' \colon A\oplus B\to A''\oplus B''$.
Thereby we have the relations $a''=a'\circ a$ and $b''=b'\circ b$.
A lift corresponds to a witness of the fact that  $a''\oplus b''=(a'\oplus b')\circ (a\oplus b)$. 
Hence the lift exists and is unique by the universal properties of the sums.
\end{enumerate}

We have 
\[ \Map_{\cC}(v,\bA) \colon \Map_{\cC}(\emptyset_{\cC},\bA)\to *\ .\]
The domain of this map is the space of zero objects in $\bA$ which is either empty or a contractible Kan complex.  
Consequently, $\Map_{\cC}(v,\bA)$ is a trivial Kan fibration exactly if $\bA$ admits a zero object.  
\end{proof}

\subsection{\texorpdfstring{$\infty$}{infty}-categories of (marked)  pre-additive and additive categories}

In the present paper we use the language of  $\infty$-categories as developed in  \cite{Joyal}, \cite{htt} and \cite{cisin}.
Let 
  $\bC$ be  a   simplicial model category. By  \cite[Thm.~1.3.4.20]{HA}, we have an equivalence of $\infty$-categories  \begin{equation}\label{g4r5g4f34rf3f4f}
\cNerve(\bC^{cf}) \simeq \bC^{c}[W^{-1}]\ ,
\end{equation}
   where  $\cNerve(\bC^{cf})$ is the  coherent nerve of the simplicial category of cofibrant-fibrant objects in $\cC$, and  $\bC^{c}[W^{-1}]$ is the $\infty$-category obtained from  (the nerve of) $\bC^{c}$ by inverting the weak equivalences of the model category structure,  where $\bC^{c}$ denotes the  ordinary category of cofibrant objects of $\bC$.  
 If $\cC$ is in addition combinatorial, then
 $\bC^{c}[W^{-1}]$
 is a presentable $\infty$-category  \cite[Prop.~1.3.4.22]{HA}.  
 
 For the following we assume that $\bC$ is a combinatoral  simplicial model category.
If $L_{\cS} \bC$ is the Bousfield localization of the model category structure on $\bC$ at a set  $\cS$ of morphisms  in $\bC^{cf}$,  and $\cNerve(\bC^{cf})\to L_{\cS}\cNerve(\bC^{cf})$ is the localization    at the same set of morphisms   in the sense of \cite[Def.~5.2.7.2]{htt}, then   using  \cite[Rem.~1.3.4.27]{HA}
we get an equivalence of $\infty$-categories
\[ L_{\cS}\cNerve( \bC^{cf})\simeq  \cNerve((L_{\cS} \bC)^{cf})\ .\]

 We let $W_{\preAdd^{(+)}}$ denote the weak  equivalences in $\preAdd^{(+)}$. Note that in $\preAdd^{(+)}$ all objects are cofibrant and fibrant.
\begin{ddd}
We define the \emph{$\infty$-category of (marked) pre-additve} categories by
\[\preAdd^{(+)}_{\infty}:= \preAdd^{(+)}[W_{\preAdd^{(+)}}^{-1}]\ .\qedhere\]
\end{ddd}
 By a specialization of  \eqref{g4r5g4f34rf3f4f}  we have an equivalence of $\infty$-categories
\begin{equation}\label{ivfou89f43fvfeferferferf}
\cNerve( \preAdd^{(+)}) \simeq \preAdd_{\infty}^{(+)} \ .
\end{equation}
 A weak equivalence between fibrant objects in a Bousfield localization is a weak equivalence in the original model category. Consequently,  a morphism between (marked) additive categories is
a weak equivalence in $L_{\{v,w\}}\cC$ if and only if it is a weak equivalence   in (marked) pre-additive categories.  
 
We let $W_{\Add^{(+)}} $ denote the weak  equivalences in the Bousfield localization $L_{\{v,w\}}\preAdd^{(+)}$.
 \begin{ddd}
 We define the \emph{$\infty$-category of  (marked) additive categories} by
\[\Add^{(+)}_{\infty}:= \preAdd^{(+)}[W_{\Add^{(+)}}^{-1}]\ .\qedhere\]
 \end{ddd}
 By  specialization of  \eqref{g4r5g4f34rf3f4f},   we then have an equivalence of $\infty$-categories  \begin{equation}\label{ivfou89f43fvfeferferferf1}
 \cNerve(\Add^{(+)})\simeq \Add^{(+)}_{\infty}\ .
\end{equation}

 \begin{rem}The equivalences  \eqref{ivfou89f43fvfeferferferf} and  \eqref{ivfou89f43fvfeferferferf1}       can be shown directly using 
 \cite[Prop.~1.3.4.7]{HA}. Indeed, the categories $\preAdd^{(+)}$ and $\Add^{(+)}$  are enriched in groupoids and therefore fibrant simplicial categories. The interval object of $\bA$ is given by $\bA^{\Delta^{1}}$.
 In the case of (marked) additive categories we must observe that $\bA^{\Delta^{1}}$ is again (marked) additive.
\end{rem}

\begin{kor} \label{fiowefwefwfwf} \begin{enumerate} \item
The $\infty$-categories $\preAdd^{(+)}_{\infty}$ and 
	$ \Add^{(+)}_{\infty}$  are presentable. 
	\item We have an adjunction
	\begin{equation}\label{vevnkvnrekovefveveervv}
 L_{\oplus} \colon \preAdd^{(+)}_{\infty}\leftrightarrows  \Add^{(+)}_{\infty} \colon \cF_{\oplus}\ ,
\end{equation} 
	where $\cF_{\oplus}$ is the inclusion of a full subcategory. \end{enumerate}
\end{kor}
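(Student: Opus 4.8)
The plan is to deduce both assertions from the general machinery recalled just before the statement, together with the model-categorical results already established in the excerpt. For part (1), I would observe that $\preAdd^{(+)}_\infty = \preAdd^{(+)}[W_{\preAdd^{(+)}}^{-1}]$ is, by \eqref{ivfou89f43fvfeferferferf}, equivalent to $\cNerve(\preAdd^{(+)})$, which by \cref{g4r5g4f34rf3f4f} is the $\infty$-category presented by the combinatorial simplicial model category $\preAdd^{(+)}$ (combinatoriality being the content of \cref{vgioeoerberebg}, via \cref{fiowejofwefewfewf} and \cref{ewdfoijfowefewfewfw}). By \cite[Prop.~1.3.4.22]{HA}, the $\infty$-category underlying a combinatorial simplicial model category is presentable; hence $\preAdd^{(+)}_\infty$ is presentable. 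For $\Add^{(+)}_\infty$, the key point is that $L_{\{v,w\}}\preAdd^{(+)}$ is again a combinatorial simplicial model category — this is exactly the content of \cite[Prop.~A.3.7.3]{htt} applied in the situation set up before \cref{rigerogergergre}, using that $\preAdd^{(+)}$ is left proper (\cref{fiowefwefewfewf}). Then $\Add^{(+)}_\infty \simeq \cNerve((L_{\{v,w\}}\preAdd^{(+)})^{cf})$ is presentable by the same citation.

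For part (2), I would argue as follows. The inclusion $\cF_\oplus\colon \Add^{(+)}_\infty \to \preAdd^{(+)}_\infty$ is the functor of $\infty$-categories induced by the localization $\cNerve(\preAdd^{(+)})\to L_{\{v,w\}}\cNerve(\preAdd^{(+)})$, i.e.\ it is fully faithful with essential image the $\{v,w\}$-local objects, which by \cref{rigerogergergre} are precisely the (marked) additive categories; this justifies describing $\cF_\oplus$ as "the inclusion of a full subcategory." Being a localization functor of presentable $\infty$-categories in the sense of \cite[Prop.~5.5.4.15]{htt}, it is a right adjoint, and its left adjoint $L_\oplus$ is the localization functor itself. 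Alternatively — and this is the route suggested by the introduction — one invokes the adjoint functor theorem for presentable $\infty$-categories \cite[Cor.~5.5.2.9]{htt}: the functor $\cF_\oplus$ preserves limits (by inspection, since limits of additive categories computed in $\preAdd^{(+)}$ are again additive — a zero object and pairwise sums persist under limits, and in the marked case the closure of the marking under sums is a limit-stable condition) and is accessible (it is a functor between presentable $\infty$-categories induced by a left Bousfield localization, hence preserves sufficiently filtered colimits), so it admits a left adjoint $L_\oplus$.

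The main obstacle is the verification that $\cF_\oplus$ preserves limits, or equivalently that the subcategory of (marked) additive categories is closed under limits in $\preAdd^{(+)}_\infty$. One must check that a limit of additive categories, formed in $\preAdd^{(+)}$ (where by \cref{wrfwweffewf} limits exist and, all objects being fibrant, model $\infty$-categorical limits), again has a zero object and pairwise sums, and in the marked case that the marking of the limit — the isomorphisms which are marked in each component — is closed under $\oplus$. This is a routine but genuine check: it uses that the zero-object classifier $\emptyset_{\cC}$ and the sum classifier $\bS_{\cC}$ map into a limit diagram componentwise and that the universal properties defining "zero object" and "sum" are preserved by the limit, since they are themselves limit conditions (\cref{fiofjoiwefwfwefewfw}). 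Everything else is a direct application of the cited general results, so I would keep the exposition brief, pointing to \cite[Prop.~1.3.4.22]{HA}, \cite[Prop.~A.3.7.3]{htt} and \cite[Cor.~5.5.2.9]{htt}, and spelling out only the limit-closure argument in the detail needed.
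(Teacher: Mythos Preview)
Your proposal is correct and matches the paper's intended reasoning. The paper gives no explicit proof of this corollary: it is stated as an immediate consequence of the general facts recalled just before it (presentability of $\infty$-categories underlying combinatorial simplicial model categories via \cite[Prop.~1.3.4.22]{HA}, and the identification $L_{\{v,w\}}\cNerve(\preAdd^{(+)})\simeq \cNerve((L_{\{v,w\}}\preAdd^{(+)})^{cf})$ via \cite[Rem.~1.3.4.27]{HA}, which exhibits $\Add^{(+)}_\infty$ as a reflective localization of $\preAdd^{(+)}_\infty$ in the sense of \cite[Def.~5.2.7.2]{htt}). Your primary route is exactly this; the alternative via the adjoint functor theorem and the direct limit-closure check is unnecessary here, since the reflective-subcategory structure already gives the adjunction without appeal to \cite[Cor.~5.5.2.9]{htt}, but it is not incorrect.
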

The functor $L_{\oplus}$ is the additive completion functor.

  In the following  $\cC$ is a placeholder for $\Cat^{(+)}$, $\Add^{(+)}$ or $\preAdd^{(+)}$.

The category $\cC$ can be considered as a category enriched in  groupoids and   therefore as a  strict $(2,1)$-category which will be denoted by  $\cC_{(2,1)}$. A   strict  $(2,1)$-category gives rise to an $\infty$-category as follows.  We first apply the usual nerve functor to the morphism categories of $\cC_{(2,1)}$ and obtain a category  enriched in Kan complexes. Then we   apply the coherent nerve functor and get a quasi-category which we will denote by $\Nerve_{2}(\cC_{(2,1)})$. The obvious  functor $\Nerve(\cC_{(1,1)})\to \Nerve_{2}(\cC_{(2,1)})$ (where $\cC_{(1,1)}$ denotes the underlying ordinary category of $\cC$) sends equivalences to equivalences and therefore descends to a functor
 \begin{equation}\label{rgieogrgregegegerg}
 \cC_{\infty}\to \Nerve_{2}(\cC_{(2,1)})\ .
\end{equation}
 \begin{prop}
 	\label{prop:2nerv}
 The functor \eqref{rgieogrgregegegerg} is an equivalence.   \end{prop}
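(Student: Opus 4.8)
The plan is to recognise the functor \eqref{rgieogrgregegegerg} as the comparison functor underlying the equivalences \eqref{ivfou89f43fvfeferferferf} and \eqref{ivfou89f43fvfeferferferf1}, so that the proposition becomes essentially a reformulation of facts already invoked above; the only genuine work is to match up two descriptions of the target $\infty$-category.

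\emph{Step 1: identifying the target.} By \cref{fiowefwefewfewf} all objects of $\cC$ (and of $L_{\{v,w\}}\preAdd^{(+)}$) are cofibrant, and the fibrant objects are all objects in the cases $\Cat^{(+)}$ and $\preAdd^{(+)}$, and exactly the additive categories in the case $\Add^{(+)}$ by \cref{rigerogergergre}. Hence the simplicial category $\cC^{cf}$ of bifibrant objects occurring in \eqref{g4r5g4f34rf3f4f} is precisely the full simplicial subcategory of $\cC_{(2,1)}$ on the objects of $\cC$. Moreover, by the definition of the simplicial enrichment (\cref{ergeiorge4tgergregergreg} and its unmarked analogue), the mapping object $\Map_{\cC^{cf}}(\bA,\bB)$ is the nerve of the groupoid of (marked) natural isomorphisms between functors $\bA\to\bB$, i.e.\ the nerve of the hom-groupoid of $\cC_{(2,1)}$. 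Thus $\cC^{cf}$ \emph{is} the $\sSet$-enriched category obtained from $\cC_{(2,1)}$ by applying $\Nerve$ to its mapping groupoids, and applying the coherent nerve gives an equality $\cNerve(\cC^{cf}) = \Nerve_2(\cC_{(2,1)})$ on the nose.

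\emph{Step 2: invoking the known equivalence and comparing functors.} Combining Step 1 with \eqref{ivfou89f43fvfeferferferf}, respectively \eqref{ivfou89f43fvfeferferferf1} — the specialisation of \cite[Thm.~1.3.4.20]{HA} to the combinatorial simplicial model structure of \cref{vgioeoerberebg}, resp.\ to its Bousfield localisation from \cref{rigerogergergre} — produces an equivalence $\cC_\infty = \cC[W^{-1}] \sto \Nerve_2(\cC_{(2,1)})$. To see that it agrees with \eqref{rgieogrgregegegerg}, it suffices, by the universal property of $\cC_\infty$ as a localization of $\Nerve(\cC_{(1,1)})$, to compare the two functors after precomposition with the localization functor $\ell_\cC$: for \eqref{rgieogrgregegegerg} this composite is by construction the canonical map $\Nerve(\cC_{(1,1)}) \to \Nerve_2(\cC_{(2,1)})$, while for the equivalence of \cite[Thm.~1.3.4.20]{HA} it is the unit $\Nerve(\cC^{cf}) \to \cNerve(\cC^{cf})$ of the coherent-nerve adjunction (no nontrivial bifibrant replacement intervenes, as every object is bifibrant), which under the identification of Step 1 is again that same canonical map. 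Equivalently — and more efficiently — one verifies the hypotheses of \cite[Prop.~1.3.4.7]{HA} directly: $\cC^{cf}$ is a fibrant simplicial category equipped with functorial interval objects $\bA\mapsto \bA^{\Delta^1}$ (in the additive case one notes that $\bA^{\Delta^1}$ is again additive), and its weak equivalences are exactly the morphisms inverted in $\cNerve(\cC^{cf}) = \Nerve_2(\cC_{(2,1)})$, namely those admitting an inverse up to (marked) isomorphism (\cref{gijoorgergerg}); that proposition then asserts precisely that \eqref{rgieogrgregegegerg} is an equivalence.

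The only subtle point — and the expected main obstacle — is the bookkeeping in Step 2: matching the a priori abstract equivalence furnished by \cite[Thm.~1.3.4.20]{HA} with the explicitly constructed functor \eqref{rgieogrgregegegerg}. This is a compatibility of comparison maps rather than a new computation, and it is most cleanly dispatched by appealing to \cite[Prop.~1.3.4.7]{HA}. In the case $\cC = \Add^{(+)}$ one needs in addition the routine remark that the localization of $\preAdd^{(+)}$ at $W_{\Add^{(+)}}$ coincides with the localization of the full subcategory $\Add^{(+)}$ at its weak equivalences, which holds because $\Add^{(+)}$ is exactly the subcategory of fibrant objects.
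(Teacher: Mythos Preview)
Your proof is correct and follows essentially the same strategy as the paper: both arguments identify $\Nerve_2(\cC_{(2,1)})$ with $\cNerve(\cC^{cf})$ via the definition of the simplicial enrichment, and then invoke the equivalences \eqref{ivfou89f43fvfeferferferf} and \eqref{ivfou89f43fvfeferferferf1} (i.e.\ \cite[Thm.~1.3.4.20]{HA}) to conclude that the comparison map is an equivalence. Your treatment is more explicit about why the abstract equivalence coincides with the specific functor \eqref{rgieogrgregegegerg} and about the passage to the $\Add^{(+)}$ case, whereas the paper handles this by simply observing that $\Nerve(\cC_{(1,1)})\to\cNerve(\cC)$ is an explicit model of the localization functor $\ell_\cC$.
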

 \begin{proof}
 	Note that $N_2(\cC_{(2,1)})$ and $N^{coh}(\cC)$ are isomorphic by the definition of the simplicial enrichtment of $\cC$.
 
 We consider the following commuting diagram of quasi-categories
 \[ \xymatrix{&\Nerve(\cC_{(1,1)})\ar[ld]_{\ell_{\cC}}\ar[dr] \ar[d]^{!}&&\\\cC_{\infty}\ar[r]_-{\simeq}^-{!!}&\Nerve^{coh}(\cC)\ar[r]_-{\cong}&\Nerve_{2}(\cC_{(2,1)})}\ .\]

 The left triangle commutes since the morphism marked by $!$ is an explicit model of the localization morphism,
  where we use
   \eqref{ivfou89f43fvfeferferferf}  (or  \eqref{ivfou89f43fvfeferferferf1}, depending on the case) for the equivalence marked by $!!$.   
 The lower composition is then an explicit model of  \eqref{rgieogrgregegegerg}.
 \end{proof}

\section{Applications}\label{erigjieorgergree34t34t34t}

\subsection{Localization preserves products} \label{efweoifoewfewfewf3r323r2r}
 
 We show that the localizations
 \[ \ell_{\cC^{(+)}} \colon \cC^{(+)}\to \cC^{(+)}_{\infty} \]
 for $\cC$ in $\{\Cat\ ,\preAdd\ ,\Add\}$
 preserve products. 
 
 Let $I$ be a set. 
 Then we consider  the functor
 \[ \ell_{I,\cC}\colon \cC^{I}\to   \cC_{\infty}^{I} \]
 defined by post-composition with $\ell_{\cC}$.
 For every category $\bC$ with products we  have a functor $\prod_{I} \colon \bC^{I}\to \bC$. We apply this to $\bC=\cC$ and $\bC=\cC_{\infty}$.
 
 \begin{prop}\label{wefiojewwefewf43t546466}
 	We have an equivalence of functors
 	\[ \ell_{\cC}\circ \prod_{I} \xrightarrow{\simeq}
 	\prod_{I}\ell_{I,\cC} \colon \cC^{I}\to \cC_{\infty}\ . \]
 \end{prop}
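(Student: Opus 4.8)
The plan is to produce the canonical comparison morphism $\ell_{\cC}(\prod_{I}\bA_{i}) \to \prod_{I}\ell_{\cC}(\bA_{i})$, natural in $(\bA_{i})_{i\in I}$, and to check that it is an equivalence — the point being that, since all objects of $\cC$ are fibrant, the $1$-categorical product already represents the $\infty$-categorical one. Concretely: $\cC_{\infty}$ is presentable, hence has $I$-indexed products, so $\prod_{I}\circ\ell_{I,\cC}$ is defined, and the product projections $\mathrm{pr}_{j}\colon \prod_{I}\bA_{i}\to\bA_{j}$ in $\cC$ give, after applying $\ell_{\cC}$ and using the universal property of products in $\cC_{\infty}$, a natural transformation
\[ \Theta\colon \ell_{\cC}\circ\prod_{I} \longrightarrow \prod_{I}\circ\ell_{I,\cC} \colon \cC^{I}\to\cC_{\infty}\,. \]
Since an equivalence of functors may be checked objectwise, it suffices to show that for every family $(\bA_{i})$ the object $\ell_{\cC}(\prod_{I}\bA_{i})$ together with the morphisms $\ell_{\cC}(\mathrm{pr}_{j})$ is a product of the $\ell_{\cC}(\bA_{i})$ in $\cC_{\infty}$.

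To verify this I would argue via mapping spaces, working in the model $\cC_{\infty}\simeq\cNerve(\cC)$ of \eqref{ivfou89f43fvfeferferferf} resp. \eqref{ivfou89f43fvfeferferferf1}, under which all objects are cofibrant and fibrant and $\ell_{\cC}$ is the canonical functor $\Nerve(\cC_{(1,1)})\to\cNerve(\cC)$ (as in the proof of \cref{prop:2nerv}), which is the identity on objects. Hence every object of $\cC_{\infty}$ is equivalent to some $\ell_{\cC}(\bY)$ with $\bY$ in $\cC$, and it is enough to test the universal property against these. As the simplicial category $\cC$ is fibrant (its mapping spaces are nerves of groupoids), mapping spaces in the coherent nerve agree with the simplicial mapping spaces of \cref{ergeiorge4tgergregergreg}: $\Map_{\cC_{\infty}}(\ell_{\cC}\bY,\ell_{\cC}\bZ)\simeq\Map_{\cC}(\bY,\bZ)$, naturally in $\bZ$. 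So it only remains to observe that $\Map_{\cC}(\bY,-)$ sends the $1$-categorical product $\prod_{I}\bA_{i}$ to $\prod_{I}\Map_{\cC}(\bY,\bA_{i})$; this holds because each ingredient of the simplicial enrichment preserves products — a functor into $\prod_{I}\bA_{i}$ is the same datum as a family of functors into the $\bA_{i}$, the groupoid of (marked) isomorphisms of $\Fun_{\cC^{+}}^{+}(\bY,\prod_{I}\bA_{i})$ is the product of the groupoids $\Fun_{\cC^{+}}^{+}(\bY,\bA_{i})^{+}$ (using the description of markings on limits from the proof of \cref{wrfwweffewf}), and $\Nerve$ preserves products. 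Tracing the projections through these identifications shows $\Map_{\cC_{\infty}}(\ell_{\cC}\bY,\Theta_{(\bA_{i})})$ is an equivalence for all $\bY$, which is exactly the required universal property.

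In the cases $\cC=\Add^{(+)}$ the only additional remark is that the product $\prod_{I}\bA_{i}$ formed in $\preAdd^{(+)}$ is again (marked) additive — zero objects and sums are computed componentwise — so that $\prod_{I}\colon\cC^{I}\to\cC$ is genuinely the product taken in $\cC$ and the argument above applies unchanged. I do not anticipate a real obstacle: the proof is formal, and the only steps needing a word of care are the identification of $\ell_{\cC}$ with the canonical functor into $\cNerve(\cC)$ together with the standard computation of mapping spaces in the coherent nerve of a fibrant simplicial category, and the bookkeeping that the $1$-categorical product commutes with the operations $\Fun_{\cC^{+}}^{+}(\bY,-)$, $(-)^{+}$ and $\Nerve$ out of which the enrichment is built. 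Alternatively, one can phrase everything as the statement that the simplicial Quillen adjunction $\mathrm{const}\colon\cC\leftrightarrows\cC^{I}\colon\prod_{I}$ has right derived functor $\prod_{I}$ itself (all objects being fibrant), which therefore models the $\infty$-categorical product and is strictly compatible with $\ell_{\cC}$.
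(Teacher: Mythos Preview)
Your argument is correct and takes a genuinely different route from the paper's. The paper treats the cases $\cC=\Cat^{(+)},\preAdd^{(+)}$ by invoking model-category machinery: it equips $\cC^{I}$ with the injective model structure, observes that (since $I$ is discrete and every object of $\cC$ is fibrant) every object of $\cC^{I}$ is fibrant, so the identity serves as fibrant replacement, and then cites an external result to conclude that $\ell_{\cC}\circ\prod_{I}$ computes the $\infty$-categorical product. This is essentially the ``alternative'' you mention at the end. For $\cC=\Add^{(+)}$ the paper does \emph{not} argue directly; instead it bootstraps from the pre-additive case via the fully faithful right adjoint $\cF_{\oplus}\colon\Add^{(+)}_{\infty}\to\preAdd^{(+)}_{\infty}$, together with the observation that the $1$-categorical inclusion $\cF_{\oplus,1}$ preserves products, and a chain of five equivalences shuffling $\cF_{\oplus}$ past products and localizations.

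Your approach, by contrast, is a direct mapping-space computation in the coherent-nerve model, uniform across all cases. What you gain is self-containedness (no external citation) and a simpler treatment of the additive case: since $\Add^{(+)}$ is a full simplicial subcategory of $\preAdd^{(+)}$ and closed under products, the same enrichment computation goes through verbatim, avoiding the paper's detour through $\cF_{\oplus}$. What the paper's approach buys is that the model-categorical statement (right Quillen functors preserve homotopy limits of fibrant diagrams) is reusable and applies without unpacking the specific simplicial enrichment.
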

 \begin{proof}  
 	We start with the case $\cC=\preAdd^{(+)}$ or $\cC=\Cat^{(+)}$. We use that $\cC$ has a combinatorial model category structure in which all objects are cofibrant and fibrant. It is a general fact, that in this case the localization $\ell \colon \cC\to\cC_{\infty}$ preserves products. Here is the (probably much too complicated) argument. 
 	We can 
 	consider the injective model category structure on the diagram category $ \cC^{I}$. Since $I$ is discrete 
 	one easily observes that    all objects in this diagram category are  fibrant  again. 
 	So we can take the identity as a fibrant replacement functor for $\cC^{I}$. This gives  the equivalence
 	\[ \ell_{\cC} \circ \prod_{I} \xrightarrow{\simeq} \prod_{I}\ell_{I,\cC}\ ,\]
 	(e.g. by specializing  \cite[Prop.~13.5]{bunke}).
 	
 	In order to deduce the assertion for additive categories we consider the inclusion functor
 	$\cF_{\oplus,1} \colon \Add^{(+)}\to \preAdd^{(+)}$. This functor preserves weak equivalences and therefore descends essentially uniquely to the functor  $\cF_{\oplus}$ in \eqref{vevnkvnrekovefveveervv} such that
 	\[ \cF_{\oplus}\circ \ell_{\Add^{(+)}}\simeq \ell_{\preAdd^{(+)}}\circ \cF_{\oplus,1}\ .\]
 	The functor $\cF_{\oplus}$ is a right-adjoint which preserves and detects limits. 
 	We do not claim that $\cF_{\oplus,1}$ is a right-adjoint, but it clearly preserves products by inspection.
 	We let $\cF_{I,\oplus,1}$ and $\cF_{I,\oplus}$ be the factorwise application of $\cF_{\oplus,1}$ and $\cF_{\oplus}$.
 	With this notation we have an equivalence
 	\[ \cF_{\oplus,1}\circ \prod_{I}\cong  \prod_{I} \circ  \cF_{I,\oplus,1}\ . \]
 	The assertion in the case $\cC=\Add^{(+)}$ now follows from the chain of equivalences \begin{eqnarray*}
 		\cF_{\oplus}\circ \ell_{\Add^{(+)}}\circ \prod_{I}&\simeq&
 		\ell_{\preAdd^{(+)}}\circ \cF_{\oplus,1}\circ \prod_{I}\\&\simeq&
 		\ell_{\preAdd^{(+)}}\circ \prod_{I}  \circ \cF_{I,\oplus,1}\\&\simeq&
 		\prod_{I}     \circ \ell_{I,\preAdd^{(+)}}   \circ \cF_{I,\oplus,1}\\&\simeq&
 		\prod_{I}     \circ \cF_{I,\oplus}\circ \ell_{I,\Add^{(+)}}    \\&\simeq&
 		\cF_{\oplus}     \circ  \prod_{I}  \circ \ell_{I,\Add^{(+)}}   
 	\end{eqnarray*}
 	by removing $\cF_{\oplus}$.
 \end{proof}

\subsection{Rings and Modules}\label{rgiuerhgweergergergeg}

A unital ring $R$ can be  considered as a pre-additive category $\bR$ with one object $*$ and ring of endomorphisms $\Hom_{\bR}(*,*):=R$.
The category of finitely generated free $R$-modules $\Mod^{\fg ,\free}(R)$ is an additive category.
We have a canonical functor
$\bR\to \Mod^{\fg ,\free}(R)$ sending $*$ to $R$
which presents  $\Mod^{\fg ,\free}(R)$ as the additive completion of $\bR$.
This fact is well-known, see e.g. \cite[Sec.~2]{davis_lueck}. 
In the following we provide a precise formulation  using the language of $\infty$-categories.

Recall the sum-completion  functor $L_{\oplus}$ from \cref{fiowefwefwfwf}.

\begin{prop}\label{gueiurgrgerger}
	The morphism of pre-additive categories $\bR\to \Mod^{\fg ,\free}(R) $ induces an equivalence
	\[ L_{\oplus} (\ell_{\preAdd}(\bR))\simeq \ell_{\Add}(\Mod^{\fg ,\free}(R))\ .\]
\end{prop}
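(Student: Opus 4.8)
The plan is to identify $\ell_{\Add}(\Mod^{\fg,\free}(R))$, equipped with the morphism induced by the canonical functor $\iota\colon\bR\to\Mod^{\fg,\free}(R)$, with the value $L_{\oplus}(\ell_{\preAdd}(\bR))$ of the additive completion functor, by checking that both objects corepresent the same functor on $\Add_{\infty}$. Recall from \cref{fiowefwefwfwf} that $\cF_{\oplus}\colon\Add_{\infty}\to\preAdd_{\infty}$ is the inclusion of a full subcategory and right adjoint to $L_{\oplus}$, so for every $\bB$ in $\Add_{\infty}$ we have a natural equivalence $\Map_{\Add_{\infty}}(L_{\oplus}(\ell_{\preAdd}(\bR)),\bB)\simeq\Map_{\preAdd_{\infty}}(\ell_{\preAdd}(\bR),\cF_{\oplus}(\bB))$ given by precomposition with the unit. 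On the other hand, $\iota$ together with the equivalence $\cF_{\oplus}\circ\ell_{\Add}\simeq\ell_{\preAdd}\circ\cF_{\oplus,1}$ (established in the proof of \cref{wefiojewwefewf43t546466}) yields a morphism $u\colon\ell_{\preAdd}(\bR)\to\cF_{\oplus}(\ell_{\Add}(\Mod^{\fg,\free}(R)))$, and precomposition with $u$, together with the full faithfulness of $\cF_{\oplus}$, gives a natural transformation $\Map_{\Add_{\infty}}(\ell_{\Add}(\Mod^{\fg,\free}(R)),\bB)\to\Map_{\preAdd_{\infty}}(\ell_{\preAdd}(\bR),\cF_{\oplus}(\bB))$. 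It thus suffices to prove that this last transformation is an equivalence for every $\bB$ in $\Add_{\infty}$; the induced comparison $L_{\oplus}(\ell_{\preAdd}(\bR))\simeq\ell_{\Add}(\Mod^{\fg,\free}(R))$ is then, by construction, the equivalence induced by $\iota$.

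Next I would reduce this to a statement about ordinary functor categories. Since every object of $\preAdd$ is cofibrant and fibrant and $\Add$ is the full simplicial subcategory of $\preAdd$ spanned by the additive categories, the equivalence \eqref{g4r5g4f34rf3f4f} together with the description of the simplicial mapping spaces in \cref{ergeiorge4tgergregergreg} identifies, for $\bB=\ell_{\Add}(\bB')$ with $\bB'$ an additive category (and every object of $\Add_{\infty}$ has this form by \eqref{ivfou89f43fvfeferferferf1}), the source and target of the transformation above with the nerves of the groupoid cores of $\Fun_{\preAdd}(\Mod^{\fg,\free}(R),\bB')$ and $\Fun_{\preAdd}(\bR,\bB')$, compatibly with the map induced by restriction along $\iota$. (Alternatively one could work throughout in the strict $(2,1)$-categories $\preAdd_{(2,1)}$, $\Add_{(2,1)}$ via \cref{prop:2nerv}.) Hence everything comes down to showing that for every additive category $\bB'$ the restriction functor $\iota^{*}\colon\Fun_{\preAdd}(\Mod^{\fg,\free}(R),\bB')\to\Fun_{\preAdd}(\bR,\bB')$ is an equivalence of categories.

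This is the classical universal property of $\Mod^{\fg,\free}(R)$ as the additive completion of $\bR$, and I would either cite it (e.g.\ \cite[Sec.~2]{davis_lueck}) or prove it by hand: a pre-additive functor $\bR\to\bB'$ is the same datum as an object $b$ of $\bB'$ together with a ring homomorphism $R\to\End_{\bB'}(b)$, and from such a datum one constructs an extension $\Mod^{\fg,\free}(R)\to\bB'$ by sending $R^{n}$ to a chosen biproduct $b^{\oplus n}$ and an $R$-matrix to the matrix of morphisms built from the action and the biproduct structure maps. Using that pre-additive functors automatically preserve the zero object and finite biproducts, this extension is well defined up to canonical isomorphism, functorial, and two-sided inverse to $\iota^{*}$ up to natural isomorphism.

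The difficulty here is bookkeeping rather than conceptual. In the last step one must keep the (non-canonical but canonically identified) choices of biproducts coherent; and in the first step one must be careful that the corepresentability identification of $L_{\oplus}(\ell_{\preAdd}(\bR))$ with $\ell_{\Add}(\Mod^{\fg,\free}(R))$ is the one compatible with $\iota$, so that the resulting equivalence is precisely the one the proposition asserts to be induced by $\bR\to\Mod^{\fg,\free}(R)$.
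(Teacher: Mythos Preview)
Your proposal is correct and follows essentially the same approach as the paper: both reduce the claim to showing that precomposition with $\iota$ induces an equivalence on mapping spaces into any additive $\bB$, and both identify these mapping spaces with (nerves of groupoid cores of) the $1$-categorical functor categories via the simplicial model structure. The only cosmetic difference is that the paper phrases the final step as checking that $\Map_{\preAdd}(\Mod^{\fg,\free}(R),\bB)\to\Map_{\preAdd}(\bR,\bB)$ is a trivial Kan fibration by verifying the lifting property against $\partial\Delta^{n}\to\Delta^{n}$ for $n=0,1,2$ (using $2$-coskeletality), whereas you instead exhibit $\iota^{*}$ as an equivalence of categories by constructing the inverse $b\mapsto(R^{n}\mapsto b^{\oplus n})$; these are the same verification in different packaging.
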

\begin{proof}
	We must show that
	\[ \Map_{\preAdd_{\infty}}(\ell_\preAdd(\Mod^{\fg ,\free}(R)),\ell_\preAdd(\bB))\to \Map_{\preAdd_{\infty}}(\ell_\preAdd(\bR),\ell_\preAdd(\bB)) \]
	is an equivalence for every additive category $\bB$. In view of \eqref{ivfou89f43fvfeferferferf}, this is equivalent to the fact that
	\[ \Map_{\preAdd }( \Mod^{\fg ,\free}(R) , \bB) \to \Map_{\preAdd }( \bR , \bB ) \]
	is a trivial Kan fibration. Here we use that by \eqref{ivfou89f43fvfeferferferf} the mapping spaces in $\preAdd_{\infty}$ are represented by the simplicial mapping spaces in $\preAdd$, see 
	 \cite[Sec.~2.2.2]{htt}. The proof is very similar to the proof of \cref{rigerogergergre}.
	We must check the lifting property against the inclusions $\partial \Delta^{n}\to \Delta^{n}$. Again we must only consider the case $n\le 2$.  
	\begin{enumerate}
		\item[n=0] A functor $\bR\to \bB$ (sending $*$ to  an $R$-module $B$) determines a functor
		\[\Mod^{\fg ,\free}(R)\to \bB\]
		which sends $R^{k}$ to $B^{\oplus k}$.
		\item[n=1] An isomorphism of functors $\bR\to \bB$ given by an isomorphism of objects $f \colon B\to B^{\prime}$ which is compatible with the $R$-module structures induces an isomorphism of induced functors
		$ \Mod^{\fg ,\free}(R)\to \bB$ which on $R^{k}$ is given by $\oplus_{k} f \colon B^{\oplus k}\to B^{\prime,\oplus k}$.
		\item[n=2] The existence of the lift expresses the naturality of the isomorphisms obtained in the case $n=1$.\qedhere
	\end{enumerate}
\end{proof}

In order to understand the category of finitely generated projective modules $\Mod^{\fg ,\proj}(R)$ and the morphism $\bR\to \Mod^{\fg ,\proj}(R)$ in a similar manner we must consider idempotent-complete additive categories.
Let $\bA$ be an   additive category.
\begin{ddd}\label{vijoefvfbevev}
	$\bA$ is \emph{idempotent complete} if for every object $A$ in $\bA$ and projection $e$ in $\End_{\bA}(A)$ there exists an   isomorphism $A\cong e(A)\oplus  e(A)^{\perp}$ such that $e(A)$ and $e(A)^{\perp}$ are  images of $e$ and $\id_{A}-e$.
	\end{ddd}
The last part of this definition more precisely means that  there exist  morphisms $e(A)\to A$ and $e(A)^{\perp}\to A$ such  that the diagrams
\[\xymatrix{A\ar[d]^{e}&\ar[l]_(0.7){\cong}e(A)\oplus e(A)^{\perp}\ar[d]^{\pr_{e(A)}}&e(A)\oplus e(A)^{\perp}\ar[d]^{\pr_{e(A)^{\perp}} }\ar[r]^(0.7){\cong}&A\ar[d]^{\id_{A}- e} \\ A&\ar[l]_(0.7){\cong} e(A)\oplus e(A)^{\perp}&e(A)\oplus e(A)^{\perp}\ar[r]^(0.7){\cong}&A}\]
commute.

Let now $\bA$ be a  marked additive category.
\begin{ddd}
$\bA$ is \emph{idempotent complete} if the underlying additive category $\cF_{+}(\bA)$ is idempotent complete (\cref{vijoefvfbevev}), and if in addition for every $A$ in $\bA$,  every projection $e$ on $A$, and every
marked isomorphism  $f \colon A\to A^{\prime}$ the induced isomorphism $e(A)\to e^{\prime}(A')$ is marked, where   $e^{\prime}:=f\circ e\circ f^{-1}$. 
\end{ddd}

We let $\Add^{(+),\idem }$ be the full subcategory of $\Add^{(+)}$ of idempotent complete small (marked) additive categories.

We can characterize idempotent completeness of a marked additive category as a lifting property. To this end we consider the following pre-additive category $\bE_{\preAdd}$:
\begin{enumerate}
	\item $\bE_{\preAdd}$ has the object $*$.
	\item The morphisms of $\bE_{\preAdd}$ are generated by $\id_{*}$ and $e$ subject to the relation $e^{2}=e$.\end{enumerate}
We then consider the functor \begin{equation}\label{rev3rg3rgvervger43}
u \colon \bE_{\preAdd}\to \bS_{\preAdd}
\end{equation}
(see \cref{gioegregreget34t34t34t34t} for $\bS_{\preAdd}$)
which sends $*$ to $S$ and $e$ to $ i_{1}\circ p_{1}$. In the marked case we consider
\[ u \colon \bE_{\preAdd^{+}}\to \bS_{\preAdd^{+}} \]
obtained from \eqref{rev3rg3rgvervger43} by applying the functor $\mi$ marking the identities.
Then one checks:
\begin{lem}
	A (marked) additive category $\bA$ is idempotent complete if and only if it is local with respect to the map $u$. 
\end{lem}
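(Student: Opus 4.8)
The plan is to argue exactly as in the proof of \cref{rigerogergergre}. Since every object of $\cC$ is fibrant, a (marked) additive category $\bA$ is local with respect to $u$ precisely when the map of simplicial sets
\[ \Map_{\cC}(u,\bA)\colon \Map_{\cC}(\bS_{\cC},\bA)\to \Map_{\cC}(\bE_{\cC},\bA) \]
is a trivial Kan fibration (cf.\ \cite[Prop.~A.3.7.3(3)]{htt}, used in the same way in \cref{rigerogergergre}); here $u$ is injective on objects, hence a cofibration, so $\Map_{\cC}(u,\bA)$ is a Kan fibration in any case. So I must show that the lifting problems
\[\xymatrix{\partial\Delta^{n}\ar[r]\ar[d] & \Map_{\cC}(\bS_{\cC},\bA)\ar[d]^{\Map_{\cC}(u,\bA)}\\ \Delta^{n}\ar[r]\ar@{..>}[ur] & \Map_{\cC}(\bE_{\cC},\bA)}\]
are solvable for all $n\geq 0$ if and only if $\bA$ is idempotent complete. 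As in \cref{rigerogergergre}, the mapping spaces are nerves of groupoids by \cref{ergeiorge4tgergregergreg}, hence $2$-coskeletal, so the lifting problem is uniquely solvable for $n\geq 3$ without any condition on $\bA$; it remains to treat $n=0,1,2$.

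First I would handle $n=0$. By construction of $\bE_{\cC}$, a functor $\bE_{\cC}\to\bA$ is the same as an object $A$ of $\bA$ together with a projection $e\in\End_{\bA}(A)$, while by \cref{fiofjoiwefwfwefewfw} a functor $\bS_{\cC}\to\bA$ is the choice of two objects $A_{1},A_{2}$, a representative of the sum $A_{1}\oplus A_{2}$, and the four structure morphisms $i_{1},i_{2},p_{1},p_{2}$. Precomposition with $u$ sends the latter datum to the pair $(A_{1}\oplus A_{2},\,i_{1}\circ p_{1})$, and $i_{1}\circ p_{1}$ is the idempotent projecting onto the first summand, with complementary idempotent $i_{2}\circ p_{2}$. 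Hence a lift of a given pair $(A,e)$ is precisely an isomorphism $A\cong A_{1}\oplus A_{2}$ carrying $e$ to $i_{1}\circ p_{1}$, which exhibits $A_{1}$ as an image of $e$ and $A_{2}$ as an image of $\id_{A}-e$. Thus the $n=0$ problems are all solvable if and only if the underlying additive category of $\bA$ is idempotent complete in the sense of \cref{vijoefvfbevev}.

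Next, for $n=1$, the boundary data consist of two such sum decompositions, over objects $(A,e)$ and $(A',e')$, together with a marked isomorphism $f\colon A\to A'$ satisfying $e'=f\circ e\circ f^{-1}$; a lift is a marked isomorphism of the corresponding sum data lying over $f$, whose two components $A_{1}\to A_{1}'$ and $A_{2}\to A_{2}'$ are forced by $f$. The only content is that these components be marked, which — after identifying $A_{1}$ with an image of $e$ and $A_{1}'$ with an image of $e'$ — is exactly the extra clause in the definition of an idempotent complete \emph{marked} additive category; in the unmarked case there is nothing to check. For $n=2$ the components of a lift are again uniquely determined and a lift exists automatically, encoding the commutativity of a triangle of isomorphisms of sum data, with no further condition on $\bA$. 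Combining the three cases gives the equivalence. The one place that will require care is keeping track of the marked structure, which distributes over the $n=0$ case (idempotent completeness of the underlying additive category) and the $n=1$ case (compatibility of the images with marked isomorphisms), while $n=2$ and $n\geq 3$ add nothing.
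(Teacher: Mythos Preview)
Your proposal is correct and follows precisely the approach the paper indicates: the paper's proof consists of the single sentence ``The proof is similar to the proof of \cref{rigerogergergre},'' and you have simply written out that analogous argument in detail, analyzing the lifting problems for $n=0,1,2$ and matching them to the clauses of the (marked) idempotent-completeness definition.
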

\begin{proof}
	The proof is similar to the proof of \cref{rigerogergergre}.\end{proof}

\begin{kor}
	The fibrant objects in  the Bousfield localization $L_{\{u,v,w\}}\preAdd^{(+)}$ are exactly the idempotent-complete small (marked) additive categories.
\end{kor}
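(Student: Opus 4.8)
The plan is to assemble three ingredients that are already available: the description of fibrant objects of a left Bousfield localization from \cite[Prop.~A.3.7.3]{htt}, the analysis of $\{v,w\}$-locality carried out in the proof of \cref{rigerogergergre}, and the lemma just proved which identifies $u$-locality with idempotent completeness among (marked) additive categories. Since $\cC=\preAdd^{(+)}$ is a left-proper (\cref{fiowefwefewfewf}), combinatorial simplicial model category (\cref{vgioeoerberebg}) in which every object is cofibrant and fibrant, and $u$, $v$, $w$ are cofibrations (each is injective on objects), the localization $L_{\{u,v,w\}}\cC$ exists; and by \cite[Prop.~A.3.7.3(3)]{htt}, exactly as spelled out in the proof of \cref{rigerogergergre}, its fibrant objects are precisely those $\bA$ in $\cC$ for which all three maps $\Map_{\cC}(u,\bA)$, $\Map_{\cC}(v,\bA)$ and $\Map_{\cC}(w,\bA)$ are trivial Kan fibrations.

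The first step is then to quote the computation from \cref{rigerogergergre}: for an object $\bA$ of $\cC$, the combined condition that $\Map_{\cC}(v,\bA)$ and $\Map_{\cC}(w,\bA)$ be trivial Kan fibrations is equivalent to $\bA$ being a (marked) additive category. Hence the fibrant objects of $L_{\{u,v,w\}}\cC$ are exactly the (marked) additive categories $\bA$ that are in addition local with respect to $u$. The second step is to apply the preceding lemma, which states that a (marked) additive category is $u$-local precisely when it is idempotent complete. Chaining these two equivalences gives the assertion, and this is essentially the whole argument.

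The only point that deserves attention — and it is the same bookkeeping as in \cref{rigerogergergre}, not a genuine obstacle — is that $u$-locality has to be examined \emph{in the presence} of additivity: one tests $\Map_{\cC}(u,\bA)$ against the boundary inclusions $\partial\Delta^{n}\to\Delta^{n}$, observes that the mapping spaces are nerves of groupoids and hence $2$-coskeletal so that only $n=0,1,2$ can obstruct a lift, and finds that the case $n=0$ forces every idempotent on an object to split as a prescribed sum decomposition (this is where idempotent completeness, and in the marked case the markedness of the induced summand isomorphisms, enters), while the lifts for $n=1,2$ exist and are unique by the universal property of the sums that additivity already guarantees. Since this analysis is subsumed in the preceding lemma, the proof of the corollary reduces to assembling the two locality criteria, and I anticipate no difficulty.
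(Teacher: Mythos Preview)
Your proposal is correct and follows exactly the approach the paper intends: the corollary is stated without proof in the paper, being an immediate consequence of combining \cref{rigerogergergre} (which identifies $\{v,w\}$-locality with (marked) additivity) with the preceding lemma (which, for a (marked) additive category, identifies $u$-locality with idempotent completeness), via the general description of fibrant objects in a left Bousfield localization from \cite[Prop.~A.3.7.3]{htt}. Your explicit bookkeeping of the $2$-coskeletality and the $n=0,1,2$ cases is precisely the content of the lemma's proof sketch (``similar to the proof of \cref{rigerogergergre}''), so nothing is missing.
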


 {We consider the equivalences
$W_{\Add^{(+),\idem }}$ in the Bousfield localization $L_{\{u,v,w\}}\preAdd^{(+)}$ and the $\infty$-category
\[\Add^{(+),\idem }_{\infty}:= \preAdd^{(+)}[W_{\Add^{(+),\idem }}^{-1}]\ .\]
Using \eqref{g4r5g4f34rf3f4f},} we have an equivalence \begin{equation}\label{ivfou89f43fvfeferferferf11}
\cNerve({\Add^{(+),\idem}})\simeq\Add^{(+),\idem }_{\infty} \ .
\end{equation}
We obtain the analog of \cref{fiowefwefwfwf}.
\begin{kor}
	\begin{enumerate}
		\item The $\infty$-category $\Add^{(+),\idem }_{\infty}$ is presentable.
		\item We have an adjunction
		\[ L_{\idem} \colon \Add^{(+)}_{\infty}\leftrightarrows \Add^{(+),\idem }_{\infty} \colon\cF_{\idem} \]
		where $ \cF_{\idem}$ is the inclusion and $L_{\idem}$ is the idempotent completion functor.
		\item We have an adjunction
		\[ L_{\oplus,\idem } \colon \preAdd_{\infty}^{(+)}\leftrightarrows  \Add^{(+),\idem }_{\infty} \colon \cF_{\oplus,\idem } \]
		where $\cF_{\oplus,\idem }\simeq \cF_{\oplus}\circ  \cF_{\idem}$ and $L_{\oplus,\idem }\simeq L_{\idem}\circ L_{\oplus}$.
	\end{enumerate}
\end{kor}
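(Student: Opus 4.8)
The plan is to follow the proof of \cref{fiowefwefwfwf} line by line. Recall from the preceding corollary that $L_{\{u,v,w\}}\preAdd^{(+)}$ is a left Bousfield localization of the combinatorial, left-proper, simplicial model category $\preAdd^{(+)}$ whose fibrant objects are exactly the idempotent-complete (marked) additive categories, and that all objects of $\preAdd^{(+)}$ are cofibrant-fibrant. For the first assertion, \cite[Prop.~A.3.7.3]{htt} shows that $L_{\{u,v,w\}}\preAdd^{(+)}$ is again combinatorial and simplicial with all objects cofibrant, so by \cite[Prop.~1.3.4.22]{HA} the $\infty$-category presented by its cofibrant-fibrant objects is presentable; by \eqref{ivfou89f43fvfeferferferf11} this $\infty$-category is $\Add^{(+),\idem}_{\infty}$.

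For the second assertion, I would apply the remark preceding \eqref{ivfou89f43fvfeferferferf}, with $\bC:=\preAdd^{(+)}$ and $\cS:=\{u,v,w\}$, to identify $\Add^{(+),\idem}_{\infty}$ with the localization of $\preAdd^{(+)}_{\infty}$ at the images of $u$, $v$, $w$ in the sense of \cite[Def.~5.2.7.2]{htt}. By transitivity of $\infty$-categorical localization and $\Add^{(+)}_{\infty}=\preAdd^{(+)}_{\infty}[W_{\Add^{(+)}}^{-1}]$ (which inverts $v$ and $w$), this is the localization of $\Add^{(+)}_{\infty}$ at $u$; here one uses, exactly as for \eqref{ivfou89f43fvfeferferferf1}, that a weak equivalence between idempotent-complete (marked) additive categories is already a weak equivalence of (marked) additive categories, so that the two relevant classes of weak equivalences agree. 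Since $\Add^{(+)}_{\infty}$ is presentable, localization at a set of morphisms is reflective by \cite[Prop.~5.5.4.15]{htt}, so the inclusion of the full subcategory of $u$-local objects admits a left adjoint. By the lemma characterizing idempotent completeness as the lifting property against $u$ (proved just like \cref{rigerogergergre}, by analysing the lifting problem against $\partial\Delta^{n}\to\Delta^{n}$ for $n\le 2$, the mapping spaces being $2$-coskeletal), this full subcategory is precisely $\Add^{(+),\idem}_{\infty}$ and its inclusion is $\cF_{\idem}$; we then define $L_{\idem}$ to be the left adjoint.

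The third assertion is then formal: composing the adjunction $L_{\idem}\dashv\cF_{\idem}$ with \eqref{vevnkvnrekovefveveervv}, and using that a composite of right adjoints is right adjoint to the composite of the corresponding left adjoints in reversed order, one finds that $\cF_{\oplus,\idem}:=\cF_{\oplus}\circ\cF_{\idem}$ has left adjoint $L_{\idem}\circ L_{\oplus}=:L_{\oplus,\idem}$, and uniqueness of adjoints yields the stated equivalences. I expect no genuine obstacle; the only point meriting separate treatment is justifying the name ``idempotent completion functor'' for $L_{\idem}$, i.e.\ identifying, for a discrete (marked) additive category $\bA$, the object $L_{\idem}(\ell_{\Add}(\bA))$ with the image in $\Add^{(+),\idem}_{\infty}$ of the Karoubi envelope of $\bA$ — an explicit mapping-space computation entirely parallel to the proof of \cref{gueiurgrgerger}, which could be recorded as a remark.
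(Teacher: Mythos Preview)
Your proposal is correct and follows exactly the approach the paper intends: the paper states this corollary without proof, merely noting that it is ``the analog of \cref{fiowefwefwfwf}'', and your argument spells out precisely those analogous steps (presentability from the combinatorial simplicial model structure on the Bousfield localization, and the adjunctions from the reflective-localization picture). Your additional remarks on factoring the localization through $\Add^{(+)}_{\infty}$ and on identifying $L_{\idem}$ with the Karoubi envelope are correct elaborations that the paper does not record explicitly.
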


\begin{prop}\label{vgirejgoiergergergergregergergerg}The morphism of pre-additive  categories $\bR\to \Mod^{\fg ,\proj}(R)$ induces an
	equivalence 
	$L_{\oplus,\idem } (\ell_{\preAdd}(\bR))\simeq \ell_{\Add^{\idem}}(\Mod^{\fg ,\proj}(R))$.
\end{prop}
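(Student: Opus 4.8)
The plan is to reduce the statement to the assertion that $\Mod^{\fg,\proj}(R)$ is the idempotent completion of $\Mod^{\fg,\free}(R)$, and then to prove this by a mapping-space computation of the same flavour as in the proofs of \cref{rigerogergergre} and \cref{gueiurgrgerger}.

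First, recall that $L_{\oplus,\idem}\simeq L_{\idem}\circ L_{\oplus}$. Combined with \cref{gueiurgrgerger} this gives $L_{\oplus,\idem}(\ell_{\preAdd}(\bR))\simeq L_{\idem}(\ell_{\Add}(\Mod^{\fg,\free}(R)))$. The morphism $\bR\to\Mod^{\fg,\proj}(R)$ factors as $\bR\to\Mod^{\fg,\free}(R)\xrightarrow{\iota}\Mod^{\fg,\proj}(R)$, where $\iota$ is the inclusion of finitely generated free modules into finitely generated projective ones, and under the identification above the asserted equivalence becomes the claim that $\iota$ induces an equivalence
\[ L_{\idem}(\ell_{\Add}(\Mod^{\fg,\free}(R)))\xrightarrow{\ \simeq\ }\ell_{\Add^{\idem}}(\Mod^{\fg,\proj}(R))\ . \]
Since every direct summand of a finitely generated projective module is again finitely generated projective, $\Mod^{\fg,\proj}(R)$ is idempotent complete, so the right-hand side makes sense, and the composite $\bR\to\Mod^{\fg,\proj}(R)$ of the two displayed maps is the morphism in the statement.

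To prove the displayed equivalence I would use the adjunction $L_{\idem}\dashv\cF_{\idem}$, the fact that $\Add^{\idem}$ is a full (simplicial) subcategory of $\Add$, and \eqref{ivfou89f43fvfeferferferf1}, which identifies mapping spaces in $\Add_{\infty}$ with nerves of groupoids of invertible natural transformations in $\Add$. As in the proofs of \cref{rigerogergergre} and \cref{gueiurgrgerger}, this reduces the claim to showing that for every idempotent complete additive category $\bB$ the restriction map $\Map_{\Add}(\Mod^{\fg,\proj}(R),\bB)\to\Map_{\Add}(\Mod^{\fg,\free}(R),\bB)$ along $\iota$ is a trivial Kan fibration. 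Both sides are nerves of groupoids, hence $2$-coskeletal, so it suffices to solve the lifting problems against $\partial\Delta^{n}\to\Delta^{n}$ for $n\le 2$. For $n=0$: given a morphism of additive categories $F\colon\Mod^{\fg,\free}(R)\to\bB$, a finitely generated projective $P$ is a summand of some $R^{k}$ via an idempotent $e=e^{2}\in\End_{\Mod^{\fg,\free}(R)}(R^{k})$, and one extends $F$ by declaring $\tilde F(P)$ to be an image of $F(e)$ in $\bB$ (which exists since $\bB$ is idempotent complete), extending to morphisms through the universal property of the image factorization. For $n=1$: an isomorphism between two such functors extends uniquely to an isomorphism between the extensions; for $n=2$: the required filler expressing compatibility of these extensions with composition is unique by the universal properties of images and of sums.

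The main obstacle is the case $n=0$: one has to check that $\tilde F$ is well defined up to canonical isomorphism (independently of the presentation of $P$ as a summand and of the chosen image), that it is a genuine functor compatible with the $\Ab$-enrichment and preserving sums, and that the construction is functorial in $F$ — that is, one reproves the universal property of the Karoubi envelope in the $\Ab$-enriched setting. I expect this to go through by routine diagram chases with idempotents, entirely parallel to the verifications in \cref{rigerogergergre}. Alternatively, one can bypass the two-step reduction and argue directly in the style of \cref{gueiurgrgerger}, testing precomposition with $\bR\to\Mod^{\fg,\proj}(R)$ against an idempotent complete additive $\bB$: the $n=0$ step then sends $*$ to an image of the idempotent cutting out $P$ inside $B^{\oplus k}$, where $B$ is the image of $*$, combining the ``extend by sums'' argument of \cref{gueiurgrgerger} with the ``extend by images'' argument above.
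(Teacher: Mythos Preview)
Your proposal is correct, and your alternative at the end is precisely what the paper does: its proof consists of the single sentence ``The proof is similar to \cref{gueiurgrgerger}'', meaning one tests precomposition with $\bR\to\Mod^{\fg,\proj}(R)$ against idempotent-complete additive $\bB$ and checks the lifting problems for $n\le 2$ directly.

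Your main route differs in that you factor through $\Mod^{\fg,\free}(R)$, invoking \cref{gueiurgrgerger} and then proving that $\iota\colon\Mod^{\fg,\free}(R)\to\Mod^{\fg,\proj}(R)$ exhibits the target as the idempotent completion. This is a perfectly valid decomposition; in fact the paper deduces exactly this statement as a \emph{corollary} of the proposition, so you have essentially reversed the logical order. The paper's direct approach is marginally shorter (one mapping-space computation instead of two, and no need to worry separately about well-definedness of $\tilde F$ with respect to presentations of $P$), while your approach has the virtue of isolating the Karoubi-envelope universal property as a standalone step. Either way the substance is the same lifting argument in degrees $n\le 2$, and the obstacle you flag in the $n=0$ case is routine in both setups.
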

\begin{proof}
	The proof is similar to \cref{gueiurgrgerger}.
\end{proof}The following is a precise version of the assertion that $\Mod^{\fg ,\proj}(R)$ is the idempotent completion of $\Mod^{\fg ,\free}(R)$.
\begin{kor}The morphism of additive categories $\Mod^{\fg ,\free}(R)\to \Mod^{\fg ,\proj}(R)$ induces
	an equivalence
	\[ \ell_{\Add^{\idem}}(\Mod^{\fg ,\proj}(R))\simeq L_{\idem}( \ell_{\Add}(\Mod^{\fg ,\free}(R)))\ .\]
\end{kor}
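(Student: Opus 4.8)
The plan is to derive the statement formally from \cref{gueiurgrgerger} and \cref{vgirejgoiergergergergregergergerg} together with the factorization $L_{\oplus,\idem}\simeq L_{\idem}\circ L_{\oplus}$ recorded in the corollary above. Concretely, \cref{vgirejgoiergergergergregergergerg} gives $\ell_{\Add^{\idem}}(\Mod^{\fg,\proj}(R))\simeq L_{\oplus,\idem}(\ell_{\preAdd}(\bR))$; the factorization rewrites the right-hand side as $L_{\idem}\bigl(L_{\oplus}(\ell_{\preAdd}(\bR))\bigr)$; and \cref{gueiurgrgerger} identifies $L_{\oplus}(\ell_{\preAdd}(\bR))$ with $\ell_{\Add}(\Mod^{\fg,\free}(R))$. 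Assembling these three equivalences yields
\[ \ell_{\Add^{\idem}}(\Mod^{\fg,\proj}(R))\;\simeq\;L_{\idem}\bigl(\ell_{\Add}(\Mod^{\fg,\free}(R))\bigr)\ . \]

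It then remains to check that this composite is the equivalence \emph{induced by} the canonical functor $\Mod^{\fg,\free}(R)\to\Mod^{\fg,\proj}(R)$, i.e.\ that under the identifications it becomes the unit $\ell_{\Add}(\Mod^{\fg,\free}(R))\to\cF_{\idem}L_{\idem}\ell_{\Add}(\Mod^{\fg,\free}(R))$ of the adjunction $(L_{\idem},\cF_{\idem})$. For this I would use the commuting triangle of canonical functors
\[ \bR\longrightarrow\Mod^{\fg,\free}(R)\longrightarrow\Mod^{\fg,\proj}(R) \]
and the fact that the proofs of \cref{gueiurgrgerger} and \cref{vgirejgoiergergergergregergergerg} identify its two legs (after applying $\ell_{\preAdd}$) with the units of $(L_{\oplus},\cF_{\oplus})$ and $(L_{\oplus,\idem},\cF_{\oplus,\idem})$ respectively. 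Since $\cF_{\oplus,\idem}\simeq\cF_{\oplus}\circ\cF_{\idem}$ and $L_{\oplus,\idem}\simeq L_{\idem}\circ L_{\oplus}$ as adjunctions, the unit of $(L_{\oplus,\idem},\cF_{\oplus,\idem})$ is the composite of the unit of $(L_{\oplus},\cF_{\oplus})$ with $\cF_{\oplus}$ applied to the unit of $(L_{\idem},\cF_{\idem})$; comparing with the triangle above shows that the second leg is carried to the unit of $(L_{\idem},\cF_{\idem})$, which is exactly the claim.

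Alternatively --- and this has the advantage of building the ``induced by'' clause in automatically --- one can argue directly in the style of \cref{rigerogergergre} and \cref{gueiurgrgerger}. Using \eqref{ivfou89f43fvfeferferferf1}, \eqref{ivfou89f43fvfeferferferf11} and the universal property of $L_{\idem}$, the claim reduces to showing that for every idempotent-complete additive category $\bB$ the restriction map along the canonical functor,
\[ \Map_{\preAdd}\bigl(\Mod^{\fg,\proj}(R),\bB\bigr)\longrightarrow\Map_{\preAdd}\bigl(\Mod^{\fg,\free}(R),\bB\bigr)\ , \]
is a trivial Kan fibration; since the mapping spaces are $2$-coskeletal it is enough to solve the lifting problems against $\partial\Delta^{n}\to\Delta^{n}$ for $n\le 2$, each of which follows from the fact that every finitely generated projective $R$-module is a retract of a finitely generated free one together with the uniqueness of idempotent splittings. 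In both approaches the only real work is the bookkeeping --- the compatibility diagram in the first, the low-dimensional lifting problems in the second --- and I expect that to be the main (though not a serious) obstacle; all the substance is already contained in \cref{gueiurgrgerger} and \cref{vgirejgoiergergergergregergergerg}.
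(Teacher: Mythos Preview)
Your first approach is exactly what the paper has in mind: the corollary is stated without proof, as an immediate consequence of combining \cref{vgirejgoiergergergergregergergerg}, the factorization $L_{\oplus,\idem}\simeq L_{\idem}\circ L_{\oplus}$, and \cref{gueiurgrgerger}. Your additional care in verifying the ``induced by'' clause, and your alternative direct argument via lifting problems, both go beyond what the paper spells out.
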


\subsection{\texorpdfstring{$G$}{G}-coinvariants}\label{erbgkioergergergegreg}

Let $G$ be a group. In this subsection we want to calculate explicitly the homotopy $G$-orbits of pre-additive categories with trivial $G$-action. The precise formulation of the result is \cref{weoijoijvu9bewewfewfwef}. We then discuss applications to group rings.
 
By $BG$ we denote the groupoid with one object $*$ and group of automorphisms $G$. The functor category $\Fun(BG,\bC)$ is the category of objects in $\bC$ with $G$-action and equivariant morphisms. 
The underlying object or morphism of an object or morphism in $\Fun(BG,\bC)$ is the evaluation of the functor or morphism at $*$.
 
If $\bI$ is a category and $F \colon \bC\to \bD$ is a functor, then we will use the notation
 \begin{equation}\label{f34iuh4fiuhrif894r43r34r3434r34r34r31}
 F_{\bI} \colon \Fun(\bI,\bC)\to \Fun(\bI,\bD)\end{equation} for the functor defined by post-composition with $F$.

 We consider a (marked) preadditive category $\bA$. It gives rise to a constant functor
 $\underline{\bA}$ in $\Fun(BG,\preAdd^{(+)})$ and hence to an object
 $\ell_{\preAdd^{(+)},BG}(\underline{\bA})$ in $\Fun(BG,\preAdd^{(+)}_{\infty})$.
  
 Since the $\infty$-category $\preAdd^{(+)}_{\infty}$ is presentable, it is cocomplete and the colimit in the following theorem exists. Recall the functor $- \sharp -$ from \cref{defn_functor_sharp}.
 
\begin{theorem}\label{weoijoijvu9bewewfewfwef}
 We have a natural equivalence
\[ \colim_{ BG}\ell_{\preAdd^{(+)},BG}(\underline{\bA})\simeq \ell_{\preAdd^{(+)}} (\bA\sharp BG)\ .\]
\end{theorem}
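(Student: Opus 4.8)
The plan is to present the $\infty$-categorical colimit by a homotopy colimit in the model category $\preAdd^{(+)}$, to compute that homotopy colimit by the two-sided bar construction, and to evaluate the latter using the simplicial tensoring $-\sharp-$.

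First I would reduce to a model-categorical statement. Equip $\Fun(BG,\preAdd^{(+)})$ with its projective model structure, which exists because $\preAdd^{(+)}$ is combinatorial; its underlying $\infty$-category is $\Fun(BG,\preAdd^{(+)}_{\infty})$ and the left derived functor of $\colim_{BG}$ computes the $\infty$-categorical colimit (see \cite[\S1.3.4]{HA}, together with \eqref{ivfou89f43fvfeferferferf}). Since $\bA$ is cofibrant (\cref{fiowefwefewfewf}) but the constant diagram $\underline{\bA}$ need not be projectively cofibrant, I would model the homotopy colimit by the realisation of the two-sided bar construction $B_{\bullet}(\ast,BG,\underline{\bA})$, whose $n$-th term is $\coprod_{G^{\times n}}\bA\cong \bA\sharp N(BG)_{n}$. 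This simplicial object is Reedy cofibrant in the projective model structure: its latching maps are obtained, termwise, by applying the left adjoint $\bA\sharp-$ to the inclusions of degenerate simplices $L_{n}N(BG)\hookrightarrow N(BG)_{n}$, and are therefore cofibrations by \cref{reiofweiofweewf}. Hence $\colim_{BG}\ell_{\preAdd^{(+)},BG}(\underline{\bA})\simeq\ell_{\preAdd^{(+)}}\bigl(|B_{\bullet}(\ast,BG,\underline{\bA})|\bigr)$.

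Next I would evaluate the realisation. The bifunctor $-\sharp-$ is a left Quillen bifunctor by the pushout--product axiom (M7) verified in \cref{foifjoewfefwefwef}; in particular $\bA\sharp-\colon\sSet\to\preAdd^{(+)}$ is a composite of left adjoints and so preserves all colimits, and $(\bA\sharp K)\sharp L\cong\bA\sharp(K\times L)$ because the fundamental groupoid functor $\Pi$ (for finite products) and the linearisation $\Lin_{\Z}$ (from the cartesian to the $\otimes$-structure) are monoidal. Writing the realisation as the coend $\int^{[n]\in\Delta}(\bA\sharp N(BG)_{n})\sharp\Delta^{n}$ and commuting $\bA\sharp-$ past it yields
\[ |B_{\bullet}(\ast,BG,\underline{\bA})|\;\cong\;\bA\sharp\Bigl(\int^{[n]\in\Delta}N(BG)_{n}\times\Delta^{n}\Bigr)\;\cong\;\bA\sharp N(BG)\ , \]
the last isomorphism being the presentation of $N(BG)$ as the colimit of its simplices. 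Since the path category of the nerve of a category is that category, and the fundamental groupoid of the nerve of a groupoid is the groupoid, one has $\Pi(N(BG))\cong BG$, hence $\bA\sharp N(BG)=\bA\sharp\Pi(N(BG))\cong\bA\sharp BG$. Combining the two paragraphs gives the asserted equivalence; naturality in $\bA$ is clear because every step is functorial, and unwinding the identifications shows that the equivalence is the map induced by the structure morphism $\bA\to\bA\sharp BG$.

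The step I expect to be the main obstacle is the passage between the $\infty$-categorical colimit and the explicit model $|B_{\bullet}(\ast,BG,\underline{\bA})|$: one has to verify Reedy cofibrancy of the bar construction inside the (marked) pre-additive model structures, keep track of the marking throughout (this is where \cref{reiofweiofweewf}, the pushout--product axiom, and the left properness noted in \cref{fiowefwefewfewf} are used), and check that the resulting comparison morphism is the canonical one. An essentially equivalent but more conceptual route avoids the bar construction: since $\preAdd^{(+)}$ is a combinatorial simplicial model category, $-\sharp-$ derives to the canonical tensoring of the presentable $\infty$-category $\preAdd^{(+)}_{\infty}$ over spaces, so that $\colim_{BG}\ell_{\preAdd^{(+)},BG}(\underline{\bA})\simeq\ell_{\preAdd^{(+)}}(\bA)\otimes |BG|$; as the classifying space $|BG|$ is presented by the simplicial set $N(BG)$ and $\bA$ is cofibrant, this equals $\ell_{\preAdd^{(+)}}(\bA\sharp N(BG))\simeq\ell_{\preAdd^{(+)}}(\bA\sharp BG)$, the only real point being that the derived tensoring coincides with the canonical one.
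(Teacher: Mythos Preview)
Your argument is correct, but it takes a different route from the paper's. The paper builds an explicit projective cofibrant replacement of $\underline{\bA}$ using the \emph{transport groupoid} $\tilde G$ (the codiscrete groupoid on the set $G$, with $G$ acting by left translation): one sets $L(\bD):=\bD\sharp\tilde G$ with $G$ acting diagonally, checks directly that $L(\bD)\to\bD$ is a weak equivalence and that $L(\bD)$ is projectively cofibrant via a short lifting argument exploiting the free $G$-action on the objects of $\tilde G$, and then computes
\[
 \colim_{BG}(\underline{\bA}\sharp\tilde G)\;\cong\;\bA\sharp(\colim_{BG}\tilde G)\;\cong\;\bA\sharp BG
\]
using only that $\bA\sharp-\colon\Groupoids\to\preAdd^{(+)}$ is a left adjoint and the elementary groupoid isomorphism $\colim_{BG}\tilde G\cong BG$. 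No bar construction, Reedy model structure, or coend manipulation appears.

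What each approach buys: the paper's argument is shorter and entirely self-contained within the groupoid tensoring already set up in \cref{sec:marked}; it also makes the naturality and the comparison map completely transparent. Your bar-construction argument is the standard Bousfield--Kan formula and would generalise verbatim to any small index category, not just $BG$; in the present case it is really the same computation in disguise, since $\tilde G$ is the groupoid model of $EG$ and $\underline{\bA}\sharp\tilde G$ is precisely the (thickened) bar resolution. One small wording issue: ``Reedy cofibrant in the projective model structure'' should read ``Reedy cofibrant in the Reedy model structure on simplicial objects in $\preAdd^{(+)}$''; this is what ensures that geometric realisation computes the homotopy colimit.
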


\begin{rem}
Note that the order of taking the colimit and the localization is relevant.
Indeed,  we have  $\colim_{BG} \underline{\bA}\cong \bA$ and therefore
$\ell_{\preAdd^{(+)}}(\colim_{BG} \underline{\bA})\simeq \ell_{\preAdd^{(+)}}(\bA)$.
\end{rem}

\begin{rem}\label{ergerg34t3t3434t3}
Note that the unmarked version of \cref{weoijoijvu9bewewfewfwef}
can be deduced from the marked version using the functor $\ma$ introduced in \eqref{f3rfkj34nfkjf3f3f3f43f}. 
\end{rem}

In order to avoid case distinctions, we will formulate the details of the proof in the marked case. The unmarked case can be shown similarly, or alternatively deduced formally from the marked case as noted in \cref{ergerg34t3t3434t3}.

Since $\preAdd^{+}$ has a  cofibrantly generated model category structure, the projective model category structure on $\Fun(BG,\preAdd^{+})$ exists \cite[Thm.~11.6.1]{MR1944041}.
For every cofibrant replacement functor $l \colon L \to \id_{\Fun(BG,\preAdd^{+})}$ for this projective model category structure we have an equivalence  \begin{equation}\label{fwreoiuh24iufhi3rfwrfwefwef}
\ell_{\preAdd^{+}}\circ \colim_{BG}\circ L\simeq \colim_{BG}\circ \ell_{\preAdd^{+},BG}
\end{equation}
of functors from $\Fun(BG,\preAdd^{+})$ to $\preAdd^{+}_{\infty}$, 
see e.g. \cite[Prop.~15.3]{bunke} for an argument.

We derive the formula asserted in \cref{weoijoijvu9bewewfewfwef} by considering a particular choice of a 
  cofibrant replacement functor. 
\begin{ddd}\label{rwgioorgfgwergwf}Let $\tilde G$  in $\Fun(BG,\Groupoids)$ be the groupoid with $G$-action   given as follows:
\begin{enumerate}
\item The objects of $\tilde G$ are the elements of $G$.
\item For every pair of of objects $g,g^{\prime}$ there is a unique morphism $g\to g^{\prime}$.
\item The group $G$ acts on $\tilde G$ by left-multiplication.
\end{enumerate}
The $G$-groupoid $\tilde G$ is often called the transport groupoid of $G$.\end{ddd}

We now define the functor
 \[ L:=-\sharp \tilde G \colon \Fun(BG,\preAdd^{+}) \to \Fun(BG,\preAdd^{+}) \]
 (more precisely  $L(\bD)$ is the $G$-object  obtained from the $G\times G$-object $\bD\sharp \tilde G$ in $\preAdd^{+}$ by restriction of the action along the diagonal $G\to G\times G$).
 We have a natural transformation 
 $L\to \id$ induced by  the morphism of $G$-groupoids $\tilde G\to \underline{\Delta^{0}_{\Cat}}$, where we use the canonical isomorphism  $\bD\sharp \underline{\Delta^{0}_{\Cat}}\cong \bD$. 

\begin{lem}\label{lem:cofibrant-replacement}
The functor $L$ together with the transformation $L\to \id$ 
is a cofibrant replacement functor for the projective model category structure on $\Fun(BG,\preAdd^{+})$.  \end{lem}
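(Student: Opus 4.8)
I need to show that $L = -\sharp\tilde G$ together with the transformation $L\to\id$ is a cofibrant replacement functor for the projective model structure on $\Fun(BG,\preAdd^+)$. This means two things: (i) for every $\bD$ in $\Fun(BG,\preAdd^+)$, the object $L(\bD)$ is projectively cofibrant, and (ii) the natural map $L(\bD)\to\bD$ is a weak equivalence in $\Fun(BG,\preAdd^+)$, i.e., pointwise (hence, since $BG$ has one object, after evaluation at $*$) a weak equivalence in $\preAdd^+$.

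\textbf{Step 1: the weak equivalence.} Evaluating at $*$, the map $L(\bD)\to\bD$ is induced by the unique map of groupoids $\tilde G\to\Delta^0_{\Cat}$. Since $\tilde G$ is a contractible groupoid (a unique morphism between any two objects), this is an equivalence of groupoids, and $Q$ applied to it is a weak equivalence in $\preAdd^+$ (it admits an inverse up to marked isomorphism, because $Q$ is built from $\Lin_\Z$ and $\ma$, which send equivalences of groupoids to weak equivalences). Then I invoke \cref{efwiuhfuihfewihfweiufhewfewfewfewfwefwf}, or rather its evident variant: $-\sharp K$ for a groupoid-level equivalence $K$ preserves and creates weak equivalences on the $\preAdd^+$ factor — the argument is the one already used in \cref{reiofweiofweewf}, producing the inverse functor $\bD\sharp j$ and the marked natural isomorphisms $\bD\sharp u$, $\bD\sharp v$ from an inverse equivalence $j$ of $\tilde G\to\Delta^0_{\Cat}$. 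So $L(\bD)\to\bD$ is a weak equivalence, $G$-equivariantly (everything is natural in the $G$-action, which simply acts through the $\tilde G$-variable).

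\textbf{Step 2: cofibrancy.} This is the main point. An object of $\Fun(BG,\preAdd^+)$ is projectively cofibrant iff it has the left lifting property against all projective trivial fibrations, i.e., against all maps $\bE\to\bF$ in $\Fun(BG,\preAdd^+)$ that are pointwise trivial fibrations. Equivalently (by the standard recognition for projective model structures over a cofibrantly generated base, \cite[Thm.~11.6.1]{MR1944041}), it suffices to exhibit $L(\bD)$ as built from ``free'' $G$-cells $G_+\wedge(-)$ — concretely, to show $L(\bD)$ is a retract of a transfinite composite of pushouts of maps of the form $G\cdot i$ for $i$ a generating cofibration of $\preAdd^+$. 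The key structural observation: $\tilde G$ with its left $G$-action is the translation groupoid, and $\tilde G\cong EG$ is a \emph{free} contractible $G$-groupoid — it is the left $G$-set $G$ viewed as a groupoid, so as a $G$-object it is ``$G\times(\text{chaotic groupoid on }G)$ with $G$ permuting'' — more precisely $\tilde G$ is cofibrant in the projective model structure on $\Fun(BG,\Groupoids)$. Since $Q\colon\Groupoids\to\preAdd^+$ is a left adjoint and $-\otimes-$ (resp. $-\times-$) preserves colimits in each variable, the functor $\bD\sharp-$ carries cofibrant $G$-groupoids to projectively cofibrant $G$-objects of $\preAdd^+$ provided $\bD$ itself is cofibrant in $\preAdd^+$ — and in $\preAdd^+$ \emph{every} object is cofibrant (\cref{fiowefwefewfewf}). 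So the real content reduces to: $\tilde G$ is projectively cofibrant in $\Fun(BG,\Groupoids)$, and $\bD\sharp-$ sends a free $G$-cell of groupoids to a projective cofibration of $\preAdd^+$-objects. The latter holds because $\bD\sharp(G\cdot K) \cong G\cdot(\bD\sharp K)$ for a plain groupoid $K$ (the $\sharp$-action distributes over the free-$G$-object functor, as $\sharp$ on the non-equivariant factor commutes with the colimit defining $G\cdot(-)$), and $\bD\sharp-$ preserves cofibrations of groupoids by \cref{reiofweiofweewf}. Assembling: write $\tilde G$ as a cell complex of free $G$-groupoid cells, apply $\bD\sharp-$, and use that this functor preserves the relevant colimits and cofibrations.

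\textbf{The main obstacle.} The delicate part is Step 2 — making precise that $\tilde G$, with its left-translation $G$-action, is projectively cofibrant in $\Fun(BG,\Groupoids)$, and that $\bD\sharp-$ transports this cofibrancy into $\Fun(BG,\preAdd^+)$. The cleanest route is probably not to build an explicit cell decomposition but to observe directly that $\Fun(BG,\preAdd^+)$-projective cofibrancy of $L(\bD)$ can be checked against pointwise trivial fibrations $p\colon\bE\to\bF$: a $G$-equivariant lifting problem for $L(\bD)=\bD\sharp\tilde G\to\bF$ against $p$ is, by the adjunction $-\sharp\tilde G\dashv(\tilde G\text{-cotensor})$ of \cref{gioergegergreg}/\eqref{egiherigergregergr1} together with $G$-equivariance, equivalent to a \emph{non-equivariant} lifting problem for $\bD$ against $\Fun^+(\Delta^0,\bE)^G\to\Fun^+(\Delta^0,\bF)^G$-type maps built from $p$ and the contractibility of $\tilde G$; one then solves it using that $\bD$ is cofibrant in $\preAdd^+$ and that $p$ is a trivial fibration (surjective on objects and fully faithful, detecting marked isomorphisms). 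I would present Step 2 via this adjunction-plus-equivariance reduction, since it avoids transfinite cell bookkeeping and leverages exactly the tools — \cref{gioergegergreg}, \cref{reiofweiofweewf}, \cref{hgionhbaiovnioaghiphn}, and universal cofibrancy in $\preAdd^+$ — already available in the excerpt.
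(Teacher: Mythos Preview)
Your Step 1 is correct and matches the paper's argument: the map $\tilde G\to\Delta^0_{\Cat}$ is a (non-equivariant) equivalence of groupoids, and $\bD\sharp-$ carries it to a weak equivalence by the argument of \cref{reiofweiofweewf}; this is exactly what the paper does.

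For Step 2, your intuition that the lifting problem should reduce to a non-equivariant one is right, but both of your proposed routes are more elaborate than needed, and the second is not quite correctly set up. The $\sharp$-cotensor adjunction of \cref{gioergegergreg} is a non-equivariant adjunction; invoking it ``together with $G$-equivariance'' does not automatically produce a clean reduction, and the target map you describe (``$\Fun^+(\Delta^0,\bE)^G\to\Fun^+(\Delta^0,\bF)^G$-type'') is not the relevant one. The cell-complex route would work but requires you to verify that $\tilde G$ is projectively cofibrant in $\Fun(BG,\Groupoids)$ and that $\bD\sharp-$ is left Quillen in a $G$-equivariant sense, neither of which is established in the paper.

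The paper's argument is much more direct and uses only one structural fact about $\tilde G$: the $G$-action on its set of objects is \emph{free and transitive}. Given a projective trivial fibration $f\colon\bA\to\bB$ and an equivariant map $u\colon\bD\sharp\tilde G\to\bB$, one first chooses a non-equivariant section $g\colon\bB\to\bA$ of $f$ (possible because $f$ is surjective on objects by \cref{hgionhbaiovnioaghiphn} and a weak equivalence). Restrict $u$ to $\bD\sharp\{1\}\cong\bD$ and postcompose with $g$ to get a non-equivariant map $\bD\to\bA$. Freeness of the $G$-action on the objects of $\tilde G$ then lets one extend this uniquely to a $G$-equivariant functor $c\colon\bD\sharp\tilde G\to\bA$, and $f\circ c=u$ follows from $f\circ g=\id_\bB$ together with equivariance. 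No cell decomposition, no transported adjunction --- just the elementary observation that equivariant functors out of $\bD\sharp\tilde G$ are determined by their restriction to the fibre over $1\in G$.
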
 
\begin{proof} Since $\Res^{G}_{\{1\}}(\tilde G)\to \Delta^{0}_{\Cat}$ is an (non-equivariant) equivalence of groupoids and  for every  object $\bA$ in  $  \preAdd^{+} $  the functor $\bA\sharp - \colon \Groupoids\to \preAdd^{+}$ preserves  equivalences (see the proof of \cref{reiofweiofweewf}),  the morphism $\bD\sharp \tilde G\to \bD$  is a weak equivalence in the projective model category structure on $\Fun(BG,\preAdd^{+})$ for every $\bD$ in $\Fun(BG,\preAdd^{+})$.

We must show that
$L(\bD)$ is cofibrant. To this end we consider the lifting problem 
\[\xymatrix{\emptyset\ar[r]\ar[d]&\bA\ar[d]^{f} \\ \bD\sharp \tilde  G\ar[r]^{u}\ar@{-->}[ur]^{c}&\bB} \]
where $f$ is a trivial fibration in $\preAdd^{+}$. Since $f$ is surjective on objects
we can find an inverse marked equivalence (possibly non-equivariant) $g \colon \bB\to \bA$ for $f$ such that $f\circ g=\id_{\bB}$. The map
$\bD\sharp \{1\}\xrightarrow{u_{|\bD\sharp \{1\}}} \bB\xrightarrow{g}\bA$ can be uniquely extended to an equivariant morphism $c$ which is the desired lift. \end{proof}

\begin{proof}[Proof of \cref{weoijoijvu9bewewfewfwef}]
 According to \eqref{fwreoiuh24iufhi3rfwrfwefwef} and \cref{lem:cofibrant-replacement}, we must calculate the object
 \[ \colim_{BG} L(\underline{\bA})\cong \colim_{BG} (\underline{\bA}\sharp \tilde G) \]
 for an object $\bA$ of $\preAdd^{+}$.
 To this end, we note that for a fixed marked pre-additive category $\bD$, we have by \eqref{egiherigergregergr1} an adjunction
 \[ \bD\sharp- \colon \Groupoids\leftrightarrows \preAdd^{+} \colon \Fun^{+}_{\preAdd^{+}}(\bD,-)\ .\]
 Since $\bD\sharp-$ is a left-adjoint, it commutes with colimits.
 Consequently, we get \begin{equation}\label{rverv43rgfrrerg}
 \colim_{BG} (\underline{\bA}\sharp \tilde G)\simeq \bA\sharp  \colim_{BG}  \tilde G\ .
\end{equation}
The assertion of \cref{weoijoijvu9bewewfewfwef} now follows from a combination of the relations \eqref{fwrefwfkj2nirkfrwefwfw}, \eqref{rverv43rgfrrerg} and    \eqref{fwreoiuh24iufhi3rfwrfwefwef}.
\end{proof}

Let $R$ be a unital ring.
By $R[G]$ we denote the group ring of $G$ with coefficients in $R$. 
Recall from \cref{rgiuerhgweergergergeg} that we can consider unital rings as pre-additive categories which will be denoted by the corresponding bold-face letters.
\begin{lem}\label{ergioergreger34t3t}
We have an equivalence
\[ \colim_{BG} \ell_{\preAdd^{},BG}(\underline{\bR})\simeq \ell_{\preAdd^{}}(\mathbf{R[G]})\ .\]
\end{lem}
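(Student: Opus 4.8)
The plan is to obtain \cref{ergioergreger34t3t} as an immediate consequence of \cref{weoijoijvu9bewewfewfwef}, applied in the unmarked case $\cC = \preAdd$ (the unmarked version of that theorem being valid, see \cref{ergerg34t3t3434t3}), which yields the equivalence
\[ \colim_{BG} \ell_{\preAdd, BG}(\underline{\bR}) \simeq \ell_{\preAdd}(\bR \sharp BG)\ . \]
It then remains only to identify the pre-additive category $\bR \sharp BG$ with $\mathbf{R[G]}$; in fact I expect these to be isomorphic on the nose, and then applying $\ell_{\preAdd}$ finishes the proof.

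To establish this identification I would unwind the definitions. By \cref{defn_functor_sharp}, in the case $\preAdd$ one has $\bR \sharp BG = \bR \otimes Q(BG)$, where $Q = \Lin_{\Z} \circ i$ is the functor from \eqref{rthoir3terhtrhrth} and $i \colon \Groupoids \to \Cat$ is the inclusion. The category $Q(BG) = \Lin_{\Z}(BG)$ has a single object $*$ with endomorphism ring the free abelian group $\Z[G]$ on $G$, equipped with the multiplication obtained by extending the composition law of $BG$ — i.e.\ the group law of $G$ — bilinearly. From the description of the morphism groups in the tensor product $\otimes$, the category $\bR \otimes \Lin_{\Z}(BG)$ then has the single object $(*,*)$, with endomorphism ring $\Hom_{\bR}(*,*) \otimes_{\Z} \Hom_{\Lin_{\Z}(BG)}(*,*) = R \otimes_{\Z} \Z[G]$, and the composition law of $\otimes$ corresponds under the standard isomorphism $R \otimes_{\Z} \Z[G] \cong R[G]$ to the multiplication of the group ring. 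This exhibits $\bR \sharp BG$ as the one-object pre-additive category associated to $R[G]$, so that $\bR \sharp BG \cong \mathbf{R[G]}$, and the claimed equivalence follows.

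The only point demanding any care — and it is routine bookkeeping rather than a genuine obstacle — is checking that the bilinear multiplication induced on $R \otimes_{\Z} \Z[G]$ by the composition laws of $\bR$, of $\Lin_{\Z}(BG)$ and of the tensor product $\otimes$ indeed coincides with the multiplication of $R[G]$; this comes down to keeping track of the order of composition through the definitions. All remaining assertions are direct applications of the definitions together with \cref{weoijoijvu9bewewfewfwef}.
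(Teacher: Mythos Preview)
Your proposal is correct and follows essentially the same approach as the paper: apply \cref{weoijoijvu9bewewfewfwef} to obtain $\colim_{BG}\ell_{\preAdd,BG}(\underline{\bR})\simeq \ell_{\preAdd}(\bR\sharp BG)$, and then unfold the definition of $\bR\sharp BG$ to identify its endomorphism ring as $R\otimes_{\Z}\Z[G]\cong R[G]$. Your write-up simply spells out the unwinding of $\bR\sharp BG = \bR\otimes \Lin_{\Z}(BG)$ in more detail than the paper does.
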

\begin{proof}
	By \cref{weoijoijvu9bewewfewfwef}, we have an equivalence
	\[ \colim_{ BG} \ell_{\preAdd,BG}(\underline{\bR})\simeq \ell_{\preAdd}(\bR\sharp BG)\ .\]
Unfolding  the definitions (see e.g.~\cref{defn_functor_sharp}) we observe that
$\bR\sharp BG$ has one object, and its ring of endomorphisms  is given by
$R\otimes_{\Z }\Z[G]\cong R[G]$.
\end{proof}

\begin{lem}\label{rierhigregegerg43t34t34t}
We have equivalences
\[ \colim_{BG}\ell_{\preAdd,BG}( \underline{  \Mod^{\fg ,\free}(R)})\simeq \ell_{\preAdd}(\Mod^{\fg ,\free}(R[G])) \]
and
\[ \colim_{BG}\ell_{\preAdd,BG} (\underline{ \Mod^{\fg ,\proj}(R)})\simeq \ell_{\preAdd}(\Mod^{\fg ,\proj}(R[G])) \]
\end{lem}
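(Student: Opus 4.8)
The plan is to derive both equivalences from \cref{weoijoijvu9bewewfewfwef}, applied to the pre-additive categories $\bA := \Mod^{\fg,\free}(R)$ and $\bA := \Mod^{\fg,\proj}(R)$ equipped with the trivial $G$-action: that theorem gives $\colim_{BG}\ell_{\preAdd,BG}(\underline{\bA}) \simeq \ell_{\preAdd}(\bA\sharp BG)$, so it remains to identify $\bA\sharp BG$, up to equivalence of pre-additive categories, with the corresponding category of finitely generated modules over the group ring $R[G]$. Unwinding \cref{defn_functor_sharp} in the pre-additive case, $\bA\sharp BG = \bA\otimes Q(BG)$, and $Q(BG)=\Lin_\Z(BG)$ is the one-object pre-additive category with endomorphism ring the group ring $\Z[G]$; hence $\bA\sharp BG$ has the same objects as $\bA$, morphism groups $\Hom_{\bA\sharp BG}(A,A')=\Hom_\bA(A,A')\otimes_\Z\Z[G]$, and composition $(\varphi\otimes g)\circ(\psi\otimes h)=(\varphi\circ\psi)\otimes(gh)$.

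The comparison functor sends an object $P$ to the induced module $\operatorname{Ind} P := P\otimes_R R[G]$ and acts on morphisms by the canonical map $\Hom_R(P,P')\otimes_\Z\Z[G]\to\Hom_{R[G]}(\operatorname{Ind} P,\operatorname{Ind} P')$. I would check that this map is an isomorphism using the induction--restriction adjunction $\Hom_{R[G]}(\operatorname{Ind} P,N)\cong\Hom_R(P,\Res N)$ together with the identification $\Res(\operatorname{Ind} P')\cong\bigoplus_{g\in G}P'$ of underlying $R$-modules, which reduces the claim to $\Hom_R(P,\bigoplus_{g\in G}P')\cong\bigoplus_{g\in G}\Hom_R(P,P')$; this holds because $P$ is finitely generated, and compatibility with composition and identities is a direct check. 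In the free case the functor restricts to an equivalence $\Mod^{\fg,\free}(R)\sharp BG\xrightarrow{\ \simeq\ }\Mod^{\fg,\free}(R[G])$: it is fully faithful by the above, and essentially surjective since every finitely generated free $R[G]$-module $R[G]^n$ equals $\operatorname{Ind}(R^n)$. Applying $\ell_{\preAdd}$ and \cref{weoijoijvu9bewewfewfwef} yields the first equivalence.

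For the projective case the same functor $\Mod^{\fg,\proj}(R)\sharp BG\to\Mod^{\fg,\proj}(R[G])$ is still fully faithful by the identical Hom-computation (finitely generated projective modules are in particular finitely generated). Essential surjectivity is the step I expect to be the main obstacle: the formation of $\bA\sharp BG$ does not commute with idempotent completion, so a finitely generated projective $R[G]$-module is not visibly of the form $\operatorname{Ind} P$. I would handle this not by insisting on a direct identification of categories, but by running the argument through the left adjoint $L_{\oplus,\idem}$ of \cref{fiowefwefwfwf}: since a left adjoint commutes with $\colim_{BG}$, applying $L_{\oplus,\idem}$ (pointwise, and then to the colimit) to the equivalence $\colim_{BG}\ell_{\preAdd,BG}(\underline{\bR})\simeq\ell_{\preAdd}(\mathbf{R[G]})$ of \cref{ergioergreger34t3t} and invoking \cref{vgirejgoiergergergergregergergerg} for both $R$ and $R[G]$ produces the second equivalence. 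Running the same argument with $L_\oplus$ and \cref{gueiurgrgerger} in place of $L_{\oplus,\idem}$ and \cref{vgirejgoiergergergergregergergerg} reproves the first equivalence and provides a cross-check.
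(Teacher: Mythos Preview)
Your second approach---starting from \cref{ergioergreger34t3t}, commuting the left adjoint $L_{\oplus}$ (resp.\ $L_{\oplus,\idem}$) past $\colim_{BG}$, and invoking \cref{gueiurgrgerger} (resp.\ \cref{vgirejgoiergergergergregergergerg}) on both ends---is exactly the paper's proof. The paper never attempts a direct identification of $\bA\sharp BG$ for $\bA=\Mod^{\fg,?}(R)$; it works entirely through the one-object category $\bR$ and the universal properties of the completion functors.

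Your first approach for the free case, identifying $\Mod^{\fg,\free}(R)\sharp BG \xrightarrow{\simeq} \Mod^{\fg,\free}(R[G])$ via induction, is a correct and genuinely different route. It is more hands-on and, in fact, proves the equivalence as literally stated (a colimit in $\preAdd_\infty$, via \cref{weoijoijvu9bewewfewfwef}), whereas the $L_\oplus$-argument naturally lands in $\Add_\infty$. Your diagnosis of the projective case is also right: the induction functor $\Mod^{\fg,\proj}(R)\sharp BG \to \Mod^{\fg,\proj}(R[G])$ is fully faithful by the same Hom-computation, but essential surjectivity genuinely fails in general (not every finitely generated projective $R[G]$-module is induced), so the detour through $L_{\oplus,\idem}$ is not merely convenient but necessary. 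The paper's uniform strategy avoids having to confront this asymmetry between the free and projective cases.
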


\begin{proof}
By \cref{gueiurgrgerger},
we have an equivalence
\[ \colim_{BG} \ell_{\Add,BG}(\underline{\Mod^{\fg ,\free}(R)})\simeq\colim_{BG}  L_{ \oplus,BG}( \ell_{\preAdd,BG}(\underline{\bR }))\ .\]
Since $L_{\oplus}$ is a left-adjoint, it commutes with colimits.
Therefore,
\[ \colim_{BG}  L_{ \oplus,BG}( \ell_{\preAdd,BG}(\underline{\bR }))\simeq L_{\oplus}(\colim_{ BG}\ell_{\preAdd,BG}(\underline{\bR})) \ .\]
By \cref{ergioergreger34t3t}, we have the equivalence
\[ L_{\oplus}(\colim_{BG}\ell_{\preAdd,BG}(\underline{\bR})) \simeq L_{\oplus} (\ell_{\preAdd}(\mathbf{R[G])}) \ .\]
Finally, by \cref{gueiurgrgerger} again  
\[ L_{\oplus} (\ell_{\preAdd}(\mathbf{R[G])})\simeq \ell_{\preAdd}(\Mod^{\fg ,\free}(R[G]))\ .\]
The second equivalence is shown similarly, using \cref{vgirejgoiergergergergregergergerg} and $L_{\oplus,\idem }$ instead of \cref{gueiurgrgerger}  and $L_{\oplus}$.
\end{proof}

\begin{ex}
A unital ring $R$ gives rise to two canonical marked preadditive categories $\mi(\bR)$ (only the identity is marked) and $\ma(\bR)$ (all units are marked). 
Then
\[ \colim_{BG}\ell_{\preAdd^{+},BG}(\mi(\bR))\simeq \ell_{\preAdd^{+}}(\mathbf{R[G]}^{\can_{G}})\ ,\]
where the marked isomorphisms in $\mathbf{R[G]}^{\can_{G}}$ are the elements of $G$ (canonically considered as elements in $R[G]$).
In contrast,
\[ \colim_{BG}\ell_{\preAdd^{+},BG}(\ma(\bR))=\ell_{\preAdd^{+}}(\mathbf{R[G]}^{\can})\ ,\]
where  the marked isomorphisms in   $\mathbf{R[G]^{\can}}$  are the canonical units in $R[G]$, i.e., the elements of the form $ug$ for a unit $u$ of $R$ and an element $g$ of $G$. 
 \end{ex}

Let us now use the general machine in order to construct interesting functors on the   orbit category 
 $G\Orb$   of $G$. The group $G$ with the left action is an object of $G\Orb$.
Since the right action of $G$ on itself implements an isomorphism $\End_{G\Orb}(G)\cong G$, we get a fully faithful functor
\begin{equation}
\label{eq_BG_GOrb}
i \colon BG\to G\Orb\ .
\end{equation} 
If $\bC$ is a presentable $\infty$-category, then we have an adjunction
\begin{equation}
\label{eq_BG_GOrb_Kan}
i_{!} \colon \Fun(BG,\bC)\leftrightarrows \Fun(G\Orb,\bC) \colon i^{*}\ .
\end{equation}
The functor $i_{!}$ is the left Kan extension functor along $i$.
We now consider the composition
\begin{align}
\preAdd & \quad \xrightarrow{\quad\mathclap{\underline{(-)}}\quad} \quad \Fun(BG,\preAdd)\notag\\
& \quad \xrightarrow{\quad\mathclap{\ell_{\preAdd,BG}}\quad} \quad \Fun(BG,\preAdd_{\infty})\notag\\
& \quad \xrightarrow{\quad\mathclap{i_{!}}\quad} \quad \Fun(G\Orb,\preAdd_{\infty})\label{revelrnjjkrnfkjervervverveverv}
\end{align}
which we denote by $J^G$.
 
We are interested in the calculation of the value  $J^{G}(\bA)(G/H)$ for
a subgroup $H$.

Let $\bA$ be a pre-additive category.
\begin{lem}\label{vfuiheiwufewfewfefewfewf}
We have an equivalence
\[ J^{G}(\bA)(G/H)\simeq \ell_{\preAdd}( \bA\sharp BH)\ .\]
\end{lem}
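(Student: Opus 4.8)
The plan is to unwind the definition of $J^{G}$ and to evaluate the left Kan extension $i_{!}$ pointwise. By the definition of $J^{G}$ in \eqref{revelrnjjkrnfkjervervverveverv} we have $J^{G}(\bA)(G/H)=\bigl(i_{!}\,\ell_{\preAdd,BG}(\underline{\bA})\bigr)(G/H)$. Since $\preAdd_{\infty}$ is presentable by \cref{fiowefwefwfwf}, hence cocomplete, the pointwise formula for left Kan extensions \cite{htt} gives
\[ J^{G}(\bA)(G/H)\simeq \colim_{i\downarrow (G/H)}\bigl(\ell_{\preAdd,BG}(\underline{\bA})\circ \pi\bigr)\ , \]
where $\pi\colon i\downarrow(G/H)\to BG$ denotes the projection from the comma category.

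First I would identify the comma category $i\downarrow(G/H)$. Its objects are the morphisms $G\to G/H$ in $G\Orb$, i.e.\ the $G$-equivariant maps of $G$-sets, and such a map is determined by the image $aH$ of the neutral element; so the objects are in bijection with $G/H$. A morphism from the object classified by $aH$ to the one classified by $bH$ is an element $g\in G=\End_{G\Orb}(G)$ with $gbH=aH$. In particular every morphism is invertible, the category is a connected groupoid, and the automorphism group of the object classified by the trivial coset is $H$. Hence the inclusion of $BH$ as the full subcategory on this object is an equivalence $BH\xrightarrow{\ \simeq\ }i\downarrow(G/H)$, and its composition with $\pi$ is the functor $BH\to BG$ induced by the inclusion $H\le G$.

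Since a categorical equivalence of indexing categories does not change the colimit, we obtain
\[ J^{G}(\bA)(G/H)\simeq \colim_{BH}\bigl(\ell_{\preAdd,BG}(\underline{\bA})|_{BH}\bigr)\ . \]
The restriction of the constant diagram $\underline{\bA}$ on $BG$ along $BH\to BG$ is the constant diagram $\underline{\bA}$ on $BH$, and restriction commutes with $\ell_{\preAdd,(-)}$ since the latter is merely post-composition with $\ell_{\preAdd}$; therefore $\ell_{\preAdd,BG}(\underline{\bA})|_{BH}\simeq \ell_{\preAdd,BH}(\underline{\bA})$. Applying \cref{weoijoijvu9bewewfewfwef} to the group $H$ then yields $\colim_{BH}\ell_{\preAdd,BH}(\underline{\bA})\simeq \ell_{\preAdd}(\bA\sharp BH)$, which is the assertion.

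The main obstacle is the computation in the second paragraph: one has to keep track of the left and right $G$-actions when describing $\End_{G\Orb}(G)$ and the morphisms of the comma category, and verify that under the equivalence $BH\simeq i\downarrow(G/H)$ the projection $\pi$ becomes the standard inclusion $BH\hookrightarrow BG$ and not some twisted variant. Once this identification is in place, everything else is formal and the statement is reduced to \cref{weoijoijvu9bewewfewfwef}.
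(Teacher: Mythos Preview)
Your proposal is correct and follows essentially the same route as the paper: both use the pointwise formula for $i_{!}$, identify the slice $i/(G/H)$ with $BH$, observe that the diagram becomes the constant $\underline{\bA}$ on $BH$, and then invoke \cref{weoijoijvu9bewewfewfwef}. The only cosmetic difference is that the paper obtains the equivalence $BH\simeq i/(G/H)$ by restricting the standard equivalence $H\Orb\xrightarrow{\simeq} G\Orb/(G/H)$ induced by $S\mapsto(G\times_{H}S\to G/H)$, whereas you compute the objects and morphisms of the comma category directly.
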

\begin{proof}
The functor $S\mapsto (G\times_{H}S\to G/H)$ induces an equivalence of categories
$H\Orb\xrightarrow{\simeq} G\Orb/(G/H)$
which restricts to an equivalence
\begin{equation}\label{welrkerioerrfwfwefewf}
BH\simeq i/(G/H)\ ,
\end{equation}
where $i/(G/H)$ denotes the slice of $i \colon BG\to G\Orb$ over $G/H$.
Using the pointwise formula for the left Kan extension functor $i_{!}$ at the equivalence marked by $!$ we get
\begin{eqnarray*}
J^{G}(\bA)(G/H)&\simeq&i_{!}(\ell_{\preAdd,BG}(\underline{\bA}))(G/H)\\&\stackrel{!}{\simeq}&
\colim_{(i(*)\to G/H)\in i/(G/H)}  \ell_{\preAdd,BG}(\underline{\bA})(*)\\
&\stackrel{!!}{\simeq}&\colim_{BH} \ell_{\preAdd,BH}(\underline{\bA}) \\
&\stackrel{\cref{weoijoijvu9bewewfewfwef}}{\simeq} &\ell_{\preAdd}(\bA\sharp BH)\ ,
\end{eqnarray*}
where at $!!$ we use \eqref{welrkerioerrfwfwefewf} and
that the argument of the colimit is a constant functor.
\end{proof}

The case $\bA:=\bR$ for a ring $R$ leads to a functor
\[ J^{G}(\bR) \colon G\Orb \to \preAdd_{\infty} \] whose value at
$G/H$ is given by $J^{G}(\bR)(G/H)\simeq \ell_{\preAdd}(\mathbf{R[H]})$.
If we postcompose by $L_{\oplus}$ and use \cref{gueiurgrgerger}, then we get a functor
\[ L_{\oplus,G\Orb}\circ 
J^{G}(\bR) \colon G\Orb \to \Add_{\infty} \]
with values $L_{\oplus,G\Orb}\circ 
J^{G}(\bR)(G/H)\simeq \ell_{\Add}(\Mod^{\fg ,\free}R[H])$.
The composition
\[ K_{G\Orb}\circ L_{\oplus,G\Orb}\circ 
J^{G}(\bR) \colon G\Orb\to \Sp \]
therefore has the same values as the functor
representing the equivariant $K$-homology with $R$-coefficients constructed by \cite{davis_lueck}.

\subsection{\texorpdfstring{$G$}{G}-invariants}\label{gijeriogjeroigergregeg}

 Let $G$ be a group.
In this section we calculate the homotopy $G$-invariants of marked pre-additive categories with $G$-action.
The precise formulation is \cref{rgier9oger}.  

  Let 
  $\bA$ be an object of $\Fun(BG,\preAdd^{(+)})$, i.e., a (marked) pre-additive category with $G$-action.
  \begin{ddd}\label{ddd_hat_A_G}
  We define a (marked) pre-additive category $\hat \bA^{G}$ as follows:
  \begin{enumerate}
		\item The objects of $\hat \bA^{G}$ are pairs $(A,\rho)$ of  an  object  $A$ of $\bA$ 
		and a  collection    
		$\rho:=(\rho(g))_{g\in G}$, where $\rho(g) \colon A\to g(A)$ is a (marked) isomorphism in $\bA$ and the equality
		\[g(\rho(h)) \circ \rho(g)=\rho(hg)\]
		holds true
		for all pairs $g,h$ in $G$.
		\item\label{guerioggergerg} The morphisms $(A,\rho)\to (A^{\prime},\rho^{\prime})$ in $\hat \bA^{G}$ are morphisms $a \colon A\to A^{\prime}$ in $\bA$ such that the equality $g(a)\circ \rho(g)=\rho^{\prime}(g)\circ a$ holds true for all $g$ in $G$.
		\item The enrichment of $\hat \bA^{G}$ over abelian groups  is inherited from the enrichment of $\bA$.
		\item (in the marked case) The marked isomorphisms in $\hat \bA^{G}$ are those morphisms which are marked isomorphisms in $\bA$.\qedhere
	\end{enumerate}
\end{ddd}

\begin{ex}\label{giurhgergeg34gergerge}
If $\bA$ is an object of $\preAdd^{(+)}$, then  we will shorten the notation and write
$\widehat{\bA}^{G}$  for $\widehat{\underline{\bA}}^{G}$, where $\underline{\bA}$ is $\bA$ with the trivial $G$-action.

In this case
$\widehat{\bA}^{G}$ is the category of objects of $\bA$ with an action of $G$ by (marked) isomorphisms, and equivariant morphisms. In the marked case, the marked isomorphisms in $\widehat{\bA}^{G}$ are those which are marked in $\bA$.
\end{ex}

Recall  the notation \eqref{f34iuh4fiuhrif894r43r34r3434r34r34r31}   
\begin{theorem}\label{rgier9oger}
We have an equivalence
\[ \lim_{BG}\ell_{\preAdd^{(+)},BG}(\bA)\simeq \ell_{\preAdd^{(+)}}(\hat \bA^{G})\ .\]
\end{theorem}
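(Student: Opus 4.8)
The plan is to dualize the proof of \cref{weoijoijvu9bewewfewfwef}. As there, I would only spell out the marked case and write $\cC:=\preAdd$, working in $\cC^{+}=\preAdd^{+}$; the unmarked statement follows along the same lines, or from the marked one via $\ma$ as in \cref{ergerg34t3t3434t3}. Since $\cC^{+}$ is a combinatorial simplicial model category in which every object is cofibrant and fibrant, the injective model category structure on $\Fun(BG,\cC^{+})$ exists, and for any fibrant replacement functor $R$ for this injective structure one has an equivalence
\[ \lim_{BG}\circ\,\ell_{\cC^{+},BG}\ \simeq\ \ell_{\cC^{+}}\circ\lim_{BG}\circ\,R \]
of functors $\Fun(BG,\cC^{+})\to\cC^{+}_{\infty}$; this is dual to \eqref{fwreoiuh24iufhi3rfwrfwefwef}, now using that $\Delta\colon\cC^{+}\to\Fun(BG,\cC^{+})$ is left Quillen for the injective structure (cf.\ \cite{bunke}). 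So it remains to exhibit a convenient $R$ and to compute $\lim_{BG}R(\bA)$.

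The candidate is the cotensor with the transport groupoid: with $\tilde G\in\Fun(BG,\Groupoids)$ as in \cref{rwgioorgfgwergwf}, I would set
\[ R(\bD):=\Fun^{+}_{\cC^{+}}(Q(\tilde G),\bD)\ , \]
where $R(\bD)$ carries the $G$-action obtained by combining the action on $\bD$ with the one contragredient to the action on $\tilde G$ (exactly as for the functor $-\sharp\tilde G$ in the proof of \cref{weoijoijvu9bewewfewfwef}). The canonical $G$-equivariant morphism $\tilde G\to\underline{\Delta^{0}_{\Cat}}$ is an equivalence of groupoids, so cotensoring it with $\bD$ gives a morphism $\bD\to R(\bD)$ whose underlying morphism in $\cC^{+}$ is a weak equivalence, hence a weak equivalence in the injective structure; here one uses that, for fixed $\bD$, the functor $\Fun^{+}_{\cC^{+}}(Q(-),\bD)\colon\Groupoids^{op}\to\cC^{+}$ sends equivalences of groupoids to weak equivalences, which is dual to \cref{reiofweiofweewf} (compare \cref{lem:power.fib}). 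The substantive point is that $R(\bD)$ is fibrant in the injective model structure on $\Fun(BG,\cC^{+})$. For this, given an injective trivial cofibration $i\colon\bA\to\bB$ and a $G$-equivariant morphism $\bA\to R(\bD)$, transposing along the equivariant tensor--cotensor adjunction $-\sharp\tilde G\dashv R$ (obtained from \eqref{egiherigergregergr1} exactly as in \cref{weoijoijvu9bewewfewfwef}) turns the lifting problem into extending an equivariant morphism $\bA\sharp\tilde G\to\bD$ along $i\sharp\tilde G$. But $i\sharp\tilde G$ is a trivial cofibration in the \emph{projective} model structure on $\Fun(BG,\cC^{+})$: it is a projective weak equivalence by \cref{efwiuhfuihfewihfweiufhewfewfewfewfwefwf}, and it is a projective cofibration because $-\sharp\tilde G$ sends injective cofibrations to projective ones, the $G$-action being free on the $\tilde G$-coordinate (this is the mechanism by which $\bD\sharp\tilde G$ is seen to be projectively cofibrant in \cref{lem:cofibrant-replacement}). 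Since $\bD\to\ast$ is a projective fibration (every object of $\cC^{+}$ being fibrant), the extension --- hence, transposing back, the desired lift $\bB\to R(\bD)$ --- exists.

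It then remains to identify $\lim_{BG}R(\bD)$, which is the subcategory $R(\bD)^{G}$ of $G$-fixed objects and $G$-fixed morphisms, with $\widehat{\bD}^{G}$ as in \cref{ddd_hat_A_G}. An object of $R(\bD)^{G}$ is a $G$-equivariant functor $Q(\tilde G)\to\bD$; since the objects of $\tilde G$ are the elements of $G$, with a unique morphism between any two and $G$ acting by translation, such a functor is the same datum as its value $A$ at the unit together with the images $\rho(g)\colon A\to g(A)$ of the (unique) morphisms out of the unit, and the constraints imposed by composition in $\tilde G$ and by equivariance are precisely the cocycle relation of \cref{ddd_hat_A_G}; likewise a morphism of $R(\bD)^{G}$ is a $G$-equivariant natural transformation, which unwinds exactly to a morphism $A\to A^{\prime}$ of $\bD$ satisfying \cref{ddd_hat_A_G}.\ref{guerioggergerg}, with the abelian group enrichment and the markings inherited as prescribed in \cref{ddd_hat_A_G}. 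Thus $R(\bD)^{G}\cong\widehat{\bD}^{G}$, and taking $\bD:=\bA$ yields the claimed equivalence. The main obstacle I foresee is the verification that $R(\bD)$ is fibrant in the injective model structure; the remaining steps are routine dualizations of constructions already developed for \cref{weoijoijvu9bewewfewfwef}.
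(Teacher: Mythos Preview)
Your proposal is correct and follows essentially the same route as the paper: both use the cotensor $R(\bD)=\Fun^{+}_{\preAdd^{+}}(Q(\tilde G),\bD)$ as an injective fibrant replacement, then identify $\lim_{BG}R(\bA)$ with $\hat\bA^{G}$ exactly as you describe. The only difference is in how fibrancy of $R(\bD)$ is established: after transposing, the paper constructs the lift by hand, choosing a non-equivariant retraction $d$ of the trivial cofibration and writing down an explicit $G$-equivariant $\tilde d\colon\bD\sharp\tilde G\to\bC\sharp\tilde G$, whereas you argue more structurally that $i\sharp\tilde G$ is a projective trivial cofibration and invoke projective fibrancy of $\bD$. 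Your claim that $-\sharp\tilde G$ sends injective cofibrations to projective ones is correct but goes slightly beyond what \cref{lem:cofibrant-replacement} literally proves (that lemma only treats $\emptyset\to\bD\sharp\tilde G$); the relative version follows by the same freeness argument, and filling in that detail is all that is missing.
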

\ 
\begin{rem}\label{geroigergergerg}
If $\bA$ is a pre-additive category with $G$-action, then
the unmarked version of \cref{rgier9oger} can be obtained from the marked versions by
\begin{eqnarray*} \lim_{BG}\ell_{\preAdd^{(+)},BG}(\bA)&\simeq& \cF_{+}(\ma(\lim_{BG}\ell_{\preAdd^{(+)},BG}\bA)))\\&\simeq& 
 \cF_{+}( \lim_{BG}\ell_{\preAdd^{(+)},BG}(\ma_{BG}(\bA)))
\\&\simeq& \ell_{\preAdd^{(+)}}(\cF_{+}(\widehat{\ma_{BG}(\bA)}^{G}))\end{eqnarray*}
using that $\ma$ (as a right-adjoint, see \eqref{f3rfkj34nfkjf3f3f3f43f}) preserves limits.
Note that
\[ \cF_{+}(\widehat{\ma_{BG}(\bA)}^{G})= \hat \bA^{G}\ ,\]
where on the left-hand side we use \cref{ddd_hat_A_G} in the marked case, and on the right-hand side we use it in the unmarked case. 
 \end{rem}

\begin{rem}
The order of taking the limit $\lim_{BG}$ and the localization  $\ell_{...}$ matters.
For example, consider the additive category $\Mod(\Z)$ with   the trivial $G$-action.
Then
\[ \lim_{BG} \Mod(\Z) \cong\Mod(\Z)\ .\]
On the other hand, $\widehat{\Mod(\Z)}^{G}$ is the category of representations of $G$ on $\Z$-modules.  If $G$ is non-trivial, then it is not equivalent to $\Mod(\Z)$. \end{rem}

For simplicity (and in view of \cref{geroigergergerg}), we   formulate the proof in the marked case, only.
Since the category $\preAdd^{+}$ has a combinatorial model category structure the injective model category structure in $\Fun(BG,\preAdd^{+})$ exists. The proof of this fact involves  Smith's theorem, see e.g. \cite[Thm.~1.7]{beke},  \cite[Sec.~A.2.6 ]{htt}. A textbook reference of the fact stated  precisely in the form we need is  \cite[Prop.~A.2.8.2]{htt}.  

For every fibrant replacement functor $r \colon \id\to R$ in the injective model category structure on
$\Fun(BG,\preAdd^{+})$ we have an equivalence
\begin{equation}\label{eq:fibrant-replacement}
\ell_{\preAdd^{+}}\circ \lim_{ BG}\circ R\simeq \lim_{BG}\circ \ell_{\preAdd^{+},BG}
\end{equation}
of functors from $\Fun(BG,\preAdd^{+})$ to $\preAdd^{+}_{\infty}$ (see e.g. \cite[Prop. 13.5]{bunke} for an argument).
In the following we use the notation  introduced in \cref{gwiogefwerfwefwefewfwadd} and before \cref{gioergegergreg}. Furthermore, we consider the $G$-groupoid $\tilde G$ defined in \cref{rwgioorgfgwergwf}.
We define the functor
 \begin{equation}\label{feoirufeoirfjerfref}
R:=\Fun^{+}_{\preAdd^{+}}(Q(\tilde G),-) \colon \Fun(BG,\preAdd^{+})\to \Fun(BG,\preAdd^{+})
\end{equation} 
together with the natural transformation  $r \colon \id\to R$ induced by $\tilde G\to \Delta^{0}_{\Cat}$ using the canonical isomorphism $\Fun^{+}_{\preAdd^{+}}(Q(\Delta^{0}_{\Cat}),-)\cong \id$.
\begin{lem}\label{wfeioweo9ffwefwf}
	The functor \eqref{feoirufeoirfjerfref} together with the natural transformation  $r$ 
	is a fibrant replacement functor.
\end{lem}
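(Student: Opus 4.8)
The plan is to verify separately that $r_{\bD}\colon\bD\to R(\bD)$ is a weak equivalence and that $R(\bD)$ is fibrant, both in the injective model structure on $\Fun(BG,\preAdd^{+})$.

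For the weak equivalence I would use that injective weak equivalences are exactly the morphisms whose underlying morphism (the restriction along $\{1\}\to G$) is a weak equivalence in $\preAdd^{+}$. Unwinding \eqref{feoirufeoirfjerfref}, the underlying morphism of $r_{\bD}$ is $\Fun^{+}_{\preAdd^{+}}(Q(p),\bD(*))$, where $p\colon\tilde G\to\Delta^{0}_{\Cat}$ is the projection to the terminal groupoid (see \cref{rwgioorgfgwergwf}). Since $p$ is an equivalence of groupoids, $Q(p)$ is a weak equivalence in $\preAdd^{+}$: the functor $Q$ agrees up to natural isomorphism with $\Delta^{0}_{\preAdd^{+}}\sharp-$, which sends equivalences of groupoids to weak equivalences by the argument used in the proof of \cref{reiofweiofweewf}. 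It then remains to observe that $\Fun^{+}_{\preAdd^{+}}(-,\bD(*))$ carries weak equivalences of $\preAdd^{+}$ to weak equivalences: if $g$ is a marked-homotopy inverse of a weak equivalence $f$, then precomposition with $g$ is a marked-homotopy inverse of precomposition with $f$, the witnessing marked natural isomorphisms being obtained by whiskering the ones for $f,g$ (here one uses that morphisms of $\preAdd^{+}$ preserve marked isomorphisms). Hence $r_{\bD}$ is an injective weak equivalence; this part is essentially routine.

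For fibrancy, the main point is that $R$ is a right Quillen functor from the projective to the injective model structure on $\Fun(BG,\preAdd^{+})$. Granting this: every object of $\preAdd^{+}$ is fibrant by \cref{fiowefwefewfewf}, and projective fibrations are detected on underlying objects, so every object of $\Fun(BG,\preAdd^{+})$ is projectively fibrant; since right Quillen functors preserve fibrant objects, $R(\bD)$ is injectively fibrant. The left adjoint of $R$ is $L:=-\sharp\tilde G$ with the diagonal $G$-action — the equivariant form of the adjunction in \cref{gioergegergreg} (cf.\ \eqref{egiherigergregergr1}) — so it suffices to check that $L$ carries objectwise (trivial) cofibrations to projective (trivial) cofibrations. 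That $L$ preserves the underlying (trivial) cofibrations, hence also underlying weak equivalences, follows as in \cref{reiofweiofweewf} and \cref{efwiuhfuihfewihfweiufhewfewfewfewfwefwf}. So the heart of the matter is that $j\sharp\tilde G$ has the left lifting property against projective trivial fibrations for every objectwise cofibration $j\colon\bA\to\bB$.

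This last point I expect to be the main obstacle, and I would prove it by exactly the device used in \cref{lem:cofibrant-replacement}. The structural observation is that the $G$-action on the set of objects of $\bB\sharp\tilde G$ is free, with the objects lying over $1\in G$ forming a fundamental domain. Given a projective trivial fibration $f\colon\bX\to\bY$ — equivalently, a $G$-equivariant morphism underlying a trivial fibration in $\preAdd^{+}$, hence surjective on objects and fully faithful by \cref{hgionhbaiovnioaghiphn} — and a square to be filled along $j\sharp\tilde G$ and $f$, one defines the lift on the fundamental domain of objects (using the prescribed values over the image of $j$ and $f$-preimages elsewhere), extends it $G$-equivariantly to all objects, and then lets full faithfulness of $f$ force the lift on morphisms; faithfulness of $f$ together with equivariance of the given data shows that the result is a $G$-equivariant functor preserving marked isomorphisms that solves the lifting problem. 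Suppressing the weak-equivalence bookkeeping gives the statement for cofibrations, so $L$ is left Quillen and the claim follows. In the unmarked case the same argument applies with $\preAdd$ in place of $\preAdd^{+}$.
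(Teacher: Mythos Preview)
Your argument is correct, and the underlying mechanism --- exploiting the adjunction $-\sharp\tilde G \dashv R$ together with the free $G$-action on the object set of $\bB\sharp\tilde G$ --- is the same as in the paper. The organisation, however, differs.

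For the weak-equivalence part, the paper argues via a non-equivariant one-sided inverse: any point $\Delta^{0}_{\Cat}\to\tilde G$ induces a map $p\colon R(\bA)\to\bA$ with $p\circ r=\id$, and $p$ is a weak equivalence by the analogue of \cref{lem:power.fib}; two-out-of-three then gives that $r$ is one. Your route through the functoriality of $\Fun^{+}_{\preAdd^{+}}(-,\bD(*))$ on weak equivalences reaches the same conclusion with only cosmetic differences.

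For fibrancy the paper does not pass through a Quillen adjunction. It directly solves the lifting problem for $R(\bA)\to *$ against an arbitrary injective trivial cofibration $c\colon\bC\to\bD$: after adjunction this becomes an extension problem along $c\sharp\tilde G$, and the paper builds a $G$-equivariant retraction $\tilde d\colon\bD\sharp\tilde G\to\bC\sharp\tilde G$ from a non-equivariant inverse $d$ of $c$ using the free action on objects. Your version instead shows once and for all that $L=-\sharp\tilde G$ is left Quillen from the injective to the projective structure, and then invokes that every object is projectively fibrant. This is a cleaner and more reusable statement (it simultaneously explains \cref{lem:cofibrant-replacement} as the other half of the same Quillen pair), at the cost of checking a slightly more general lifting property: $j\sharp\tilde G$ against arbitrary projective trivial fibrations rather than $c\sharp\tilde G$ against terminal maps. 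Both checks reduce to the same free-action trick, so the extra generality is essentially free.
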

\begin{proof}The morphism $\tilde G\to \Delta^{0}_{\Cat}$ is a non-equivariant equivalence of groupoids. An inverse equivalence is given by any map $\Delta_{\Cat}^{0}\to \tilde G$ classifying some object of $\tilde G$. Since this functor is injective on objects we conclude similarly as for
\cref{lem:power.fib} that the (non-equivariant) morphism $p \colon R(\bA)\to \bA$ it induces   is a weak equivalence. Since $p\circ r=\id$ we conclude that $r \colon \bA\to R(\bA)$ is a  (non-equivariant) weak equivalence, too.  Hence
$r \colon \bA\to R(\bA) $, now considered as a morphism in $\Fun(BG,\preAdd^{+})$, is an  equivalence in the injective model category structure.

	In order to finish the proof we must show that $ R(\bA)$ is fibrant.
	To this end we consider the following square in $\Fun(BG,\preAdd^{+})$, where $c\colon \bC\to \bD$ is a trivial cofibration in $\Fun(BG,\preAdd^{+})$:
	\[\xymatrix{\bC\ar[r]\ar[d]^-{c}&R(\bA)\ar[d]\\\bD\ar[r]\ar@{..>}[ur]&{*}}\]
	We must show the existence of the diagonal lift. 
	
	We use the identification $\Fun_{\preAdd^{+}}^{+}(Q(\tilde G),*)\simeq *$ and  the adjunction of \cref{gioergegergreg}    in order to rewrite the lifting problem as follows.
	\[\xymatrix{\bC\sharp \tilde G\ar[r]^{\phi}\ar[d]& \bA\ar[d]\\\bD\sharp \tilde G\ar[r]\ar@/^1cm/@{..>}[u]^{\tilde d}\ar@{..>}[ur] &{*}}\]
	Since, after forgetting the $G$-action, the  morphism of $c \colon \bC\to \bD$  is a trivial cofibration it is injective on objects.
	We can therefore choose an inverse equivalence $d \colon \bD\to \bC$ (not necessarily $G$-invariant) up to marked isomorphism  {with $d\circ c=\id_\bC$.}
	We can extend the composition
	\[ \bD\xrightarrow{d} \bC\to \bC\times \{1\}\to \bC\sharp \tilde G\] uniquely to a $G$-invariant morphism
	\[ \tilde d \colon \bD\sharp \tilde G\to \bC\sharp \tilde G\]
	by setting
	\[ \tilde d(D,g):=(g(d(g^{-1}D)),g)\ , \quad \tilde d(f \colon D\to D^{\prime},g\to h):=g^{-1} d(g^{-1}f)\sharp (g\to h) \ . \]
	The desired lift can now be obtained as the composition $\phi\circ \tilde d$.
\end{proof}

\begin{proof}[Proof of \cref{rgier9oger}]
By \eqref{eq:fibrant-replacement} and \cref{wfeioweo9ffwefwf}, we have an equivalence
\[ \lim_{BG}\ell_{\preAdd^{+},BG}(\bA)\simeq \ell_{\preAdd^{+}}( \lim_{BG}R(\bA))\ . \]
In order to finish the proof  of  \cref{rgier9oger}, it remains to show that
\[ \lim_{BG}R(\bA)\cong \hat \bA^{G}\ .\]
We define a functor 
\[ \Psi \colon \lim_{BG} R(\bA)=\lim_{BG} \Fun_{\preAdd^{+}}^{+}(Q(\tilde G),\bA)\to \hat \bA^{G} \]
as follows.
\begin{enumerate}
	\item on objects:
	\[ \Psi(\phi):=(\phi(1), (\phi(1 \to g))_{g\in G})\ . \]
	Note that $\phi(g)=g\phi(1)$ by $G$-invariance of $\phi$.
	\item on morphisms:
	\[ \Psi((a_{h})_{h\in \tilde G} \colon \phi\to \psi):=a_{1} \colon \phi(1)\to \psi(1)\ .\]
	One easily checks the relation \ref{guerioggergerg} using that $\phi$ and $\psi$ are $G$-invariant and that $(a_{h})_{h\in \tilde G}$ is a natural transformation.
	\item We observe that $\Psi$ preserves marked isomorphisms.
\end{enumerate}
Finally we check that the
  functor $\Psi$ is an isomorphism of categories. This finishes the proof of \cref{rgier9oger}.
 \end{proof}

 \cref{rgier9oger} implies an analogous statement for additive categories. 

Let $\bA$ be in $\Fun(BG,\preAdd^{(+)})$.
\begin{lem}\label{ergoiejrogregregeg}
	If $\bA$ belongs to the subcategory  $\Fun(BG, \Add^{(+)})$, then $ \hat \bA^{G}$ is a (marked) additive category.
\end{lem}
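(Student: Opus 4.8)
The plan is to verify the three things that, beyond pre-additivity (which $\hat\bA^{G}$ already enjoys by \cref{ddd_hat_A_G}, the enrichment being inherited from $\bA$ and, in the marked case, the marking being the wide subgroupoid of morphisms that are marked in $\bA$), are needed for (marked) additivity: existence of a zero object, existence of sums of pairs of objects, and --- in the marked case --- closure of the marking under these sums. The single structural input I would use is that the hypothesis $\bA\in\Fun(BG,\Add^{(+)})$ forces each $g\in G$ to act through an \emph{additive} automorphism $L_{g}:=\bA(g)$; hence $L_{g}$ sends a zero object to a zero object and a biproduct $A\oplus B$ to a biproduct of $L_{g}(A)$ and $L_{g}(B)$, and in the marked case preserves the marking in both directions.

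For the zero object I would take a zero object $0$ of $\bA$ and, for $g\in G$, the unique isomorphism $\rho(g)\colon 0\to g(0)$ (which is a marked isomorphism in the marked case, isomorphisms between zero objects being forced into $\bA^{+}$ by closure under sums applied to the empty family). The cocycle identity $g(\rho(h))\circ\rho(g)=\rho(hg)$ is then automatic, both sides being the unique isomorphism between the zero objects $0$ and $(hg)(0)$, so $(0,\rho)$ is an object of $\hat\bA^{G}$. To see that it is a zero object one observes that $\Hom_{\hat\bA^{G}}((0,\rho),(A,\sigma))$ is a subgroup of $\Hom_{\bA}(0,A)=0$ containing the zero morphism (which satisfies the equivariance condition \cref{ddd_hat_A_G}.\ref{guerioggergerg} trivially), and symmetrically with the two arguments interchanged.

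For sums, given $(A,\rho)$ and $(B,\sigma)$ I would choose a biproduct $A\oplus B$ in $\bA$ with its structure maps and set $\tau(g):=\rho(g)\oplus\sigma(g)\colon A\oplus B\to g(A)\oplus g(B)\cong g(A\oplus B)$, using that $L_{g}$ preserves biproducts. Functoriality of $\oplus$ on morphisms together with additivity of $L_{g}$ gives $g(\tau(h))\circ\tau(g)=(g(\rho(h))\circ\rho(g))\oplus(g(\sigma(h))\circ\sigma(g))=\rho(hg)\oplus\sigma(hg)=\tau(hg)$, so $(A\oplus B,\tau)$ lies in $\hat\bA^{G}$; a short computation then shows the four structure maps $i_{A},i_{B},p_{A},p_{B}$ are equivariant and exhibit $(A\oplus B,\tau)$ as a biproduct of $(A,\rho)$ and $(B,\sigma)$ in $\hat\bA^{G}$ (the mediating morphism out of any cone $(C,\kappa)$ being automatically equivariant by the universal property in $\bA$). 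In the marked case I would finally note that if $f\colon(A,\rho)\to(A',\rho')$ and $h\colon(B,\sigma)\to(B',\sigma')$ are marked isomorphisms then $f\oplus h$ is marked in $\bA$ by \cref{reiuheriververvec}.\ref{fdblkgjklrgregergergerg}, and the same computation as for $\tau$ shows it is a morphism $(A\oplus B,\tau)\to(A'\oplus B',\tau')$ in $\hat\bA^{G}$, hence a marked isomorphism there.

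I expect the only delicate point to be purely bookkeeping: keeping track of the canonical identifications $L_{g}(A\oplus B)\cong L_{g}(A)\oplus L_{g}(B)$ (and $L_{g}(0)\cong 0$) and checking that the cocycle identities and the equivariance of all structure maps are coherent with respect to them --- this is exactly where "additive functor" is used rather than merely "functor". The remaining verifications (subgroup property of the equivariant hom-sets, the universal property of the biproduct) are routine.
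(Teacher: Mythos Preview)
Your proof is correct and follows the same approach as the paper's (much terser) argument, which simply declares $(M\oplus M',\rho\oplus\rho')$ to be the coproduct and remarks that \cref{reiuheriververvec}.\ref{fdblkgjklrgregergergerg} for $\bA$ both ensures $\rho\oplus\rho'$ is by marked isomorphisms and transfers to $\hat\bA^{G}$; you have simply spelled out the zero object, the cocycle check, and the equivariance of the structure maps that the paper leaves implicit. One small remark: your justification that the unique isomorphism between zero objects is marked (``closure under sums applied to the empty family'') should strictly speaking invoke closure under \emph{binary} sums---take $a=\id_{0}$, $b=\id_{0'}$ and choose $0$ and $0'$ as the two representatives of $0\oplus 0'$---since \cref{reiuheriververvec}.\ref{fdblkgjklrgregergergerg} is stated only for pairs.
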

\begin{proof}
	We must show that $\hat \bA^{G}$ admits finite coproducts. 
	If $(M,\rho)$ and $(M^{\prime},\rho^{\prime})$ are two objects,
	then $(M\oplus M^{\prime},\rho\oplus \rho^{\prime})$ together with the canonical inclusions represents the coproduct of $(M,\rho)$ and $(M^{\prime},\rho^{\prime})$. 
	In the marked case, one furthermore checks by inspection condition \ref{fdblkgjklrgregergergerg} from \cref{reiuheriververvec} for $\bA$ implies this condition for $\hat \bA^{G}$. This condition also implies that $\rho\oplus \rho^{\prime}$ acts by marked isomorphisms as required in the marked case.
\end{proof}
 
  Let  $\bA$ be in $ \Fun(BG, \Add^{+})$.
\begin{kor}
	\label{cor:invariants}
 We have an equivalence
\[ \lim_{BG} \ell_{\Add^{(+)},BG}(\bA)\simeq \ell_{\Add^{(+)}}(\hat \bA^{G})\ .\]
\end{kor}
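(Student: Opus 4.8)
The plan is to deduce the statement from \cref{rgier9oger} by transporting that equivalence along the fully faithful inclusion $\cF_{\oplus}\colon \Add^{(+)}_{\infty}\to \preAdd^{(+)}_{\infty}$ from \cref{fiowefwefwfwf}. Since $\Add^{(+)}_{\infty}$ is presentable by \cref{fiowefwefwfwf}, it is complete, so the limit $\lim_{BG}\ell_{\Add^{(+)},BG}(\bA)$ exists; and since $\cF_{\oplus}$ is a right adjoint by \eqref{vevnkvnrekovefveveervv}, it preserves this limit. Moreover, as observed in the proof of \cref{wefiojewwefewf43t546466}, the $1$-categorical inclusion $\cF_{\oplus,1}\colon \Add^{(+)}\to \preAdd^{(+)}$ preserves weak equivalences and descends to $\cF_{\oplus}$ in such a way that $\cF_{\oplus}\circ \ell_{\Add^{(+)}}\simeq \ell_{\preAdd^{(+)}}\circ \cF_{\oplus,1}$. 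Applying this pointwise over $BG$ gives
\[ \cF_{\oplus}\Bigl(\lim_{BG}\ell_{\Add^{(+)},BG}(\bA)\Bigr)\simeq \lim_{BG}\ell_{\preAdd^{(+)},BG}(\bA)\ , \]
where on the right-hand side $\bA$ is regarded as an object of $\Fun(BG,\preAdd^{(+)})$ via $\cF_{\oplus,1}$.

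Next I would apply \cref{rgier9oger} to the right-hand side to identify it with $\ell_{\preAdd^{(+)}}(\hat\bA^{G})$. By \cref{ergoiejrogregregeg}, the category $\hat\bA^{G}$ is a (marked) additive category, hence lies in the image of $\cF_{\oplus,1}$; using once more the identity $\cF_{\oplus}\circ \ell_{\Add^{(+)}}\simeq \ell_{\preAdd^{(+)}}\circ \cF_{\oplus,1}$ we obtain $\ell_{\preAdd^{(+)}}(\hat\bA^{G})\simeq \cF_{\oplus}\bigl(\ell_{\Add^{(+)}}(\hat\bA^{G})\bigr)$. Combining the two displayed equivalences yields
\[ \cF_{\oplus}\Bigl(\lim_{BG}\ell_{\Add^{(+)},BG}(\bA)\Bigr)\simeq \cF_{\oplus}\bigl(\ell_{\Add^{(+)}}(\hat\bA^{G})\bigr)\ , \]
and since $\cF_{\oplus}$ is the inclusion of a full subcategory (\cref{fiowefwefwfwf}), hence fully faithful, this descends to the desired equivalence $\lim_{BG}\ell_{\Add^{(+)},BG}(\bA)\simeq \ell_{\Add^{(+)}}(\hat\bA^{G})$.

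There is no genuine obstacle here; the argument is formal once the three ingredients are in place. The only points requiring a little care are that $\cF_{\oplus}$ is simultaneously fully faithful and limit-preserving — both of which are recorded in \cref{fiowefwefwfwf}, where $\cF_{\oplus}$ is exhibited as a right adjoint and as a full inclusion — and that the resulting equivalence is natural in $\bA$, which follows from the naturality already built into \cref{rgier9oger} together with the naturality of the comparison $\cF_{\oplus}\circ \ell_{\Add^{(+)}}\simeq \ell_{\preAdd^{(+)}}\circ \cF_{\oplus,1}$.
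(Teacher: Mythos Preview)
Your proof is correct and follows essentially the same approach as the paper: transport along the fully faithful right adjoint $\cF_{\oplus}$, invoke \cref{rgier9oger} on the pre-additive side, use \cref{ergoiejrogregregeg} to ensure $\hat\bA^{G}$ is additive, and then cancel $\cF_{\oplus}$ by full faithfulness. The paper's version is slightly terser but the logical structure is identical.
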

\begin{proof}
The functor $\cF_{\oplus} \colon \Add^{+}_{\infty}\to \preAdd^{+}_{\infty}$ is a right-adjoint and hence preserves limits. 
Using \cref{rgier9oger}, we obtain equivalences
\begin{align*}
\cF_{\oplus}(\lim_{BG}(\ell_{\Add^{(+)},BG} (\bA))) & \simeq \lim_{BG} \ell_{\preAdd^{(+)},BG} (\cF_{\oplus,BG}(\bA))\\
& \simeq \ell_{\preAdd^{(+)}}(\widehat{\cF_{\oplus,BG}( \bA)}^{G})\\
& \simeq \cF_{\oplus}( \ell_{\Add^{(+)}}(\hat \bA^{G}))
\end{align*}
Since $\hat \bA^{G}$ is additive by \cref{ergoiejrogregregeg},
this implies the assertion by omitting $\cF_{\oplus}$ on both sides.
\end{proof}
 
 \begin{ex}\label{rgvoihifowefwfwe}
 	Let $k$ be a complete normed field and let $\Banach$ denote the category of Banach spaces over $k$ and bounded linear maps.
 This category is additive. Note that only the equivalence class of the norm on an object of $\Banach$ is an invariant of the isomorphism class of the object. We use the norms in order to define a marked pre-additive category $\Banach^{+}$ by marking isometries.  
 
 It is first interesting to observe that $\Banach^{+}$  is not a marked additive category. In fact,
 the Condition \ref{reiuheriververvec}.\ref{fdblkgjklrgregergergerg}  is violated since only the equivalence class of the norm on a direct sum is fixed by the norms on the summands.
 
 We can now calculate the $G$-invariants: 
 By \cref{cor:invariants},
 \[ \lim_{BG} \ell_{\Add,BG} (\underline{\Banach})\simeq \ell_{\Add}(\widehat{\Banach}^{G})\ .\]
 By \cref{giurhgergeg34gergerge}, $\widehat{\Banach}^{G}$
 is the category of Banach-spaces over $k$ with an action by $G$ and equivariant bounded linear maps.
 On the other hand, by \cref{rgier9oger}
 \[ \lim_{BG} \ell_{\preAdd^{+},BG} (\underline{\Banach^{+}})\simeq \ell_{\preAdd^{+}}(\widehat{\Banach^{+}}^{G})\ .\]
 By \cref{giurhgergeg34gergerge}, $\widehat{\Banach^{+}}^{G}$
 is the category of Banach-spaces over $k$ with an  \emph{isometric} action by $G$ and equivariant bounded linear maps which are marked if the are isometric.
 Hence $\cF_{+}( \widehat{\Banach^{+}}^{G} )$ is contained properly in
 $ \widehat{\Banach}^{G} $.
 
 This shows that even if we forget the marking at the end, the marking matters when we form limits.
  \end{ex}
 
 \begin{ex}
 	Let $R$ be a unital ring. We consider the additive categories $\Mod^{?}(R)$ and $\Mod^{?}(R)$, where the decoration $?$ is a condition like \emph{free}, \emph{projection},   \emph{finitely generated} or some
 	combination of these.
 	By \cref{rgier9oger} and \cref{giurhgergeg34gergerge}, we get
 	\[ \lim_{BG}\ell_{\Add,BG}(\underline{\Mod^{?}(R)})\simeq \ell_{\Add}( \Fun(BG,\Mod^{?}(R)))\ .\]
 	
 	Note the difference between limits and colimits: By \cref{rierhigregegerg43t34t34t} we have an equivalence
 	\[ \colim_{BG}\ell_{\Add,BG}(\underline{\Mod^{?}(R)})\simeq \ell_{\Add}(  \Mod^{?}(R[G])) \]
 	for $?=(\fg,\proj), (\fg,\free)$. If $G$ is infinite, then the  interpretation of $?$ on the right-hand side leads to different categories (e.g. finitely free generated $R[G]$-modules are in general not finitely generated $R$-modules with a $G$-action).
 \end{ex}
 
\begin{ex}
	\label{ex:bc}
	For the following example we assume familiarity with equivariant coarse homology theories and the example of equivariant coarse algebraic $K$-homology, see for example \cite[Sec.~2, 3 and 8]{equicoarse}. In particular, recall the definition of the functor $\bV_\bA\colon \BC\to \Add$ of $X$-controlled $\bA$-objects for a bornological coarse space $X$ and an additive category $\bA$ from \cite[Sec.~8.2]{equicoarse}. We define the functor
	\[\bV_\bA^+\colon \BC\to \Add^+\]
	by considering $\bV_\bA$ and marking the $\diag(X)$-controlled isomorphisms.
	
	Let $X$ be a $G$-bornological coarse space and let $\bA$ be an additive category with a $G$-action. By functoriality the marked additive category $\bV_\bA^{+}(X)$ then has an action of $G\times G$. We consider $\bV_\bA^{+}(X)$ as a marked additive category with $G$-action by restricting the $G\times G$ action along the diagonal. As in \cref{ddd_hat_A_G} we can form the category $\widehat{\bV_\bA^{+}}^{G}$.
	
	We define the functor
	\[\bV_\bA^{G}:=\cF_{+}\circ  \widehat{\bV_\bA^{+}}^{G}\colon G\BC\to \Add\ .\]
	One checks that this definition agrees with the definition of $\bV_\bA^G$ from \cite[Sec.~8.2]{equicoarse}.
	
	By definition, equivariant coarse algebraic $K$-homology is the functor $K\bA\cX^G:=K\circ\bV_\bA^{G}$.
	
	The functor $\cF_+\colon \Add^+\to\Add$ descents to a functor $\cF_+\colon \Add^+_\infty\to \Add_\infty$.
	Using \cref{cor:invariants}, we now obtain
	\begin{align*}
	K\bA\cX^G&=K\circ\bV_\bA^{G}=K\circ \cF_{+}\circ  \widehat{\bV_\bA^{+}}^{G}\\
	&\simeq K_\infty\circ \ell_{\Add}\circ \cF_{+}\circ  \widehat{\bV_\bA^{+}}^{G}\\
	&\simeq K_\infty\circ \cF_{+}\circ \ell_{\Add^+}\circ  \widehat{\bV_\bA^{+}}^{G}\\
	&\simeq K_\infty\circ \cF_{+}\circ \lim_{BG}\circ  {\ell_{\Add^+,BG}} \circ \bV_\bA^{+}\ .
	\end{align*}
	This shows that equivariant coarse algebraic $K$-homology can be computed from the non-equivariant version by taking $G$-invariants in marked additive categories.
\end{ex}

In addition to the adjunction \eqref{eq_BG_GOrb_Kan}, for a presentable $\infty$-category $\bC$ we also have an adjunction
\begin{equation}\label{vnvkjenvkjenvkevnkervccerf}
i^{op,*} \colon \Fun(G\Orb^{op},\bC)\leftrightarrows \Fun(BG^{op},\bC) \colon i^{op}_{*}\ . 
\end{equation} In analogy to \eqref{revelrnjjkrnfkjervervverveverv} we consider the functor $C^{G}$ defined as the composition
\[
\mathclap{
\Fun(BG^{op},\preAdd^{(+)}) \xrightarrow{\ell_{\preAdd^{(+)},BG}} \Fun(BG^{op},\preAdd^{(+)}_{\infty})\xrightarrow{i^{op}_{*}} \Fun(G\Orb^{op},\preAdd^{(+)}_{\infty})\ .}
\]
For a (marked) pre-additive category with $G$-action
$\bA$ we are interested in the values $C^{G}(\bA)(G/H)$ for subgroups $H$ of $G$.
\begin{lem}\label{guihiuwfewfwefwefwef}
We have an equivalence
\[ C^{G}(\bA)(G/H)\simeq \ell_{\preAdd^{(+)}}(\widehat{ \Res^{G}_{H}(\bA)}^{H})\ . \]
\end{lem}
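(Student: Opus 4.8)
The plan is to compute the value of the right Kan extension $i^{op}_{*}$ pointwise and then reduce to \cref{rgier9oger}. By \eqref{vnvkjenvkjenvkevnkervccerf} the functor $i^{op}_{*}$ is the right Kan extension along $i^{op}\colon BG^{op}\to G\Orb^{op}$, so its value at $G/H$ is given by the pointwise formula
\[ C^{G}(\bA)(G/H)\simeq\lim_{((G/H)\downarrow i^{op})}\ell_{\preAdd^{(+)},BG}(\bA)\circ\pi\ ,\]
where $\pi$ is the projection to $BG^{op}$ from the comma category $((G/H)\downarrow i^{op})$, whose objects are the morphisms $G/H\to i^{op}(*)$ in $G\Orb^{op}$.

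First I would identify this comma category. Unwinding the definitions, a morphism $G/H\to i^{op}(*)$ in $G\Orb^{op}$ is the same datum as a morphism $i(*)=G\to G/H$ in $G\Orb$, and a morphism in $((G/H)\downarrow i^{op})$ is precisely a morphism in the slice $i/(G/H)$ of $i$ from \eqref{eq_BG_GOrb} read in the opposite direction; hence $((G/H)\downarrow i^{op})\cong (i/(G/H))^{op}$. In the proof of \cref{vfuiheiwufewfewfefewfewf} it is shown that $i/(G/H)\simeq BH$, see \eqref{welrkerioerrfwfwefewf}, with the projection to $BG$ corresponding to the functor $B(H\hookrightarrow G)$. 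Taking opposites and using that a groupoid is canonically isomorphic to its opposite, we get $((G/H)\downarrow i^{op})\simeq BH$, and under this equivalence the composite $\ell_{\preAdd^{(+)},BG}(\bA)\circ\pi$ is identified with $\ell_{\preAdd^{(+)},BH}(\Res^{G}_{H}(\bA))$.

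Combining these steps yields $C^{G}(\bA)(G/H)\simeq\lim_{BH}\ell_{\preAdd^{(+)},BH}(\Res^{G}_{H}(\bA))$, and \cref{rgier9oger}, applied with $H$ in place of $G$ and $\Res^{G}_{H}(\bA)$ in place of $\bA$, identifies the right-hand side with $\ell_{\preAdd^{(+)}}(\widehat{\Res^{G}_{H}(\bA)}^{H})$, which is the assertion. The step that requires the most care is the bookkeeping with opposite categories: recognizing $((G/H)\downarrow i^{op})$ as $(i/(G/H))^{op}$, and checking that the restricted diagram really is $\ell_{\preAdd^{(+)},BH}(\Res^{G}_{H}(\bA))$ rather than a twist of it. Once this is settled the argument is a direct dualization of the proof of \cref{vfuiheiwufewfewfefewfewf}.
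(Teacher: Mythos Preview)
Your proposal is correct and follows essentially the same approach as the paper: both compute the right Kan extension pointwise, identify the comma category $(G/H)/i^{op}$ with $BH$ (using the equivalence \eqref{welrkerioerrfwfwefewf} from the proof of \cref{vfuiheiwufewfewfefewfewf} together with the self-duality of groupoids), and then apply \cref{rgier9oger}. The only cosmetic difference is that the paper states the equivalence $BH^{op}\simeq (G/H)/i^{op}$ directly via induction $S\mapsto G\times_H S$, whereas you route it through $(i/(G/H))^{op}$, but this amounts to the same identification.
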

\begin{proof}
The argument is very similar to the proof of \cref{vfuiheiwufewfewfefewfewf}. We use  that the induction $S\mapsto G\times_{H}S$
 induces an equivalence
 \[ BH^{op}\simeq (G/H)/i^{op}\ ,\]
 where $(G/H)/i^{op}$ denotes the slice of $i^{op} \colon BG^{op}\to G\Orb^{op}$ under $G/H$.
 Further employing the point-wise formula for the right-Kan extension  functor $i^{op}_{*}$ and the equivalence $BH\simeq BH^{op}$ given by inversion, we get
 \begin{align*}
 C^{G}(\bA)(G/H)&\simeq i^{op}_{*}(\ell_{\preAdd^{(+)},BG^{op}}(  \bA ))(G/H)\\&\simeq
 \lim_{(G/H \to i^{op}(*))\in    (G/H)/i^{op}} \ell_{\preAdd^{(+)},BG^{op}}( \bA )(*)\\
 &\simeq\lim_{  BH^{op}} \ell_{\preAdd^{(+)},BH^{op}}( \Res^{G}_{H}(\bA)  ) \\
 &\simeq \ell_{\preAdd^{(+)}}(\widehat{ \Res^{G}_{H}(\bA)}^{H})\qedhere
  \end{align*}
 \end{proof}

\bibliographystyle{amsalpha}
\bibliography{addcats}
\end{document}